\documentclass{article}

\usepackage{microtype}
\usepackage{graphicx}
\usepackage{subcaption}
\usepackage{booktabs} 

\usepackage{hyperref}

\usepackage[accepted]{icml2026}

\usepackage{amsmath}
\usepackage{amssymb}
\usepackage{mathtools}
\usepackage{amsthm}

\usepackage{dsfont}
\usepackage{enumitem}

\usepackage[capitalize,noabbrev]{cleveref}

\usepackage{thmtools}

\newtheorem{theorem}{Theorem}[section]
\newtheorem{proposition}[theorem]{Proposition}

\newtheorem{corollary}[theorem]{Corollary}
\theoremstyle{definition}

\theoremstyle{remark}
\newtheorem{remark}[theorem]{Remark}

\newcommand{\supp}{\operatorname{supp}}
\newcommand{\rank}{\operatorname{rank}}

\newcommand{\sR}{\mathbb{R}}
\newcommand{\sP}{\mathbb{P}}

\usepackage[textsize=tiny]{todonotes}

\icmltitlerunning{The Value Function Semi-Algebraic Set in POMDPs}

\begin{document}

\twocolumn[
\icmltitle{
   The Value Function Semi-Algebraic Set in POMDPs
}



  
    \icmlsetsymbol{equal}{*}
    
    \begin{icmlauthorlist}
    \icmlauthor{Ryan A. Anderson}{equal,uclastats}
    \icmlauthor{Guido Montúfar}{equal,uclastats,uclamath,mpi}
    \end{icmlauthorlist}
    
    \icmlaffiliation{uclastats}{Department of Statistics \& Data Science, University of California, Los Angeles, USA}
    \icmlaffiliation{uclamath}{Department of Mathematics, University of California, Los Angeles, USA}
    \icmlaffiliation{mpi}{Max Planck Institute for Mathematics in the Sciences, Leipzig, Germany}
    
    \icmlcorrespondingauthor{Ryan A. Anderson}{raanderson@g.ucla.edu}

  \icmlkeywords{Machine Learning, ICML}

  \vskip 0.3in
]



\printAffiliationsAndNotice{}  

\begin{abstract}
We study the geometry of feasible value functions in infinite-horizon partially observable Markov decision processes (POMDPs) under memoryless stochastic policies. Our main contribution is a characterization of the feasible set of value functions as a semi-algebraic set, defined by explicit polynomial inequalities determined by the transition dynamics, observation kernel, and reward structure of the POMDP.
This result extends prior work for fully observable Markov decision processes, where the feasible set is known to be a polytope, to the substantially more intricate partially observable setting. In contrast to the polyhedral structure arising in MDPs, partial observability induces fundamentally nonlinear constraints, leading to a richer and more complex geometric structure.
Our geometric characterization provides new insight into the landscape of policy optimization in both MDPs and POMDPs, and reveals qualitative phenomena unique to partial observability, including the emergence of isolated local maximizers of the long-term reward and their dependence on the initial state distribution.
\end{abstract}

\section{Introduction}
\label{sec:introduction}

Markov decision processes (MDPs) provide a powerful mathematical framework for sequential decision-making problems and have been successfully deployed across a wide range of domains, including fisheries management \cite{whiteSurveyApplicationsMarkov1993}, autonomous systems and drone warfare \cite{zhangAdaptiveCollisionAvoidance2023}, and game playing at superhuman levels \cite{silverMasteringGameGo2017}. 
Originally introduced in the context of dynamic programming for optimal control \cite{bellmanMarkovianDecisionProcess1957}, MDPs also form the foundation of modern reinforcement learning~\cite{suttonReinforcementLearningIntroduction2018}.

While classical MDPs model uncertainty through stochastic state transitions, many real-world decision problems involve an additional layer of uncertainty arising from partial observability: the agent does not have direct access to the true system state and must act based solely on observations. This setting was formalized in the framework of partially observable Markov decision processes (POMDPs), beginning with the work of \citet{astromOptimalControlMarkov1965}, followed by developments for the finite-horizon case by \citet{smallwoodOptimalControlPartially1973} and for infinite-horizon discounted problems by \citet{sondikOptimalControlPartially1978}. 
Owing to their greater modeling flexibility, POMDPs have found numerous applications \citep{cassandraSurveyPomdpApplications1998}, particularly in robotics \citep{lauriPartiallyObservableMarkov2023}, as well as in domains as diverse as elevator control \citep{critesElevatorGroupControl1998} and the conservation of rare species \citep{chadesPrimerPartiallyObservable2021}. 

A growing body of work on the sample complexity of reinforcement learning in POMDPs has demonstrated that partial observability substantially increases the difficulty of learning optimal policies. One key challenge is that the observation process in a POMDP induces non-Markovian dynamics from the agent's perspective \citep{chenPartiallyObservableRL2022}. For finite-horizon POMDPs, finding an optimal policy is exponential in the horizon length, implying that learning algorithms may require exhaustive exploration of the state space \citep{krishnamurthyPACReinforcementLearning2016}. For discounted-reward, infinite-horizon POMDPs, the problem is undecidable in general \citep{papadimitriouComplexityMarkovDecision1987,madaniUndecidabilityProbabilisticPlanning2003}. 
This stands in sharp contrast to the fully observable case, where learning admits polynomial sample complexity in the horizon length \citep{azarMinimaxRegretBounds2017}. More recent work has identified structural conditions under which learning in POMDPs becomes tractable, such as $\alpha$-weakly revealing POMDPs, which admit polynomial sample complexity guarantees \citep{liuSampleEfficientReinforcementLearning2022}. 

As in the fully observable setting, one may further distinguish between POMDPs that allow policies with memory and those restricted to be memoryless, depending only on the current observation. If the agent is permitted to maintain a belief state (an internal estimate of the latent state constructed from past observations) the POMDP can be reduced to a fully observable MDP \citep{astromOptimalControlMarkov1965,JMLR:v23:20-1165}. 
Although memoryless policies have historically been viewed as inadequate for POMDPs \citep{cohenFutureMemoriesAre2023}, recent work shows that, when external memory is incorporated into the environment, memoryless policies can nonetheless achieve optimality \citep{toroicarteLearningRewardMachines2023}. 

In this work, we focus on the value function, a fundamental object in reinforcement learning that plays a central role in the characterization and computation of optimal policies. In fully observable Markov decision processes, standard algorithms such as value iteration are guaranteed to recover an optimal policy by acting greedily with respect to the current value function \citep{putermanMarkovDecisionProcesses2005}. 

Understanding the geometry of the set of feasible value functions provides insight into the structure and complexity of policy optimization in sequential decision-making problems. While recent work has developed geometric characterizations for MDPs, the corresponding theory for partially observable Markov decision processes has remained largely unexplored. This work aims to fill that gap.

\subsection{Contributions} 
Our results can be summarized as follows: 
\begin{itemize}[leftmargin=*]
    \item We interpret the set of feasible value functions of a (PO)MDP as the solution set of a parametric system, 
    the Bellman equation parametrized by the policy, 
    which allows us to obtain explicit descriptions (Section~\ref{sec:solution-set}). 
    
    \item We first provide an exact description in terms of infinitely many piecewise linear inequalities that serve as a sufficient and necessary condition for a given value function to be feasible (Theorem~\ref{thm:value_functions}). 
    
    \item  We then obtain a description of the feasible set of value functions in terms of only finitely many polynomial equations and inequalities that constitute an explicit semi-algebraic description (Theorem~\ref{thm:zonotope_mdp}).  
    
    \item  Finally, we provide an explicit construction of the feasible set of value functions, identifying a finite set of determinantal conditions on the parameters of the POMDP which form the boundary of the feasible set. (Theorem~\ref{thm:determinant-bellman-equation})
\end{itemize} 

These results apply to both fully observable and partially observable settings, and highlight the role of the transition probabilities, observation probabilities, and instantaneous rewards on the structure of the value functions, and, in turn, the complexity of the policy optimization problem.

\subsection{Related Works}

For fully observable Markov decision processes, \citet{dadashiValueFunctionPolytope2019} showed that the set of value functions is a union of convex polytopes. In particular, this set is closed and bounded, and can be described as a finite union of intersections of halfspaces. 
This geometric characterization has been leveraged to design more efficient algorithms for computing optimal policies, such as geometric policy iteration \citep{wuGeometricPolicyIteration2022a}. 
Moreover, \citet{dadashiValueFunctionPolytope2019} established a \emph{line theorem}, showing that when policies are fixed on all but one state, the corresponding value functions trace out a line segment. This result provides geometric intuition for the effectiveness of policy improvement and value iteration in fully observable MDPs. 

In contrast, policy improvement is in general not feasible in partially observable settings, as distinct states may induce identical observations, introducing intrinsic coupling between decision variables \citep{liFindingOptimalMemoryless2011}. As a result, the geometric structure underlying value functions in POMDPs is fundamentally different and less well understood. 

Related geometric perspectives have been developed for variants of MDPs. For robust MDPs, in which transition kernels are selected adversarially at each round, the set of value functions has been shown to arise as the intersection of conic hypersurfaces \citep{wangGeometryRobustValue2022}. 

We highlight the work of  \citet{mullerGeometryMemorylessStochastic2022}, which showed that for memoryless, discounted, infinite-horizon (PO)MDPs, the value function and related quantities, such as state–action occupancy measures, can be expressed as rational functions of the policy parameters. They derived bounds on the  degree of these rational parametrization in terms of the number of states consistent with a given observation, a quantity they term the degree of observability, and obtained a semi-algebraic description of the feasible set of state-action occupancy measures. These ideas have subsequently been used to develop optimization procedures \citep{mueller2022solving} and have been further studied in the context of state-aggregation \citep{DRESSLER2024102241}. 

Despite these advances, the global geometric structure of the set of feasible value functions in partially observable Markov decision processes has, to our knowledge, remained unexplored. 
    
\section{Preliminaries}
\label{sec:prelims}

\subsection{Markov Decision Processes}  

For a finite set $V$, we denote the probability simplex over $V$ as $\Delta_V=\{p\in\mathbb{R}^V\colon \sum_{v\in V}p(v)=1, p(v)\geq 0\}$. 
For a pair of sets $V, W$, we denote the set of Markov kernels from $W$ to $V$ by $\Delta_V^W =\{ p \in\mathbb{R}^{W\times V}\colon p(\cdot|w) \in\Delta_V, w\in W \}$. 
We sometimes write $p(w;v)$ for $p(v|w)$, where the first argument indexes the rows of the kernel. 

A Markov decision process (MDP) is a tuple $\mathcal{M} = \langle \mathcal{S}, \mathcal A,\mathcal O,\alpha, \beta, r, \gamma \rangle$, with states $\mathcal S$, actions $\mathcal A$, observations $\mathcal O$, transition kernel $\alpha: \mathcal S \times \mathcal A \to \Delta_{\mathcal{S}}$, observation kernel $\beta: \mathcal S \to \Delta_{\mathcal{O}}$, instantaneous reward function $r: \mathcal S \times \mathcal A \to \sR$, and discount factor $\gamma$. 

When $\beta$ is deterministic and injective, then the MDP is \textit{fully observable}; otherwise $\mathcal{M}$ is a partially observable Markov decision process (POMDP). 

Policies $\pi$ are kernels $\pi\colon \mathcal O \to \Delta_{\mathcal A}$, $\pi(a | o) = \sP(\text{taking action } a | \text{observed } o)$. The observation kernel $\beta$ determines the probability of observing $o$ given the agent is in state $s$, $\beta(o | s) = \sP(\text{observing } o | \text{agent in } s)$, while the transition kernel gives the next-state probability conditional upon being in state $s$ and taking action $a$, that is, $\alpha(s'|s, a) = \sP(\text{next state is } s' | \text{in state } s, \text{taking action } a)$. 

For any policy over observations $\pi\in \Delta^{\mathcal O}_{\mathcal A}$ we can define its effective policy over states. Concretely, for a fixed observation kernel $\beta$ we consider the map $\Delta^{\mathcal O}_{\mathcal A} \to \Delta^{\mathcal S}_{\mathcal A}$ given by $\tau = \pi \circ \beta$. 
Note this is a linear map as $\tau(a|s) = \sum_{o\in \mathcal O} \pi(a|o)\beta(o|s)$. 

Given a policy, we further define the policy-weighted transition kernel $P^\pi(s'|s) \in \Delta^{\mathcal S}_{\mathcal S}$ via 
\begin{align}
    P^\pi(s'|s) = \sum_{a \in \mathcal A} (\pi \circ \beta)(a|s)\alpha(s'|s,a)
    \label{eq:transition-kernel}
\end{align} 
as well as a reward vector $r^\pi \in \mathbb{R}^{\mathcal S}$ via 
\begin{align}
    r^\pi(s) = \sum_{a\in \mathcal A} (\pi \circ \beta)(a|s)r(s,a). 
    \label{eq:reward-vector}
\end{align}

For every policy $\pi$, we can calculate the value of the policy in a state $s$, which is the expected discounted sum of rewards obtained from beginning in state $s$ and following the policy: 
\begin{equation*}
    V^{\pi}(s) = \mathbb{E}_{P^{\pi}}\left [\sum_{t=0}^\infty \gamma^t r(s_t, a_t) \middle | s_0 =s  \right ]. 
\end{equation*}

The \textit{value function} of the policy $\pi$ is obtained by taking the value of the policy for each state and assembling them into a vector $V^{\pi} \in \sR^{\mathcal S}$. Note that we can also condition on the initial action to obtain the 
$Q$-value function \cite{suttonReinforcementLearningIntroduction2018}:
\begin{equation*}
    Q^{\pi}(s,a) = \mathbb{E}_{P^{\pi}}\left [\sum_{t=0}^\infty \gamma^t r(s_t, a_t) \middle | s_0 =s, a_0 = a \right ]. 
\end{equation*}

Bellman's optimality equation \cite{bellmanMarkovianDecisionProcess1957} 
relates the value function of a policy 
to the transition kernel and reward vector it defines as follows: 
\begin{align}
    (I - \gamma P^\pi)V^\pi = r^\pi.
    \label{eq:bellman}
\end{align} 

For $\gamma\in(0,1)$,  $I-\gamma P^\pi$ is invertible and the Bellman equation has a unique solution for all $\pi \in \Delta^{\mathcal O}_{\mathcal A}$. 

We regard the Bellman equation (\ref{eq:bellman}) as a \emph{parametric linear system} in the indeterminate $V^\pi\in\mathbb{R}^{\mathcal S}$ whose  coefficients $(I - \gamma P^\pi)\in\mathbb{R}^{\mathcal S \times \mathcal S}$ and $r^\pi\in\mathbb{R}^{\mathcal S}$ are parametrized by the policy $\pi$. 
From (\ref{eq:transition-kernel}) and (\ref{eq:reward-vector}) we see that the parametrization is linear in the entries of the policy. 

We are interested in the geometry of the set of solutions, i.e., the set of value functions, as the policy ranges over the set of policies over observations, that is, $\mathcal{V} = \{V^{\pi} : \pi \in \Delta^{\mathcal O}_{\mathcal A} \}$. 
The set $\mathcal{V}$ is the \emph{solution set} to the Bellman equation as a parametric system with parameters $\pi\in\Delta^{\mathcal O}_{\mathcal A}$. 

Note that for any $\pi \in \Delta^{\mathcal O}_{\mathcal A}$, the value function $V^\pi$ is given by the value function of its effective policy over states, $\tau = \pi \circ \beta\in\Delta^{\mathcal S}_{\mathcal A}$. 
Thus we may equivalently regard the system as being parametrized in terms of the effective policies, which are subject to corresponding constraints. 

The set of effective policies $\tau = \pi \circ \beta \in \Delta^{\mathcal S}_{\mathcal A}$ for fixed observation kernel $\beta$ and arbitrary observation policy $\pi\in\Delta^{\mathcal O}_{\mathcal A}$ is the \textit{effective policy polytope}, denoted $\Delta^{\mathcal A}_{\mathcal S,\beta}$. 
Note that this is indeed a polytope since it is the image of the polytope $\Delta^{\mathcal O}_{\mathcal A}$ under the linear map $\pi \to \pi \circ \beta$. 
We visualize the difference in the effective policy polytope between fully observable and partially observable 2-state MDPs in Figure~\ref{fig:effective_policy_polytope}. 

\begin{figure}
    \centering
    \includegraphics[width=0.9\linewidth]{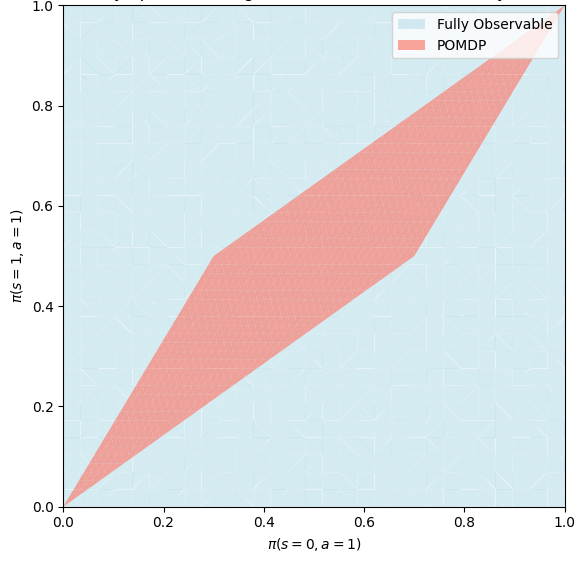}
    \caption{The effective policy polytope for fully observable 2-state MDPs is the blue square, while the red parallelogram is the effective policy polytope for a partially observable 2-state MDPs, for some $\beta$ not equal to the identity matrix.} 
    \label{fig:effective_policy_polytope}
\end{figure}

\subsection{Geometry of Value Functions in MDPs} 
\label{sec:geometry-value-functions-MDPs}

\citet[Theorem~1]{dadashiValueFunctionPolytope2019} show that fixing the behavior of a policy $\pi$ on $k$ states amounts to constraining the value function into an $(\vert \mathcal S\vert -k)$-dimensional affine space. 
This insight leads directly to the \textit{line theorem}, 
which shows that as the policy varies at a single state $s$, the set of attainable value functions forms a one-dimensional line segment. 

For a fully observable MDP, the set of value functions can be characterized using this approach as a union of polytopes. 

\begin{theorem}[\citealp{dadashiValueFunctionPolytope2019}, Theorem 3] 
Let $\pi$ be a policy in a fully observable MDP. Consider states $s_1, \dots, s_k$ and the set $Y^\pi_{s_1, \dots, s_k}$ of all policies that agree with $\pi$ on $s_1, \dots, s_k$. 
Then the image of $Y^\pi_{s_1, \dots, s_k}$ under the map $(I-\gamma P^\pi)^{-1}r^\pi$ is a (non-convex) polytope. Moreover, the image of $Y^\pi_{\emptyset}$ is a (non-convex) polytope. 
\end{theorem}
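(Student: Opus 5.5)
The plan is an induction on the number of states on which the policy is left free, built from two ingredients: the affine-subspace result (Theorem~1 of \citet{dadashiValueFunctionPolytope2019}) and the line theorem. Here ``polytope'' means a finite union of convex polytopes, possibly non-convex, and the claim is that the image of the policy set is such a union.

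\textbf{Step 1 (one free state).} Fix a policy and a state $s$, and vary only $\pi(\cdot|s)$ over $\Delta_{\mathcal A}$. The value function is $V^\pi=(I-\gamma P^\pi)^{-1}r^\pi$. Changing row $s$ of $P^\pi$ and the entry $r^\pi(s)$ is a rank-one update, so Sherman--Morrison gives
\[
V^{\pi'} = V^{\pi} + c(\pi')\,(I-\gamma P^{\pi})^{-1}e_s .
\]
The scalar $c(\pi')$ is a ratio of two functions that are affine in $\pi'(\cdot|s)$, and the denominator is nonzero because the resolvent exists for every policy. Hence $\pi'\mapsto c(\pi')$ is a linear-fractional map on the simplex. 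Its image is therefore an interval whose endpoints are attained at deterministic actions, which is the line theorem. In particular the values reachable by varying one state form a segment along a fixed direction, with endpoints at deterministic choices.

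\textbf{Step 2 (induction on free states).} Next, free the states $s_1,\dots,s_m$ one at a time. Write the set of free policies as a product $\prod_{j}\Delta_{\mathcal A}$ and study the map $f$ from this product to value functions. I would show by induction on $m$ that the image of $f$ is a finite union of convex polytopes whose vertices are value functions of policies that are deterministic on the free states.

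The approach is a boundary argument. By Theorem~1, the image of a face where some free states are fixed lies in a lower-dimensional affine slice, and by the inductive hypothesis it is a finite union of polytopes. I then need the image of the full product to be the region enclosed by these face images. The tool is the fact that $f$ is a homeomorphism onto its image when restricted to directions where it is injective, together with the observation that every point of the image can be pushed, along a segment from Step 1, until some coordinate policy reaches a face of the simplex. Following the segment in both directions reaches two boundary points, so each image point lies on a segment joining two points of lower-dimensional pieces. The image is then a union of the convex hulls of the cells of the lower-dimensional complexes that are joined by such segments, and each of these hulls is a polytope.

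\textbf{Main obstacle.} The hardest part is making this ``union of hulls'' step rigorous. Segments generated at different base points point in different directions (the direction $(I-\gamma P^\pi)^{-1}e_s$ depends on $\pi$), so the image is not literally a Minkowski sum. My first attempt would use the following monotonicity. The sign of each coordinate of $(I-\gamma P^\pi)^{-1}e_s$ is nonnegative, since the matrix is a Neumann series with nonnegative terms. Combined with Theorem~1, this shows that each of the finitely many deterministic-policy affine slices cuts the image into convex cells. Then $\mathcal V$, which is the case $k=0$ with every state free, is covered by the closures of the regions in the hyperplane arrangement generated by the affine spaces of Theorem~1 restricted to deterministic policies.

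To finish, each such region that meets the image must be shown to be entirely contained in it, by connectedness along segments. The image is then a finite union of arrangement cells, which are convex polytopes. This gives the claim both for $Y^\pi_{s_1,\dots,s_k}$ and for $Y^\pi_\emptyset$.
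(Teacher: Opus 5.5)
Your Step~1 is correct; it is the line theorem. There is, however, a genuine gap exactly where you place the obstacle: nothing in the proposal shows that the image is a finite union of convex polytopes.

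Knowing that each image point lies on a segment, in a $\pi$-dependent direction, between two points of lower-dimensional pieces does not put the convex hulls of those pieces inside the image. The image is non-convex, so the ``union of hulls'' claim is unjustified. The entrywise nonnegativity of $(I-\gamma P^\pi)^{-1}e_s$ only gives monotonicity along each individual segment. It says nothing about where the relative boundary of the image lies, so it cannot show that the deterministic-policy slices cut the image into convex cells.

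Your closing arrangement argument does work, but only with one more ingredient that you never establish. The relative boundary of $f(Y^\pi_{s_1,\dots,s_k})$ must lie in the finitely many affine hyperplanes $\{x : x_s = Q^x_a(s)\}$, for $s$ free and $a\in\mathcal A$, where $Q^x_a(s)=r(s,a)+\gamma\sum_{s'}\alpha(s,a;s')x_{s'}$. This is the boundary theorem of Dadashi et al. With it, the argument closes: the image is compact and each open cell is connected, so a cell meeting both the image and its complement would meet the boundary, which is impossible. Image points lying on the hyperplane for $(s,a)$ are exactly values of policies deterministic with action $a$ at $s$, and your induction handles those. ``Connectedness along segments'' does not supply this boundary containment.

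The cleanest way to get the missing ingredient, and in fact the whole theorem, is the structure the paper uses in Appendix~\ref{app:reduction-MDP}. In a fully observable MDP the policy enters the Bellman equation state by state, $x_s=\sum_a\pi(a|s)Q^x_a(s)$, with independent rows $\pi(\cdot|s)$. By uniqueness of the Bellman solution, $x$ is the value of some policy in $Y^\pi_{s_1,\dots,s_k}$ if and only if:
\begin{itemize}
\item these equalities hold at $s_1,\dots,s_k$, and
\item $\min_a Q^x_a(s)\le x_s\le \max_a Q^x_a(s)$ holds at every other state.
\end{itemize}
The final paragraph of that proof verifies this equivalence directly, without the intersection hypothesis.

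Distributing the $\min$ and $\max$ writes the set as a finite union, over choices of actions $(a_s,a'_s)$ at the free states, of polyhedra. Each polyhedron is cut out by the fixed-state equalities together with $Q^x_{a_s}(s)\le x_s\le Q^x_{a'_s}(s)$. Each is bounded because it is contained in the bounded set of value functions.

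This route needs no induction, line theorem or topology. It also yields the containment your arrangement argument was missing. The points where every free-state inequality is strict form a relatively open subset of the image, so any boundary point must make some constraint $x_s=Q^x_a(s)$ tight.
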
 

\section{Feasible Value Functions as a Solution Set}
\label{sec:solution-set}

We introduce interval and parametric linear systems and review key results in this context. 

Later we will explain how the results on MDPs in Section~\ref{sec:geometry-value-functions-MDPs} can be derived using the framework of interval systems. Then we will use the framework of parametric systems to generalize the results to the case of POMDPs.

\subsection{Interval and Parametric Linear Systems}\label{sec:intervals}

Interval linear systems are linear systems $A x - b = 0$ where the matrix $A\in\mathbb{R}^{m\times n}$ and the vector $b\in\mathbb{R}^m$ have entries specified only up to intervals. 

Such systems have been studied in the literature since the 1960s. In particular, the solution sets of interval linear systems are characterized in the Oettli-Prager theorem that we discuss below. 

We write $[A]$ for a matrix of intervals, i.e., a set of matrices $A$ whose entries satisfy $A_{ij} \in [\underline{a_{ij}}, \overline{a_{ij}}]$, for some $\underline{a_{ij}}, \overline{a_{ij}}\in\mathbb{R}$. 

Given such an interval matrix, we write $A^c$ for the corresponding matrix of interval centers, i.e., the matrix with entries $A^c_{ij}=\frac12(\underline{a_{ij}} + \overline{a_{ij}})$, and $A^\Delta$ for the corresponding matrix of interval lengths, i.e., the matrix with entries $(A^\Delta)_{ij}= \frac12(\overline{a_{ij}}-\underline{a_{ij}})$. 

\begin{theorem}[{\citealp[as stated by \citealp{rohnSystemsLinearInterval1989}]{Oettli-Prager-1964}}, eq.~1] 
\label{th:Oettli-Prager}
A vector $x\in\mathbb{R}^n$ is the solution to $Ax-b=0$ for some $m\times n$-matrix $A\in[A]$ and some $m$-vector $b\in [b]$ if and only if 
\begin{equation*}
    \vert A^c x - b^c\vert   \leq   A^\Delta \vert x\vert  + b^\Delta . 
\end{equation*}
\end{theorem}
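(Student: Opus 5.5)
We prove the two implications separately, working row by row. The key object is the residual $A^c x - b^c$ of the center system, compared against the maximal perturbation that the interval radii allow.

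\emph{Necessity.} Suppose $Ax=b$ with $A\in[A]$ and $b\in[b]$. Write $A=A^c+E$ and $b=b^c+e$, where $|E|\le A^\Delta$ and $|e|\le b^\Delta$ entrywise. Then
\begin{equation*}
A^c x-b^c=e-Ex ,
\end{equation*}
and the triangle inequality gives
\begin{equation*}
|A^c x-b^c|\le |E||x|+|e|\le A^\Delta|x|+b^\Delta .
\end{equation*}
This direction is routine.

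\emph{Sufficiency.} Assume the inequality holds and set $y=A^c x-b^c$ and $d=A^\Delta|x|+b^\Delta\ge 0$. We build the perturbation one row $i$ at a time. If $d_i=0$, then $y_i=0$, and we take the $i$-th row of $A$ equal to the $i$-th row of $A^c$ and $b_i=b^c_i$. Otherwise let $t_i=y_i/d_i$, which lies in $[-1,1]$ by hypothesis. Define
\begin{equation*}
A_{ij}=A^c_{ij}-t_i A^\Delta_{ij}\,\mathrm{sgn}(x_j),\qquad b_i=b^c_i+t_i b^\Delta_i .
\end{equation*}
Because $|t_i|\le 1$, these entries lie in the prescribed intervals. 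Computing row $i$,
\begin{equation*}
(Ax-b)_i=y_i-t_i\Bigl(\sum_j A^\Delta_{ij}|x_j|+b^\Delta_i\Bigr)=y_i-t_i d_i=0 ,
\end{equation*}
using $\mathrm{sgn}(x_j)\,x_j=|x_j|$. Hence $Ax=b$. In matrix form this is $A=A^c-T A^\Delta D_x$ and $b=b^c+T b^\Delta$, where $T=\mathrm{diag}(t)$ and $D_x=\mathrm{diag}(\mathrm{sgn}(x))$.

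The only real step is this construction of the perturbation. It works because the rows of $A$ and the entries of $b$ can be chosen independently, so the condition decouples across equations. Each row then amounts to a scalar intermediate-value argument: the affine function $t\mapsto y_i-t d_i$ vanishes for some $t\in[-1,1]$. No obstacle remains beyond handling $d_i=0$ and the sign convention $\mathrm{sgn}(0)=0$.
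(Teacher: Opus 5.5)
Your proof is correct in both directions, including the degenerate rows with $d_i=0$. The paper does not prove this statement; it imports it from Oettli--Prager (as stated by Rohn). Your witness $A=A^c-TA^\Delta D_x$, $b=b^c+Tb^\Delta$ is the classical construction from that literature, so there is no alternative route in the paper to compare against.

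You can also tie it to the paper's own machinery. Your argument is the specialization of Proposition~\ref{col:zonotope-parametric} to the case where every entry of $(A,b)$ is its own parameter. Each generator $p_k^\Delta(A^kx-b^k)$ is then supported on a single row. The zonotope is therefore the box $\prod_i[-d_i,d_i]$, and membership decouples row by row, exactly as in your construction.

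This viewpoint shows why the interval case yields finitely many piecewise-linear inequalities. For the Bellman system, a parameter $p_{o,a}$ enters every row $s$ with $\beta(s;o)>0$, so the zonotope is no longer a box. The analogous componentwise condition in Theorem~\ref{thm:hladik-necessary} is then only necessary. In the fully observable case the box structure returns, which matches the reduction in the appendix.
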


\begin{remark}
The condition stated in Theorem~\ref{th:Oettli-Prager} defines a set $S = \cup_{r\in\{+1,-1\}^n} P_r$, where $P_r$ is the subset of $\mathbb{R}^n$ cut by the $n$ linear inequalities that define the $r$-th orthant, $\operatorname{diag}(r) x\geq 0$, and $2m$ further linear inequalities $A^\Delta \operatorname{diag}(r)x + b^\Delta \pm (A^c x - b^c) \geq 0$. Of the $n+2m$ inequalities that define each $P_r$ only $2m$ are boundaries of $S$ \citep[see, e.g.,][Theorem 1, f]{alefeld-2001-modifications}. 
\end{remark}
 
Now let us discuss a generalization from interval to parametric linear system $A(p)x - b(p)$, where $A(p)$ and $b(p)$ depend linearly on a parameter $p$. 
We consider an affine linear parametrization of the form 
\begin{equation} 
    A(p) = A^0 + \sum_{k=1}^K A^k p_k,\quad 
    b(p) = b^0 + \sum_{k=1}^K b^k p_k, 
\label{eq:parametrization}
\end{equation}
with fixed $A^k$ and $b^k$, and a parameter vector $p$ with entries satisfying $p_k\in[\underline{p_k},\overline{p_k}]$. 

We can convert any interval system into a parametric system by allowing each entry of the parametric matrix to vary as its own parameter \cite{popovaExplicitDescriptionAE2012a}. 
By contrast, moving from a parametric system to an interval system is more difficult in general, if at all possible. 
For parametric systems, the direct analog of the Oettli-Prager theorem only provides a necessary but not sufficient condition for a given vector $x$ to solve a system $A(p)x-b(p)=0$. 

\begin{theorem}[{\citealp[Theorem~2]{Hladik-2012}}]
\label{thm:hladik-necessary}
Let $[p] = \times_{k=1}^K[\underline{p_k},\overline{p_k}]$, $p_k^c = \frac{1}{2}(\overline{p_k}+\underline{p_k})$ and $p_k^\Delta = \frac{1}{2}(\overline{p_k}-\underline{p_k})$. 
Let $A(p) = A^0 + \sum_{k=1}^K p_k A^k$, $b(p)= b^0 + \sum_{k=1}^K p_kb^k$ for $p\in [p]$. 
If a vector $x\in\mathbb{R}^n$ is the solution to $A(p)x-b(p)=0$ for some $p \in [p]$, then 
    \begin{equation*}
        \vert A(p^c)x - b(p^c)\vert  \leq \sum_{k=1}^K p_k^{\Delta}\vert A^kx-b^k\vert .
    \end{equation*}
\end{theorem}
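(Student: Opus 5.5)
The plan is to exploit the affine dependence of the residual on the parameter and then apply the triangle inequality. Fix $x\in\mathbb{R}^n$ and suppose $A(p)x-b(p)=0$ for some $p\in[p]$. From the parametrization, the residual map
\[
r(q) := A(q)x - b(q) = \bigl(A^0x-b^0\bigr) + \sum_{k=1}^K q_k\bigl(A^kx-b^k\bigr)
\]
is affine in $q$, with $x$ held fixed. This is the key observation: once $x$ is fixed, the bilinear system becomes an affine function of the parameters.

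Next, evaluate this affine map at the center $p^c$ and subtract the vanishing residual at $p$. Since $r(p)=0$, we get
\[
A(p^c)x-b(p^c) = r(p^c) - r(p) = \sum_{k=1}^K (p_k^c - p_k)\bigl(A^kx-b^k\bigr),
\]
because the constant terms $A^0x-b^0$ cancel.

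Finally, take componentwise absolute values and apply the triangle inequality:
\[
\vert A(p^c)x-b(p^c)\vert \le \sum_{k=1}^K \vert p_k^c-p_k\vert\,\vert A^kx-b^k\vert.
\]
Since $p_k\in[\underline{p_k},\overline{p_k}]$, the distance to the center satisfies $\vert p_k^c-p_k\vert\le p_k^\Delta$. Because each vector $\vert A^kx-b^k\vert$ is entrywise nonnegative, we may replace $\vert p_k^c-p_k\vert$ by $p_k^\Delta$ termwise, which gives the claimed inequality.

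There is no real obstacle in this argument. The only points needing care are that all inequalities are componentwise and that the vectors $\vert A^kx-b^k\vert$ are nonnegative, which is what allows the scalar bound on $\vert p_k^c-p_k\vert$ to be applied inside the sum. The statement is only a necessary condition, so no converse construction of $p$ is required. Sufficiency would fail here, since a single $p$ would have to realize the signs in every component simultaneously.
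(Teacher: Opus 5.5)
Your argument is correct and complete. The paper only cites this result from Hlad\'ik without proof, but your identity $A(p^c)x-b(p^c)=\sum_k (p^c_k-p_k)(A^kx-b^k)$ with $\vert p^c_k-p_k\vert\le p_k^\Delta$ is the same center-plus-deviation decomposition used in the proof of Proposition~\ref{col:zonotope-parametric} (with $q_k\in[-1,1]$), and the componentwise triangle inequality then yields the enclosure exactly as you do.
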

We refer to this necessary condition as an \textit{enclosure for the solution set}. 
The result can be interpreted as a finite list of piecewise linear inequalities which enclose the solution set. 

\begin{restatable}{remark}{remhladikfacets}
\label{rem:hladik-enclosure-facets}
The necessary condition in Theorem~\ref{thm:hladik-necessary} defines a set of the form 
$$
    S = \cap_{i=1}^m S_i, \quad S_i = \cup_{r\in\{+1,-1\}^{K+1}} P_{i,r}, 
$$
where each $P_{i,r}$ is a polyhedron defined by $K+2$ linear inequalities in $\mathbb{R}^n$, only one of which is a boundary of $S_i$. 
Note that some of the $P_{i,r}$ may be empty and thus redundant. 
Equivalently, distributing the intersections over the unions gives a representation as a union of $2^{m(K+1)}$ polyhedra of the form $\cap_{i=1}^m P_{i,r^i}$, each defined by $m(K+2)$ linear inequalities, of which only $m$ are boundaries of $S$. 
\end{restatable} 

\citet{Hladik-2012} also provides a condition for parametric matrix systems which is both necessary and sufficient for defining the solution set. 

\begin{theorem}[{\citealp[Theorem~3]{Hladik-2012}}]
\label{thm:hladik-necessary-sufficient}
Let $[p] = \times_{k=1}^K[\underline{p_k},\overline{p_k}]$, $p_k^c = \frac{1}{2}(\overline{p_k}+\underline{p_k})$ and $p_k^\Delta = \frac{1}{2}(\overline{p_k}-\underline{p_k})$. 
Let $A(p) = A^0 + \sum_{k=1}^K p_k A^k$, $b(p)= b^0 + \sum_{k=1}^K p_kb^k$ for $p\in [p]$. 
Then $x\in \{x\in\mathbb{R}^n\colon A(p)x=b(p) \text{ for some } p\in [p]\}$ if and only if for every $y \in \mathbb{R}^n$ it solves 
\begin{equation*}
    y^\top (A(p^c)x -b(p^c)) \leq \sum_{k=1}^K p_k^\Delta \vert y^\top (A^k x - b^k)\vert .
\end{equation*}
\end{theorem}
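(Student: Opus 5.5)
The natural route is linear-programming duality. The map $p\mapsto A(p)x-b(p)$ is affine, and the feasible set $[p]$ is a box. For fixed $x$, write
\[
  A(p)x-b(p) \;=\; c + \sum_{k=1}^K (p_k-p_k^c)\, d_k,
  \qquad c:=A(p^c)x-b(p^c),\quad d_k:=A^kx-b^k .
\]
Then $x$ belongs to the solution set if and only if $0\in Z$, where
\[
  Z:=\Big\{\, c+\sum_{k=1}^K t_k d_k \;:\; |t_k|\le p_k^\Delta \,\Big\}.
\]
This set $Z$ is a zonotope: a compact convex polytope, being the Minkowski sum of the point $c$ and the segments $[-p_k^\Delta d_k,\,p_k^\Delta d_k]$. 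So the problem reduces to deciding whether the origin lies in a compact convex set. For that, the separating hyperplane theorem (equivalently, comparison with the support function) is the tool.

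\textbf{Necessity.} Suppose $A(p)x=b(p)$ for some $p\in[p]$, and set $t_k=p_k-p_k^c$. Then $c=-\sum_k t_kd_k$. For every $y\in\mathbb{R}^n$,
\[
  y^\top c \;=\; -\sum_k t_k\, y^\top d_k \;\le\; \sum_k |t_k|\,|y^\top d_k| \;\le\; \sum_k p_k^\Delta\,|y^\top d_k| .
\]
This is exactly the stated inequality, so this direction is immediate.

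\textbf{Sufficiency.} This is the step requiring the separation argument. Suppose $0\notin Z$. Since $Z$ is compact and convex, strict separation gives a vector $z$ with $z^\top w>0$ for all $w\in Z$. Equivalently,
\[
  \min_{w\in Z} z^\top w \;=\; z^\top c - \sum_k p_k^\Delta\,|z^\top d_k| \;>\; 0 .
\]
The minimum is computed coordinatewise by choosing $t_k=-p_k^\Delta\operatorname{sign}(z^\top d_k)$. Taking $y=z$ then violates the assumed inequality. Hence, if the inequality holds for all $y$, we get $0\in Z$. The corresponding $t$ yields a parameter $p=p^c+t\in[p]$ with $A(p)x=b(p)$.

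\textbf{Main care points.} The key identification is the support function of the zonotope,
\[
  h_Z(y) = y^\top c + \sum_k p_k^\Delta\,|y^\top d_k| .
\]
With it, the statement reads $0\in Z \iff -h_Z(-y)\le 0\le h_Z(y)$ for all $y$. Applying the hypothesis to both $y$ and $-y$ gives $|y^\top c|\le\sum_k p_k^\Delta|y^\top d_k|$, so a one-sided form suffices for all $y$. The only other points to check are that $Z$ is closed (it is, being a finite Minkowski sum of segments), so strict separation applies, and that $x$ is fixed throughout, so no invertibility of $A(p)$ is needed.
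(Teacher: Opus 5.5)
Your proof is correct and follows the route the paper itself uses. The paper only cites Hladík for this theorem, but its Proposition~\ref{col:zonotope-parametric} (solvability is equivalent to the midpoint residual lying in the zonotope generated by the vectors $p_k^\Delta(A^kx-b^k)$), combined with the support-function characterization of zonotope membership in Proposition~\ref{prop:zonotope_facet_inequalities}, amounts to your reduction to $0\in Z$ followed by separation. The only difference is that you test $0\in c+Z$ rather than $c\in Z$, and these agree because the zonotope is centrally symmetric.
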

\begin{remark} 
Note that this description involves infinitely many piecewise linear inequalities, one for each choice of $y$. 
Each inequality verifies the non-negativity of a continuous piecewise linear function with up to $2^K$ linear pieces. 
\end{remark} 

\subsection{Parametric System for the Bellman Equation} 
\label{sec:ineq}

Now we provide an explicit description of the Bellman equation as a parametric linear system with an affine linear parameterization in the policy. 

The system \eqref{eq:bellman} with the transition kernel given in \eqref{eq:transition-kernel} and the reward vector given in \eqref{eq:reward-vector} can be written as $A(p) x - b(p) = 0$,  where the coefficients parametrization  $(A(p), b(p)) \in \mathbb{R}^{\mathcal{S}\times\mathcal{S}} \times \mathbb{R}^\mathcal{S}$ is given by 
\begin{align}
    A(p) &= A^{0} + \sum_{(o,a)\in\mathcal{O}\times\mathcal{A}} A^{(o,a)} p_{o,a}, \quad \text{with} \label{eq:eqs1a} \\ 
    (A^0)_{s,s'} &= I_{s,s'}, \nonumber \\
    (A^{(o,a)})_{s,s'} &= -\gamma\alpha(s,a;s')\beta(s;o), \quad (o,a)\in\mathcal{O}\times\mathcal{A} , \nonumber \\
    \intertext{and} b(p) &= b^{0} + \sum_{(o,a)\in\mathcal{O}\times\mathcal{A}} b^{(o,a)} p_{o,a}, \quad \text{with}     \label{eq:eqs1b} \\ 
    (b^{0})_s &= 0, \nonumber\\
    (b^{(o,a)})_s &= r(s,a) \beta(s;o), \quad (o,a)\in\mathcal{O}\times\mathcal{A}, \nonumber 
\end{align}
with parameter $p = (p_{o,a}) \in\Delta^\mathcal{O}_\mathcal{A}\subseteq \mathbb{R}^{\mathcal{O}\times\mathcal{A}}$. 

We denote by $L$ the map $p\mapsto (A(p),b(p))$ defined in \eqref{eq:eqs1a}--\eqref{eq:eqs1b} and write $S = L(\Delta^\mathcal{O}_\mathcal{A})$ for the image of the policy polytope. 

We would like to leverage the result of \citet{Hladik-2012} to characterize the solution set to the above system. 
However, the result given in Theorem~\ref{thm:hladik-necessary-sufficient} requires an affine parametrization by a hyperrectangle. 
To address this problem, we will represent $S$ as an intersection of sets parametrized by hyperrectangles. 
Then we can describe the solution set in terms of the combined set of inequalities given by Theorem~\ref{thm:hladik-necessary-sufficient} for each of the corresponding parametric systems. 

\paragraph{Intersection of Rectangular Parametrizations}

For each $i=1,\ldots, \vert \mathcal{A}\vert $,  fix $\mathcal{A}_i=\mathcal{A} \setminus\{a_i\}$  and define the parametrization $(B_i(v), c_i(v))\in\mathbb{R}^{\mathcal{S}\times \mathcal{S}}\times\mathbb{R}^{\mathcal{S}}$ by 

\begin{align}
    B_i(v) &= B_i^{0} + \sum_{(o,a)\in \mathcal{O}\times \mathcal{A}_i} B_i^{(o,a)} v_{o,a},\quad \text{with} 		\label{eq:eqs2a}\\
    (B_i^0)_{s,s'} &= (A^0)_{s,s'} + \sum_{o\in\mathcal O}  (A^{(o,a_i)})_{s,s'},\nonumber\\
    (B_i^{(o,a)})_{s,s'} &= (A^{(o,a)})_{s,s'} - (A^{(o,a_i)})_{s,s'}, 
    \; (o,a) \in\mathcal O \times \mathcal A_i, 
    \nonumber
    \intertext{and}
    c_i(v) &= c_i^{0} + \sum_{(o,a)\in \mathcal{O}\times \mathcal{A}_i} c_i^{(o,a)} v_{o,a},\quad \text{with} \label{eq:eqs2b}\\
    (c_i^{0})_s &= \sum_{o\in\mathcal O} (b^{(o,a_i)})_s, \nonumber\\
    (c_i^{(o,a)})_s &= (b^{(o,a)})_s - (b^{(o,a_i)})_s,  
    \;\; (o,a) \in \mathcal O \times \mathcal A_i, 
    \nonumber
\end{align}
with parameter $v\in [0,1]^{\mathcal{O}\times\mathcal{A}_i}$. 
We write $v^c$ for the center of the parameter region, which is the matrix with entries $(v^c)_{o,a}=\frac12$. Similarly, we write $v^\Delta$ for the matrix of possible deviations from the center, $(v^\Delta)_{o,a}=\frac12$. 

Denote by $S_i$ the image of the parametrization \eqref{eq:eqs2a}--\eqref{eq:eqs2b}. 
Note that $S_i = L(E_i)$, where $E_i$ is the image of the map $[0,1]^{\mathcal{O}\times\mathcal{A}_i}\to \mathbb{R}^{\mathcal{O}\times\mathcal{A}}$, $v\mapsto p$ given by $p_{o,a_i}=1-\sum_{a\neq a_i}v_{o,a}$ $p_{o,a}=v_{o,a}$, $a\neq a_i$. 

\begin{restatable}{proposition}{propintersectionrev}
\label{prop:intersectionrev}
Assume that
$
L(E_i)\cap L(E_j)=L(E_i\cap E_j)$ for all $i,j$, 
or, more strongly, that \(L\) is injective on \(\bigcup_i E_i\). 
Then
$S=\bigcap_i S_i$.
\end{restatable}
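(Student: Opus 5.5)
The plan is to reduce the claim to a statement about the parameter sets $E_i$ and then push it through $L$. First I will show $\bigcap_i E_i = \Delta^{\mathcal O}_{\mathcal A}$. For each $i$, the set $E_i$ consists of those $p\in\mathbb{R}^{\mathcal O\times\mathcal A}$ whose rows sum to one ($\sum_a p_{o,a}=1$ by construction) and whose entries $p_{o,a}$, $a\neq a_i$, lie in $[0,1]$; the entry $p_{o,a_i}=1-\sum_{a\neq a_i}v_{o,a}$ is unconstrained in sign. Hence $\Delta^{\mathcal O}_{\mathcal A}\subseteq E_i$ for every $i$, since a policy has all entries in $[0,1]$ with rows summing to one. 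Conversely, if $p\in\bigcap_i E_i$, then every entry $p_{o,a}$ satisfies $0\le p_{o,a}\le 1$: pick any $j$ with $a_j\neq a$, which exists whenever $\vert\mathcal A\vert\ge 2$, and use membership in $E_j$. Together with the row sums this gives $p\in\Delta^{\mathcal O}_{\mathcal A}$. The degenerate case $\vert\mathcal A\vert=1$ is immediate, since then $E_1$ is the single point $p_{o,a_1}=1$, which equals the simplex.

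Next I handle the easy inclusion, $S=L(\Delta^{\mathcal O}_{\mathcal A})=L(\bigcap_i E_i)\subseteq\bigcap_i L(E_i)=\bigcap_i S_i$. This holds for any map.

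The reverse inclusion is where the hypothesis is needed, and it is the only real obstacle. The image of an intersection is generally strictly smaller than the intersection of images. The hypothesis is stated only pairwise, while there are $\vert\mathcal A\vert$ sets. So I will argue by induction that $\bigcap_{i\le k}L(E_i)=L(\bigcap_{i\le k}E_i)$. To make the induction step work, I would rely on the stronger injectivity assumption. If $L$ is injective on $U=\bigcup_i E_i$, take $x\in\bigcap_i L(E_i)$ and choose $p_i\in E_i$ with $L(p_i)=x$. All the $p_i$ lie in $U$ and have the same image, so injectivity forces $p_1=\cdots=p_m=:p$. Then $p\in\bigcap_i E_i=\Delta^{\mathcal O}_{\mathcal A}$, and so $x\in S$.

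Under only the pairwise hypothesis, I would argue similarly. Given $x\in S_i\cap S_j$, the hypothesis yields a single $p\in E_i\cap E_j$ with $L(p)=x$. To extend this to all indices, I note that for $\vert\mathcal A\vert\ge 3$ the argument in the first paragraph needs only two indices: $E_i\cap E_j$ already forces every entry into $[0,1]$, because each action differs from $a_i$ or from $a_j$. Hence $E_i\cap E_j=\Delta^{\mathcal O}_{\mathcal A}$ for any $i\neq j$ and $\vert\mathcal A\vert\ge 2$, and therefore
$$\bigcap_k S_k\subseteq S_1\cap S_2=L(E_1\cap E_2)=L(\Delta^{\mathcal O}_{\mathcal A})=S.$$
This observation, that any two of the $E_i$ already intersect in the policy polytope, is the key step that makes the pairwise hypothesis suffice. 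Combined with the easy inclusion, it gives $S=\bigcap_i S_i$.
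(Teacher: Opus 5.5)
Your proof is correct and has the same skeleton as the paper's. Both identify $\bigcap_i E_i$ with $\Delta^{\mathcal O}_{\mathcal A}$, obtain $S\subseteq\bigcap_i S_i$ for free, and use the hypothesis for the reverse inclusion.

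The difference is in how the hypothesis is used. The paper passes directly from the pairwise assumption to $\bigcap_i L(E_i)=L(\bigcap_i E_i)$. For abstract sets this does not follow from pairwise preservation: take a constant map and three sets that meet pairwise but not jointly. Your observation closes this step. You note that $E_i\cap E_j=\Delta^{\mathcal O}_{\mathcal A}$ already for any $i\neq j$, since every action differs from $a_i$ or from $a_j$. Then $\bigcap_k S_k\subseteq S_1\cap S_2=L(E_1\cap E_2)=S$, so the pairwise hypothesis suffices as stated.

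Two small remarks. First, the induction you announce is never carried out and is not needed. Second, your separate injectivity argument is correct but redundant, because injectivity on $\bigcup_i E_i$ already implies $L(E_i)\cap L(E_j)=L(E_i\cap E_j)$.
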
 

\begin{proof}
    To be found in Appendix~\ref{subsec:proofs_3}.
\end{proof} 

\begin{remark} 
We observe that if $\vert \mathcal O\vert \leq \vert \mathcal S\vert $ and $\vert \mathcal A\vert \leq \vert \mathcal S\vert +1$, then the assumption in Proposition~\ref{prop:intersectionrev} is satisfied for generic choices of $\alpha,\beta,r$, with details in Appendix~\ref{subsec:proofs_3}. 
\end{remark} 

\subsection{Solution Set of the Bellman Equation}  

With the description of Proposition~\ref{prop:intersectionrev} at hand, we can now leverage Theorem~\ref{thm:hladik-necessary-sufficient} to characterize the solution set of the Bellman equation parametrized by the policy polytope. 

\begin{theorem}[Value functions of POMDPs]
\label{thm:value_functions}
Consider a (PO)MDP. Suppose the assumption of Proposition~\ref{prop:intersectionrev} holds. 
Then $x\in\mathbb{R}^\mathcal{S}$ is a feasible value function, meaning that it solves the Bellman equation $(I-\gamma P^\pi)x - r^\pi=0$ for some $\pi\in\Delta^\mathcal{O}_\mathcal{A}$ if and only if for every $y\in\mathbb{R}^n$ and every $i\in\{1,\ldots, \vert \mathcal{A}\vert \}$ 
it satisfies 
\begin{align*}
    y^\top (B_i(v^c)x -c_i(v^c)) \leq& \\
    \sum_{(o,a)\in\mathcal{O}\times\mathcal{A}_i} v_{o,a}^\Delta \vert y^\top &(B_i^{(o,a)} x - c_i^{(o,a)})\vert   , 
\end{align*}
where the matrices are given in eqs.~\ref{eq:eqs1a}--\ref{eq:eqs1b} and \ref{eq:eqs2a}--\ref{eq:eqs2b}. 
\end{theorem}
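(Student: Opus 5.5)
The proof combines Proposition~\ref{prop:intersectionrev} with Theorem~\ref{thm:hladik-necessary-sufficient}, applied once for each $i$.

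\emph{Step 1: reformulate feasibility.} The vector $x$ is a feasible value function exactly when $A(p)x=b(p)$ for some $p\in\Delta^{\mathcal O}_{\mathcal A}$. Equivalently, there is a pair $(A,b)\in S=L(\Delta^{\mathcal O}_{\mathcal A})$ with $Ax=b$. Since $\gamma\in(0,1)$, every $A(p)$ is invertible, so each such $x$ is indeed $V^\pi$ for the policy $\pi=p$. By Proposition~\ref{prop:intersectionrev}, under the stated assumption we have $S=\bigcap_i S_i$. So $x$ is feasible if and only if there is a single pair $(A,b)$ lying in every $S_i$ with $Ax=b$.

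\emph{Step 2: pass from one pair to separate pairs.} The characterization in the theorem is stated separately for each $i$, so it only certifies that for each $i$ there exists $(A_i,b_i)\in S_i$ with $A_ix=b_i$. These witnesses need not coincide a priori, and bridging this gap is the main obstacle.

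To close it, I would first check that each $S_i$ equals $S$ itself. Each $E_i$ contains the full simplex $\Delta^{\mathcal O}_{\mathcal A}$, because every $p$ in the simplex has coordinates $v_{o,a}=p_{o,a}\in[0,1]$. Hence $S\subseteq S_i$. The reverse inclusion fails in general, because $E_i$ allows $p_{o,a_i}<0$. Recovering it therefore needs the intersection argument: a point lying in all $E_i$ has $p_{o,a_i}\ge 0$ for every $i$, so it lies in the simplex.

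I would try to use the injectivity of $L$ on $\bigcup_i E_i$ in the strong form of the assumption. Given witnesses $p^{(i)}\in E_i$ with $A(p^{(i)})x=b(p^{(i)})$ for every $i$, I would aim to show that all the $p^{(i)}$ coincide. Then that common $p$ lies in $\bigcap_i E_i=\Delta^{\mathcal O}_{\mathcal A}$.

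A cleaner route reads the hypothesis of Proposition~\ref{prop:intersectionrev} as applying to solution sets. Then the solution set of $\bigcap_i S_i$ equals the intersection of the solution sets of the $S_i$. This identification is exactly the content I would verify, and I would state it explicitly as the interpretation of the assumption. It reduces to showing that $x$ determines the pair $(A,b)$, or at least the parameter, uniquely within $\bigcup_i E_i$.

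\emph{Step 3: apply Hlad\'ik's theorem to each $S_i$.} For each $i$, the parametrization \eqref{eq:eqs2a}--\eqref{eq:eqs2b} is affine over the hyperrectangle $[0,1]^{\mathcal O\times\mathcal A_i}$, with center $v^c=\tfrac12$ and radius $v^\Delta=\tfrac12$. Theorem~\ref{thm:hladik-necessary-sufficient} therefore says that $x$ solves $B_i(v)x=c_i(v)$ for some $v$ in the box if and only if the displayed inequality holds for all $y$.

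It remains to confirm that $(B_i(v),c_i(v))=L(p)$ under the substitution $p_{o,a_i}=1-\sum_{a\ne a_i}v_{o,a}$. This is a direct expansion: the $A^{(o,a_i)}$ terms collect into $B_i^0$ and the differences $A^{(o,a)}-A^{(o,a_i)}$, and likewise for $b$. Combining this with Steps 1--2 gives the claimed equivalence over all $i$ and all $y$.
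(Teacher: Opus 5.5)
Your Steps 1 and 3 are the whole of the paper's proof, which just cites Proposition~\ref{prop:intersectionrev} and Theorem~\ref{thm:hladik-necessary-sufficient}. The obstacle you isolate in Step 2 is real, and your proposal does not resolve it. Applying Theorem~\ref{thm:hladik-necessary-sufficient} to each box parametrization only certifies $x\in\bigcap_i\{x:\exists (A,b)\in S_i,\ Ax=b\}$. Feasibility means $x\in\{x:\exists (A,b)\in\bigcap_i S_i,\ Ax=b\}$, and knowing $S=\bigcap_i S_i$ does not let you move the intersection past the existential quantifier.

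Your repair via injectivity of $L$ mixes up two different maps. Injectivity says distinct parameters give distinct coefficient pairs, not distinct solutions. For fixed $x$, the parameters with $A(p)x=b(p)$ and $\sum_a p_{o,a}=1$ form an affine subspace (when nonempty) of dimension at least $|\mathcal O|(|\mathcal A|-1)-|\mathcal S|$. This is positive even in the generic-injectivity regime (e.g.\ $|\mathcal O|=|\mathcal S|$, $|\mathcal A|=3$), so the witnesses $p^{(i)}\in E_i$ have no reason to coincide. Your ``cleaner route'' simply assumes the missing conclusion.

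In fact the step cannot be closed, because the claimed equivalence fails once $|\mathcal A|\ge 3$. For $|\mathcal A|=2$ every $E_i$ equals $\Delta^{\mathcal O}_{\mathcal A}$; in the fully observable case the states decouple, which is why Appendix~\ref{app:reduction-MDP} works. Here is a counterexample:
\begin{itemize}
\item Take $\mathcal S=\{s_1,s_2\}$, $\mathcal O=\{o_1,o_2,o_3\}$, $\mathcal A=\{a_1,a_2,a_3\}$, $\beta(\cdot|s_1)=(\tfrac12,\tfrac12,0)$, $\beta(\cdot|s_2)=(0,\tfrac12,\tfrac12)$, and generic $\alpha$.
\item Choose $r$ so that at $x=(\tfrac{61}{8},\tfrac{61}{8})$ the action values $r(s,a)+\gamma\sum_{s'}\alpha(s,a;s')x_{s'}$ are $q_1=(0,10,1)$ at $s_1$ and $q_2=(10,0,1)$ at $s_2$.
\item The Bellman equations become $\langle p_{o_1}+p_{o_2},q_1\rangle=\tfrac{61}{4}=\langle p_{o_2}+p_{o_3},q_2\rangle$. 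Adding them gives $\langle p_{o_1},q_1\rangle+\langle p_{o_2},(10,10,2)\rangle+\langle p_{o_3},q_2\rangle=\tfrac{61}{2}$. The left side is at most $30$ on $\Delta^{\mathcal O}_{\mathcal A}$, so $x$ is not feasible.
\item Nevertheless, three witnesses solve both equations. The point $\bigl((-\tfrac34,1,\tfrac34),(\tfrac{11}{20},\tfrac{9}{20},0),(\tfrac{39}{40},\tfrac{1}{40},0)\bigr)$ lies in $E_1$. Its mirror image under $a_1\leftrightarrow a_2$, $o_1\leftrightarrow o_3$ lies in $E_2$. The point $\bigl((\tfrac38,\tfrac58,0),(1,1,-1),(\tfrac58,\tfrac38,0)\bigr)$ lies in $E_3$.
\item By Theorem~\ref{thm:hladik-necessary-sufficient}, $x$ therefore satisfies every displayed inequality for every $y$ and every $i$.
\item The theorem's hypothesis is satisfied. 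The kernel of $z\mapsto\sum_{o,a}(A^{(o,a)},b^{(o,a)})z_{o,a}$ is $\{z: z_{o_1}=-z_{o_2}=z_{o_3}\}$, and a short check shows $L(E_i)\cap L(E_j)=L(\Delta^{\mathcal O}_{\mathcal A})$ for $i\neq j$.
\end{itemize}
So the paper's proof has exactly the hole you found. A correct statement needs a condition that couples the different $i$, or a restriction to $|\mathcal A|\le 2$ or full observability.
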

\begin{proof}

This follows from the description of the Bellman system given in Section~\ref{sec:ineq}, particularly Proposition~\ref{prop:intersectionrev}, and the characterization of solution sets given in Theorem~\ref{thm:hladik-necessary-sufficient}. 
\end{proof}

In Figure~\ref{fig:linear_ineqs} we illustrate the characterization provided by Theorem~\ref{thm:value_functions}, in which the feasible set of value functions for a POMDP is described via infinitely many piecewise linear inequalities. 

\begin{remark} 
In the special case of fully observable MDPs, Theorem~\ref{thm:value_functions} reduces to finitely many piecewise linear inequalities. 
For details, see Appendix~\ref{app:reduction-MDP}. 
\end{remark} 

\begin{figure}
    \centering
    \includegraphics[width=0.9\linewidth]{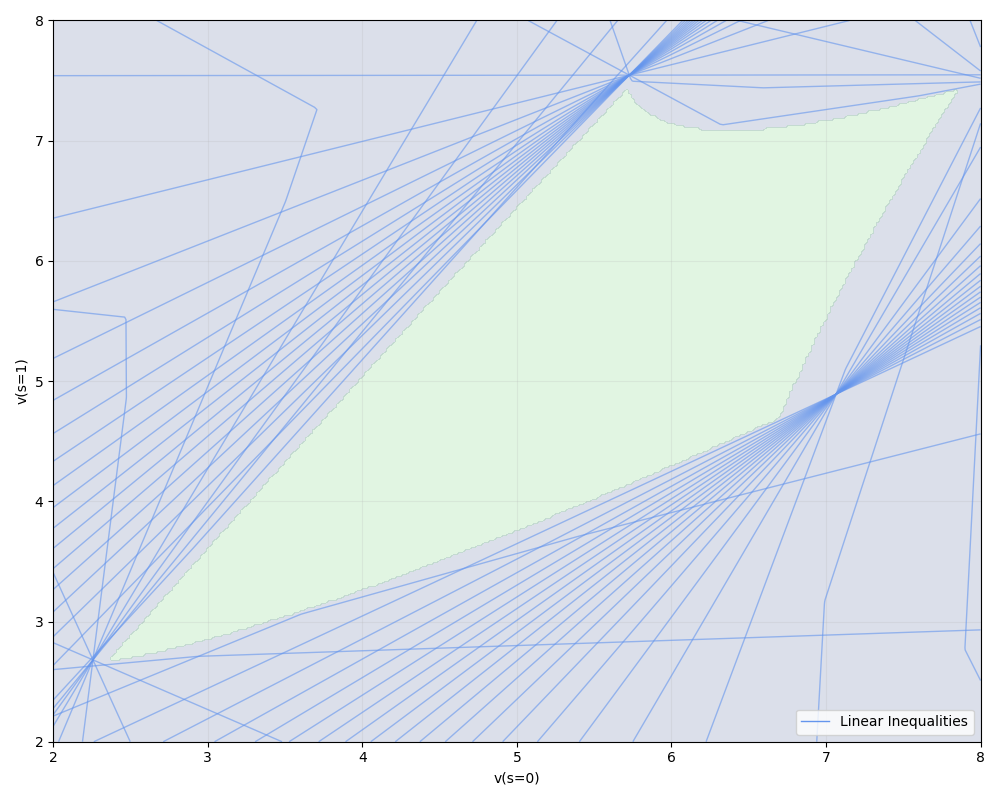}
    \caption{With Theorem~\ref{thm:value_functions}, we are able to identify the feasible space of value functions for any POMDP as an intersection of infinitely many piecewise linear inequalities. Here we sampled 50 such inequalities for a POMDP as light blue lines and show they enclose the feasible region, shown in green.}
    \label{fig:linear_ineqs}
\end{figure}

We can obtain an enclosure analog to Theorem~\ref{thm:hladik-necessary} for the feasible value functions of POMDPs as a special case of  Theorem~\ref{thm:value_functions}. 

\begin{corollary}[Enclosure of Value Functions of POMDPs]
\label{thm:enclosure_mdp}
    Consider a POMDP. Then the set of feasible value functions, i.e., solutions to the Bellman equation $(I-\gamma P^\pi)x - r^\pi = 0$, is contained within the set defined by the following constraints:
    \begin{align*}
    \vert B_i(v^c)x -c_i(v^c)\vert \leq& \\
    \sum_{(o,a)\in\mathcal{O}\times\mathcal{A}_i} v_{o,a}^\Delta \vert &(B_i^{(o,a)} x - c_i^{(o,a)})\vert.
    \end{align*}
\end{corollary}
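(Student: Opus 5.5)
The plan is to work one rectangular parametrization at a time. Fix $i\in\{1,\ldots,\vert\mathcal A\vert\}$ and show that every feasible value function satisfies the $i$-th componentwise inequality; the corollary then follows by taking all $i$. Notably, this does not need the assumption of Proposition~\ref{prop:intersectionrev}. That assumption is only required for the reverse inclusion $\bigcap_i S_i\subseteq S$, whereas here we only need the easy inclusion $S\subseteq S_i$ for each $i$.

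\textbf{Step 1: $S\subseteq S_i$.} Let $x$ be feasible, so $(I-\gamma P^\pi)x-r^\pi=0$ for some $\pi\in\Delta^{\mathcal O}_{\mathcal A}$. Put $p_{o,a}=\pi(a|o)$ and define $v\in[0,1]^{\mathcal O\times\mathcal A_i}$ by $v_{o,a}=p_{o,a}$ for $a\neq a_i$. Since $\pi(\cdot|o)$ is a probability vector, each $v_{o,a}\in[0,1]$ and $p_{o,a_i}=1-\sum_{a\neq a_i}v_{o,a}$, so $p\in E_i$. Substituting this expression for $p_{o,a_i}$ into \eqref{eq:eqs1a}--\eqref{eq:eqs1b} and regrouping terms gives exactly the coefficients in \eqref{eq:eqs2a}--\eqref{eq:eqs2b}. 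Hence $A(p)=B_i(v)$ and $b(p)=c_i(v)$, and therefore $B_i(v)x-c_i(v)=0$.

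\textbf{Step 2: apply the necessary condition.} The system $B_i(v)x-c_i(v)=0$ is affinely parametrized over the hyperrectangle $[0,1]^{\mathcal O\times\mathcal A_i}$, whose center is $v^c$ and whose half-widths are $v^\Delta$. Theorem~\ref{thm:hladik-necessary} therefore applies directly and yields
\[
\vert B_i(v^c)x-c_i(v^c)\vert\le\sum_{(o,a)\in\mathcal O\times\mathcal A_i}v^\Delta_{o,a}\vert B_i^{(o,a)}x-c_i^{(o,a)}\vert .
\]

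As an alternative to Step 2, one could derive the same inequality from Theorem~\ref{thm:value_functions} by choosing $y=\pm e_s$, which recovers each coordinate of the absolute-value inequality. This route inherits that theorem's assumption, so the direct argument via Theorem~\ref{thm:hladik-necessary} is preferable.

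The only step needing real care is the bookkeeping in Step 1: checking that the substitution reproduces $B_i^0$ and $c_i^0$ with the sums over $o$ of the $a_i$-terms. This is the step I expect to be the main obstacle, though it is routine. Everything else is a direct invocation of earlier results.
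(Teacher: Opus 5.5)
Your proof is correct, and your main route differs from the paper's. The paper obtains the corollary by specializing Theorem~\ref{thm:value_functions}. It uses only the ``only if'' direction, which needs nothing more than $S\subseteq\bigcap_i S_i$; this is why it remarks that $L(E_i)\cap L(E_j)\supseteq L(E_i\cap E_j)$ always holds. It then plugs a particular $y$ into the infinite family of inequalities.

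You instead verify $S\subseteq S_i$ directly, via the substitution $p_{o,a_i}=1-\sum_{a\neq a_i}v_{o,a}$. You then apply the enclosure Theorem~\ref{thm:hladik-necessary} to each box-parametrized system, which gives the componentwise inequality at once, with no choice of $y$.

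Your route is more elementary: Theorem~\ref{thm:hladik-necessary} amounts to one triangle inequality applied to the identity of Proposition~\ref{col:zonotope-parametric}. It also sidesteps a wrinkle in the paper's argument. Read literally, $y=\operatorname{sign}(B_i(v^c)x-c_i(v^c))$ produces only a single scalar ($\ell_1$-type) inequality, which is strictly weaker than the componentwise one. The displayed identity equating $\vert\operatorname{sign}(C)^\top(B_i^{(o,a)}x-c_i^{(o,a)})\vert$ with a vector of absolute values does not hold as written. The paper's choice has to be read coordinatewise, $y=\operatorname{sign}\bigl((B_i(v^c)x-c_i(v^c))_s\bigr)e_s$ for each $s$, which is exactly your alternative $y=\pm e_s$. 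What the paper's route buys is the conceptual point that the enclosure is the subfamily of the exact characterization indexed by coordinate directions.

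One small correction to your closing remark: that alternative does not actually inherit the hypothesis of Proposition~\ref{prop:intersectionrev}. The necessary direction of Theorem~\ref{thm:value_functions} uses only the unconditional inclusion $S\subseteq\bigcap_i S_i$, which is precisely your Step~1.
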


\begin{proof}
This follows from Theorem~\ref{thm:value_functions} by setting $y$ equal to the sign of each component of $B_i(v^c)x - c_i(v^c)$ and noting that the inclusion $L(E_i)\cap L(E_j) \supseteq L(E_i\cap E_j)$ always holds. In that case, we have
\begin{align*}
    y^\top&(B_i(v^c)x - c_i(v^c)) = \\&(\operatorname{sign}(B_i(v^c)x - c_i(v^c)))^\top(B_i(v^c)x - c_i(v^c)) \\
    &= \vert (B_i(v^c)x - c_i(v^c)) \vert
\end{align*}

and 
\begin{align*}
    \vert y^\top&(B_i^{(o,a)}x - c_i^{(o,a)}) \vert= \\&
    \vert (\operatorname{sign}(B_i(v^c)x - c_i(v^c)))^\top(B_i^{(o,a)}x - c_i^{(o,a)}) \vert \\
    &= \vert (B_i^{(o,a)}x - c_i^{(o,a)}) \vert
\end{align*}

and the desired relation follows.
\end{proof}

\section{Finite Description of the Solution Set}
\label{sec:finite-solution-set}

Theorem~\ref{thm:value_functions} gives a necessary and sufficient condition for a vector to be a feasible value function of a POMDP. 
However, it requires infinitely many piecewise linear inequalities to do so. 
In this section we seek an alternative and equivalent characterization that requires only finitely many equations and inequalities. 
We shall refer to such a characterization as a \textit{finite description} of a solution set. 

\subsection{Finite Description for Parametric Systems}

We first write the following criterion that we will then use to extract a finite description.

\begin{restatable}{proposition}{colzonotope}
\label{col:zonotope-parametric}
A vector $x$ solves a parametric matrix system $A(p)x - b(p) = 0$ with parametrization (\ref{eq:parametrization}) if and only if there exist $q_k \in [-1,1]$, $k=1,\ldots, K$ such that 
\begin{align*}
    A(p^c)x - b(p^c) = \sum_{k=1}^K q_k p_k^\Delta (A^k x - b^k).
\end{align*}
\end{restatable}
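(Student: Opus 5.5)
The plan is a direct change of variables on the parameter box, with no appeal to the earlier theorems. Throughout, $p$ ranges over $[p]=\times_{k=1}^K[\underline{p_k},\overline{p_k}]$, as in the setting of Theorem~\ref{thm:hladik-necessary-sufficient}. Every $p_k$ in $[\underline{p_k},\overline{p_k}]$ can be written uniquely as $p_k = p_k^c - q_k p_k^\Delta$ with $q_k\in[-1,1]$, provided $p_k^\Delta>0$. The sign convention is chosen to match the displayed identity; the opposite sign works equally well because $[-1,1]$ is symmetric under $q\mapsto -q$.

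First, expand the residual using the affine parametrization \eqref{eq:parametrization}:
\begin{align*}
A(p)x-b(p) = A^0x-b^0+\sum_{k=1}^K p_k (A^k x-b^k).
\end{align*}
Substituting $p_k=p_k^c - q_kp_k^\Delta$ and regrouping gives
\begin{align*}
A(p)x-b(p) = A(p^c)x-b(p^c) - \sum_{k=1}^K q_k p_k^\Delta (A^kx-b^k).
\end{align*}
Here we use linearity once more to recognise $A^0x-b^0+\sum_k p_k^c(A^kx-b^k)$ as $A(p^c)x-b(p^c)$.

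For the forward direction, suppose $A(p)x=b(p)$ for some $p\in[p]$. Define $q_k=(p_k^c-p_k)/p_k^\Delta\in[-1,1]$. The identity above then yields exactly the claimed equation. For the converse, given $q\in[-1,1]^K$ satisfying the equation, set $p_k=p_k^c-q_kp_k^\Delta$. Then $p\in[p]$, and the same identity shows that $A(p)x-b(p)=0$.

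The only point needing care is degenerate parameters with $p_k^\Delta=0$. In that case $p_k$ is fixed at $p_k^c$, the $k$-th term on the right vanishes whatever $q_k$ is, and one can choose $q_k=0$, or any value in $[-1,1]$, in the forward direction. With that handled, the argument is entirely routine; it is just the statement that the solution set is the image of the box $[-1,1]^K$ under this reparametrization.
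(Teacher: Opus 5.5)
Your proposal is correct and follows essentially the same route as the paper, which also writes $p = p^c + \delta p$ with $\delta p_k = q_k p_k^\Delta$ and regroups the affine expansion around the midpoint. Your substitution $p_k = p_k^c - q_k p_k^\Delta$ and your handling of $p_k^\Delta = 0$ are slightly more careful than the paper: its substitution actually yields the identity with a minus sign, which it absorbs without comment via the symmetry $q \mapsto -q$ of $[-1,1]$.
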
 

\begin{proof}
    To be found in Appendix~\ref{proof:zonotope-parametric}.
\end{proof}

Let $D$ be the matrix of deviations that appear on the right hand side of the statement, with the $k$-th column of $D$ given as 
$$
D_k(x) = p_k^\Delta (A^kx-b^k).
$$
Similarly, let $C$ be the vector of midpoint residuals that appear on the left hand side, 
$$
C(x) = A(p^c)x - b(p^c).
$$
Then we can rephrase the necessary and sufficient condition stated in Proposition~\ref{col:zonotope-parametric} in terms of these matrices as 
\begin{equation} 
C = Dq, \quad \text{for some } -1 \leq q_k \leq 1. 
\end{equation} 
This means that $C(x)$ is contained in the zonotope generated by the columns of $D(x)$. 
We interpret this as two conditions: 
\begin{enumerate} 
    \item Equations: $C(x)$ is contained in $\operatorname{col}(D(x))$. 
    \item Inequalities: the projection of $C(x)$ onto $\operatorname{col}(D(x))$ satisfies the facet-defining inequalities of the zonotope generated by the columns of $D(x)$. 
\end{enumerate} 

\paragraph{Equality Condition}

The first condition is equivalent to 
asking that all $(r+1)\times (r+1)$ sub-matrices of $[D|C]$ have determinant zero, where $r = \operatorname{rank}(D)$. 
This gives us a system of polynomial equations in $x$ of degree 
$r+1$. Note that the particular equations that need to be verified depend on $r = \rank(D(x))$ and thus on $x$ itself. 
This is not a fundamental issue, as we can stratify the space of candidate value functions by the rank. 

It is worth noting an equivalent condition that does not explicitly involve $\operatorname{rank}(D)$. Namely, the condition is equivalent to requiring that the projection of  $C$ onto $\operatorname{col}(D)^\bot$ vanishes. Recall that, for any matrix $A \in \mathbb{R}^{m \times n}$ and vector $b \in \mathbb{R}^n$, the component of $b$ orthogonal to the column space of $A$ is given by $(I-AA^+)b$, where $A^+$ denotes the Moore-Penrose pseudoinverse of $A$ \citep[see, e.g.,][Theorem 6.69]{axler}. 

In our setup, this condition becomes: 
\begin{equation} 
C^{\perp} = (I-DD^+)C = 0 . 
\label{eq:orth-zero}
\end{equation} 
At those $x$ where $D$ has full column rank or full row rank, the Moore-Penrose pseudo inverse $D^+$ can be written explicitly as $(D^\top D)^{-1}D^\top$ and $D^\top(DD^\top)^{-1}$, respectively. In these cases, using $A^{-1} = \det(A)^{-1}\operatorname{adj}(A)$, we can write (\ref{eq:orth-zero}) as a system of polynomial equations in $x$. 

\paragraph{Inequality Condition}
To elaborate the second condition, recall that a zonotope $Z(g_1,\dots, g_K)$ generated by vectors $g_1,\dots, g_K$ is 
$$
Z(g_1,\dots, g_K) = \Big\{\sum_{k=1}^K \alpha_kg_k : \alpha_k \in [-1, 1] \Big\}.
$$ 

A vector $h$ lies within the zonotope $Z(g_1,\dots,g_K)$ if and only if we can verify via the facet-defining inequalities that $w^\top h \leq \sum_k \vert w^\top g_k\vert $, where $w$ is orthogonal to a maximal set of linearly independent generators that span said facet (see Proposition~\ref{prop:zonotope_facet_inequalities}). 
Thus in the case of our parametric systems we have inequalities of the form 
\begin{equation} 
    w^\top (DD^+C) \leq \sum_k \vert w^\top D_k\vert , 
\label{eq:zon-ineq}
\end{equation} 
where $w$ is any column of $(I - D_ID_I^+)$, i.e., a vector orthogonal to the columns of $D_I$, and $D_I$ is a sub-matrix  of $D$ collecting linearly independent columns. 

Again, at those points $x$ where $D(x)$ has full column rank or full row rank, we can write $D^+$ explicitly and (\ref{eq:zon-ineq}) is a polynomial inequality in $x$. 

\subsection{Finite Description for the Bellman Equation} 

Finally, we rewrite the necessary and sufficient condition on feasible value functions given by Theorem~\ref{thm:value_functions} as a finite description by imposing a zonotope condition on the deviation and midpoint residuals of the parametrized Bellman equation.

\begin{theorem}[Zonotope Characterization of the Solution Set to the Bellman Equation] 
\label{thm:zonotope_mdp} 

Consider the parametrization of a POMDP given in eqs.~\ref{eq:eqs1a}-\ref{eq:eqs1b} and \ref{eq:eqs2a}-\ref{eq:eqs2b}. This system has deviation matrices $D_i$ with columns
\begin{align*}
    D_{i,(o,a)} &= v_{(o,a)}^\Delta (B_i^{(o,a)}x - c_i^{(o,a)})
\end{align*} 
and midpoint residuals $C_i$, with entries
\begin{align*}
    C_i &= B_i(v^c)x - c_i(v^c)
\end{align*}
where $v^c, v^\Delta$ are the centers and widths, respectively, of the parameters, all equal to $\tfrac12$. 

Then $x\in\mathbb{R}^\mathcal{S}$ is a feasible value function, meaning that it solves the Bellman equation $(I-\gamma P^\pi)x - r^\pi=0$ for some $\pi\in\Delta^\mathcal{O}_\mathcal{A}$,  if and only if there exist vectors $q^{(i)} \in \mathbb{R}^{\mathcal{O}\times\mathcal{A}_i}$ such that
\begin{align*}
     D_iq^{(i)} = C_i, &-1 \leq q^{(i)} \leq 1.
\end{align*}
\end{theorem}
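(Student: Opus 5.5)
The plan is to combine Proposition~\ref{prop:intersectionrev} with Proposition~\ref{col:zonotope-parametric}, applied separately to each rectangular parametrization \eqref{eq:eqs2a}--\eqref{eq:eqs2b}. As in Theorem~\ref{thm:value_functions}, we work under the assumption of Proposition~\ref{prop:intersectionrev}, so that $S=\bigcap_i S_i$. The first step is to record that $x$ is a feasible value function if and only if $(A,b)$ with $Ax=b$ can be chosen in $S=L(\Delta^{\mathcal O}_{\mathcal A})$. This holds because $(I-\gamma P^\pi, r^\pi)=L(\pi)$ by construction of \eqref{eq:eqs1a}--\eqref{eq:eqs1b}, and $I-\gamma P^\pi$ is invertible for every policy.

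Next we translate membership in each $S_i$ into a parametric system over a hyperrectangle. By definition, $(A,b)\in S_i$ if and only if $(A,b)=(B_i(v),c_i(v))$ for some $v\in[0,1]^{\mathcal O\times\mathcal A_i}$. This uses the substitution $p_{o,a_i}=1-\sum_{a\neq a_i}v_{o,a}$, which turns $A(p)$ into $B_i(v)$ and $b(p)$ into $c_i(v)$; the computation is direct from the definitions. Applying Proposition~\ref{col:zonotope-parametric} to the system $B_i(v)x-c_i(v)=0$ with $p^c=v^c$ and $p^\Delta=v^\Delta$, $x$ solves some member of this family if and only if there is $q^{(i)}\in[-1,1]^{\mathcal O\times\mathcal A_i}$ with $C_i=D_iq^{(i)}$.

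The main obstacle is assembling the pieces, because the order of quantifiers matters. Feasibility requires a \emph{single} pair $(A,b)\in\bigcap_i S_i$ solving $Ax=b$. The zonotope conditions, by contrast, only give for each $i$ some pair in $S_i$, possibly a different one. Proposition~\ref{col:zonotope-parametric} alone cannot fix this, so the extra work goes here.

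The forward direction is straightforward. If $x=V^\pi$, then $L(\pi)\in S_i$ for every $i$, since $\pi\in E_i$ with $v_{o,a}=\pi(a|o)\in[0,1]$. Hence each zonotope condition holds.

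For the converse, I would proceed as in the paper's derivation of Theorem~\ref{thm:value_functions}, which combines Theorem~\ref{thm:hladik-necessary-sufficient} for each $i$ with $S=\bigcap_i S_i$. The idea is to argue that the solution sets $X_i=\{x:\exists (A,b)\in S_i,\ Ax=b\}$ satisfy $\bigcap_i X_i=\{x:\exists (A,b)\in\bigcap_i S_i,\ Ax=b\}$. I would first try to use the assumption of Proposition~\ref{prop:intersectionrev} together with injectivity-type reasoning on $L$ over $\bigcup_i E_i$. Concretely, given $v^{(i)}\in[0,1]^{\mathcal O\times\mathcal A_i}$ solving the $i$-th system, map each to $p^{(i)}\in E_i$. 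Each $p^{(i)}$ is then a row-sum-one matrix with $B$-type coordinates in $[0,1]$, i.e.\ nonnegative except possibly in column $a_i$. Suppose the $p^{(i)}$ can be shown to coincide, or the non-policy ones can be replaced by a common genuine policy using uniqueness of the solution $x=(I-\gamma P^p)^{-1}r^p$. Then that common $p$ lies in $\bigcap_i E_i=\Delta^{\mathcal O}_{\mathcal A}$, and $x=V^p$.

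If this reconciliation fails in general, I would state the converse exactly at the level of generality used for Theorem~\ref{thm:value_functions}. That is, I would show that the condition ``$D_iq^{(i)}=C_i$ for some $q^{(i)}\in[-1,1]$'' is equivalent, for each fixed $i$, to the $i$-th family of inequalities there. This follows by applying Proposition~\ref{col:zonotope-parametric} and Theorem~\ref{thm:hladik-necessary-sufficient} to the same parametric system $(B_i,c_i)$. The theorem then follows from Theorem~\ref{thm:value_functions}.
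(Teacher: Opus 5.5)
There is a genuine gap, and it is exactly the one you flagged. Neither of your remedies closes it, and in fact it cannot be closed.

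Your forward direction is correct, and so is the per-$i$ step (substitute $p=\ell_i(v)$, then apply Proposition~\ref{col:zonotope-parametric}). Together they show that the zonotope conditions say precisely $x\in\bigcap_i X_i$, where $X_i=\{x:\exists p\in E_i,\ A(p)x=b(p)\}$. Feasibility instead needs a single $p\in\bigcap_i E_i=\Delta^{\mathcal O}_{\mathcal A}$. The identity $S=\bigcap_i S_i$ concerns coefficient pairs and does not bridge this, because each $i$ may be witnessed by a different pair.

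Your reconciliation step is only a hope. Uniqueness of $V^p$ given $p$ says nothing about uniqueness of $p$ given $x$: the fiber $\{p: A(p)x=b(p),\ \sum_a p_{o,a}=1\}$ has dimension at least $|\mathcal O|(|\mathcal A|-1)-|\mathcal S|$ when nonempty. Your fallback is circular. Theorem~\ref{thm:value_functions} is proved in the paper by the same conjunction over $i$ (Hladik's criterion characterizes each $X_i$ separately), so it inherits the gap. The paper's own proof of this statement is likewise just ``apply Proposition~\ref{col:zonotope-parametric} to each anchored system and impose all of them.'' Your proposal therefore reproduces the paper's argument, with the gap made explicit but not filled.

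The converse is in fact false once $|\mathcal A|\ge 3$ (so the anchor entry $p_{o,a_i}$ may be negative) and different observations prefer different actions. Take the following instance:
\begin{itemize}
\item $\mathcal S=\mathcal O=\{1,2\}$ and $\mathcal A=\{1,2,3,4\}$;
\item self-loops $\alpha(s,a;\cdot)=\delta_s$;
\item $\beta(1;\cdot)=(1,0)$ and $\beta(2;\cdot)=(\tfrac12,\tfrac12)$;
\item $r(1,\cdot)=(2,1,-1,-2)$ and $r(2,\cdot)=(0,2,4,3)$.
\end{itemize}
For any $p$ with unit row sums the system reads $(1-\gamma)x=r^p$. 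Moreover $r^p_1+r^p_2=\sum_a p_{1,a}c_a+\sum_a p_{2,a}d_a$ with $c=(2,2,1,-\tfrac12)$ and $d=(0,1,2,\tfrac{3}{2})$. These are maximized by different actions, so every feasible $x$ has $x_1+x_2\le 4/(1-\gamma)$.

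Yet $x=(1.55,2.55)/(1-\gamma)$ solves all four anchored systems. The witnesses, with rows indexed by $o$, are $p^{(1)}=\bigl(\begin{smallmatrix}0.55&0.45&0&0\\-0.1&0.1&1&0\end{smallmatrix}\bigr)$, $p^{(2)}=\bigl(\begin{smallmatrix}0.55&0.45&0&0\\0&-0.2&1&0.2\end{smallmatrix}\bigr)$, $p^{(3)}=\bigl(\begin{smallmatrix}0.35&0.75&-0.1&0\\0&0&1&0\end{smallmatrix}\bigr)$ and $p^{(4)}=\bigl(\begin{smallmatrix}0.25&0.85&0&-0.1\\0&0&0.7&0.3\end{smallmatrix}\bigr)$. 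Each lies in its $E_i$, since the only negative entry sits in the anchor column. So all zonotope conditions hold, but $x$ is infeasible.

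The failure persists in a 3-state variant in which the coefficient map is injective. Make state $3$ a copy of state $2$, use generic transitions, and re-tune the rewards so that at the target point $Q^x_a(1),Q^x_a(2)$ are as before and $Q^x_a(3)=Q^x_a(2)$. So assuming the hypothesis of Proposition~\ref{prop:intersectionrev} does not rescue the claim.

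What survives in general is one direction: the zonotope conditions are an outer description. Equality does hold in three cases:
\begin{itemize}
\item $|\mathcal A|=2$, where each $E_i=\Delta^{\mathcal O}_{\mathcal A}$;
\item $|\mathcal O|=1$, by anchoring, in each direction $w$, at the action maximizing $w^\top u_{o,a}(x)$;
\item the fully observable case (Appendix~\ref{app:reduction-MDP}).
\end{itemize}
A correct characterization must tie the $q^{(i)}$ to one common parameter in $\Delta^{\mathcal O}_{\mathcal A}$, as the formulation $C(x)p=f(x)$, $p\ge 0$ of Theorem~\ref{thm:quantifier_free_pomdp} does.
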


\begin{proof}
    We noted in Proposition~\ref{prop:intersectionrev} that the Bellman equation can be parametrized as described in 
    eqs.~\ref{eq:eqs2a}--\ref{eq:eqs2b}. 
    Since the solution set for each of these parametric systems is itself described by the result in Proposition~\ref{col:zonotope-parametric}, we can obtain our desired result by enforcing they all hold simultaneously. 
\end{proof}

In Figure~\ref{fig:linear_nonlinear_ineqs} we compare the characterizations given in Theorem~\ref{thm:zonotope_mdp} and Theorem~\ref{thm:hladik-necessary-sufficient}, which describe the feasible region for a POMDP with infinitely many piecewise linear inequalities and with finitely many nonlinear inequalities, namely the solutions to $\vert q^{(i)} \vert = 1$. 
Additional details on how we created Figures~\ref{fig:linear_ineqs} and \ref{fig:linear_nonlinear_ineqs} can be found in Appendix~\ref{app:details-experiments}.

\begin{restatable}{corollary}{semialgebraicset}
\label{col:semi_algebraic_set}
    For any POMDP, the solution set to the Bellman equation, which is the set of feasible value functions, is a semi-algebraic set. 
\end{restatable}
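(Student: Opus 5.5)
The cleanest route avoids the assumption of Proposition~\ref{prop:intersectionrev}, since the corollary is claimed for \emph{any} POMDP, and relies on the Tarski--Seidenberg theorem: the image of a semi-algebraic set under a polynomial map, or more generally a projection of a semi-algebraic set, is semi-algebraic. The plan is to exhibit the feasible set $\mathcal V$ as a coordinate projection of an explicitly semi-algebraic set in a higher-dimensional space. The finite zonotope description of Theorem~\ref{thm:zonotope_mdp} will serve only as a concrete illustration, not as the main argument.

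\textbf{Step 1.} Consider the graph
\[
G=\{(\pi,x)\in\mathbb{R}^{\mathcal O\times\mathcal A}\times\mathbb{R}^{\mathcal S}\colon \pi\in\Delta^{\mathcal O}_{\mathcal A},\ (I-\gamma P^\pi)x-r^\pi=0\}.
\]
The constraint $\pi\in\Delta^{\mathcal O}_{\mathcal A}$ is given by finitely many linear equalities ($\sum_a\pi(a|o)=1$) and inequalities ($\pi(a|o)\ge 0$). By \eqref{eq:eqs1a}--\eqref{eq:eqs1b}, the Bellman equation reads $A(\pi)x-b(\pi)=0$, where $A$ and $b$ are affine in $\pi$. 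It is therefore a system of $|\mathcal S|$ polynomial equations of degree at most two in the joint variables $(\pi,x)$, with real coefficients determined by $\alpha,\beta,r,\gamma$. Hence $G$ is a basic semi-algebraic set, and in fact is defined by finitely many polynomial equalities and inequalities.

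\textbf{Step 2.} Since $I-\gamma P^\pi$ is invertible for every $\pi$, the Bellman equation has a solution exactly when $x=V^\pi$. Thus $\mathcal V=\{x\colon \exists\,\pi,\ (\pi,x)\in G\}$, which is the image of $G$ under the coordinate projection onto $\mathbb{R}^{\mathcal S}$. Tarski--Seidenberg then yields that $\mathcal V$ is semi-algebraic.

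Alternatively, one can write $V^\pi=\operatorname{adj}(I-\gamma P^\pi)\,r^\pi/\det(I-\gamma P^\pi)$. This is a rational map with nonvanishing denominator on the polytope $\Delta^{\mathcal O}_{\mathcal A}$, as in \citet{mullerGeometryMemorylessStochastic2022}. The image of a polytope under such a map is semi-algebraic by the same theorem.

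\textbf{Step 3 (remark under the assumption of Proposition~\ref{prop:intersectionrev}).} When the assumption of Proposition~\ref{prop:intersectionrev} holds, Theorem~\ref{thm:zonotope_mdp} gives a more explicit description. The condition is $\exists\, q^{(i)}\in[-1,1]^{\mathcal O\times\mathcal A_i}$ with $D_i(x)q^{(i)}=C_i(x)$, where the entries of $D_i$ and $C_i$ are affine in $x$. This is again a projection of a semi-algebraic set in $(x,q)$, defined by bilinear equations and box inequalities. Quantifier elimination can be made concrete here through the rank stratification and the zonotope facet inequalities \eqref{eq:zon-ineq}, using determinants and adjugates on each stratum.

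The main obstacle is none of the algebra; it is the hypothesis. The zonotope route requires the injectivity-type assumption, whereas the projection of $G$ does not. I would therefore present the Tarski--Seidenberg argument as the proof and treat the zonotope description as the explicit instance.
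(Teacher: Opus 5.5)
Your argument is correct, but it projects a different auxiliary set than the paper does.

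\textbf{The paper's route.} The paper starts from Theorem~\ref{thm:zonotope_mdp}. It forms the set of pairs $(x,q)$ with $D_i(x)q^{(i)}=C_i(x)$ and $-1\le q^{(i)}\le 1$ for every anchor action $a_i$. It notes that the entries of $D_i$ and $C_i$ are polynomial (in fact affine) in $x$, and then applies Tarski--Seidenberg to the projection onto $x$.

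\textbf{Your route.} You project the graph $G$ of the Bellman equation over the policy polytope itself. Structurally the two arguments are the same kind: bilinear equations plus linear inequalities, followed by a projection. Indeed, $q^{(i)}$ is just the affine rescaling $v=\tfrac12(1+q)$ of the box parameters on $E_i$.

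\textbf{What the reduction costs.} The difference lies in what each reduction requires. The paper's set equals the feasible set only to the extent that Theorem~\ref{thm:zonotope_mdp} holds, and that theorem rests on the intersection-preservation hypothesis of Proposition~\ref{prop:intersectionrev}, as you noticed. There is also a further point. The $q^{(i)}$ are chosen independently for each anchor, so that route needs the solution set over $\Delta^{\mathcal O}_{\mathcal A}=\bigcap_i E_i$ to coincide with the intersection of the solution sets over the individual $E_i$. The coefficient-level identity $S=\bigcap_i S_i$ gives only one inclusion here, because a point may solve each anchored system with a different coefficient pair.

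Your set $G$, or equally the rational-map image of the polytope, needs none of this. It is therefore the argument that actually delivers the claim for any POMDP. It is also exactly Step~1 of the paper's Theorem~\ref{thm:quantifier_free_pomdp}, namely $C(x)p=f(x)$ with $p\ge 0$; that theorem then eliminates $p$ explicitly rather than invoking Tarski--Seidenberg.

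\textbf{What the paper's route buys.} It links semi-algebraicity to the zonotope picture, whose boundary pieces $\vert q^{(i)}_j\vert=1$ are the curves plotted in the figures. For the bare semi-algebraicity statement, both routes are pure existence arguments, and the paper's adds nothing beyond yours.
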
 

\begin{proof}
    To be found in Appendix~\ref{proof:semi_algebraic}. 
\end{proof}

Theorem~\ref{thm:zonotope_mdp} presents a description of the solution set to the Bellman equation in terms of the existence of certain coefficients. 
Now we provide a finite description of the solution set in terms of a finite list of explicit polynomial equations and inequalities. 
This construction provides additional insight into the number of constraints needed to form the feasible set, as well as the degree of those constraints, which are polynomial in the value function.  

\begin{restatable}[Semi-algebraic description of the solution set to the Bellman equation]{theorem}{thmquantifierfreepomdp}
\label{thm:quantifier_free_pomdp}
\label{thm:determinant-bellman-equation}
Consider a POMDP with finite state space $\mathcal S$, observation space $\mathcal O$, and action space $\mathcal A$. Fix orderings of $\mathcal O$ and $\mathcal A$, and write a stochastic memoryless policy as a vector $P\in\mathbb R^{\vert \mathcal O\vert \vert \mathcal A\vert }$ with entries satisfying $P(a| o)\ge 0$, $\sum_{a\in\mathcal A}P(a | o)=1$ for all $o\in\mathcal O$.

For each $(o,a)\in\mathcal O\times\mathcal A$, define $u_{o,a}(x)\in\mathbb R^{\vert \mathcal S\vert }$ by 
\[
u_{o,a}(x)_s = \beta(s;o) \left(\gamma\sum_{s'\in\mathcal S}\alpha(s,a;s')x_{s'}+ r(s,a) \right).
\]

Let $d(x)=x\in\mathbb R^{\vert \mathcal S\vert }$, and let $R\in\mathbb R^{\vert \mathcal O\vert \times \vert \mathcal O\vert \vert \mathcal A\vert }$ be the matrix encoding the row-sum constraints, so that  $(RP)_o=\sum_{a\in\mathcal A}P(a |  o)$.

Define $M(x) \in \mathbb R^{\vert \mathcal S\vert \times \vert \mathcal O\vert \vert \mathcal A\vert }$ as the matrix whose columns are $u_{o,a}(x)$, and set

\[
C(x) = \begin{pmatrix} M(x)\\ R \end{pmatrix},
\qquad
f(x) = \begin{pmatrix} d(x)\\ \mathbf 1_{\vert \mathcal O\vert } 
\end{pmatrix}.
\]

Then $x$ is a feasible value function if and only if there exists $P\in\mathbb R^{\vert \mathcal O\vert \vert \mathcal A\vert }$ such that $C(x)P=f(x), P\ge 0$.

Moreover, the solution set $S$ admits the following quantifier-free semi-algebraic description as a finite union stratified by $\rank C(x) = \rho$:
\begin{align*}
S
=
\bigcup_{\rho=1}^{\vert \mathcal S \vert + \vert \mathcal O \vert }\bigcup_{\substack{I\subseteq\{1,\dots,\vert \mathcal S \vert + \vert \mathcal O \vert \}\\ \vert I\vert =\rho}}\bigcup_{\substack{B\subseteq\{1,\dots,\vert \mathcal O \vert \vert \mathcal A \vert\}\\ \vert B\vert =\rho}}
S_{\rho,I,B},
\end{align*}
where $S_{\rho,I,B}$ is the set of all $x\in\mathbb R^{\mathcal S}$ satisfying the following polynomial inequalities:
\begin{enumerate}
    \item[(i)] $\det C_{I,B}(x)\neq 0$ and every
    $(\rho+1)\times(\rho+1)$ minor of $C(x)$ vanishes;
    \item[(ii)] every $(\rho+1)\times(\rho+1)$ minor of the augmented
    matrix $[C(x) |  f(x)]$ vanishes;
    \item[(iii)] for all $t=1,\dots,\rho$, $\det C_{I,B,t}(x)\,\det C_{I,B}(x)\ge 0$.
\end{enumerate}
Here, for index sets $I\subseteq\{1,\dots,\vert \mathcal S \vert + \vert \mathcal O \vert \}$ and $B\subseteq\{1,\dots,\vert \mathcal O \vert\vert \mathcal A \vert\}$ with $\vert I\vert =\vert B\vert =\rho$, the matrix $C_{I,B}(x)$ denotes the $\rho\times\rho$ submatrix of $C(x)$ with row set $I$ and column set $B$. $C_{I,B,t}(x)$ is obtained from $C_{I,B}(x)$ by replacing its $t$-th column by $f_I(x)$, the corresponding subvector of $f(x)$.
\end{restatable}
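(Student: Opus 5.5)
The plan has two parts. First I rewrite the Bellman equation as a linear feasibility problem in the policy entries with coefficients depending on $x$. Then I eliminate the existential quantifier over $P$ using basic feasible solutions from linear programming together with Cramer's rule.

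\textbf{Step 1 (linear feasibility reformulation).} Expand $(I-\gamma P^\pi)x=r^\pi$ using \eqref{eq:transition-kernel} and \eqref{eq:reward-vector}, with $(\pi\circ\beta)(a|s)=\sum_o P(a|o)\beta(s;o)$. This gives, for each $s$,
\[
x_s=\sum_{(o,a)} P(a|o)\,\beta(s;o)\Big(\gamma\sum_{s'}\alpha(s,a;s')x_{s'}+r(s,a)\Big)=\big(M(x)P\big)_s .
\]
So $x$ solves the Bellman equation for the policy $P$ exactly when $M(x)P=d(x)$. The requirement that $P$ is a policy is $RP=\mathbf 1_{|\mathcal O|}$ together with $P\ge 0$. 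Stacking these gives $C(x)P=f(x)$, $P\ge0$. The equivalence holds in both directions, since every such $P$ is a policy and the Bellman solution is unique for $\gamma<1$.

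\textbf{Step 2 (sufficiency of the strata).} Fix $x\in S_{\rho,I,B}$. By (i), $\rank C(x)=\rho$ and the rows of $C(x)$ indexed by $I$ are linearly independent. By (ii), $\rank[C(x)|f(x)]\le\rho$, so $f(x)$ lies in $\operatorname{col}(C(x))$. Moreover every row of $[C|f]$ is a combination of the rows in $I$, because those rows are independent and already span a $\rho$-dimensional row space. Hence the full system $CP=f$ is equivalent to its subsystem on the rows $I$. Define $P_B=C_{I,B}^{-1}f_I$ and $P_j=0$ for $j\notin B$. This $P$ solves the $I$-rows, and therefore the whole system. By Cramer's rule, $(P_B)_t=\det C_{I,B,t}/\det C_{I,B}$, which has the sign of $\det C_{I,B,t}\det C_{I,B}$. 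Condition (iii) then gives $P\ge0$, so $x$ is feasible.

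\textbf{Step 3 (necessity, the main obstacle).} Suppose $CP=f$ has a solution $P\ge0$. Let $\rho=\rank C(x)$; this is at least $1$ because $R\neq0$, and at most $|\mathcal S|+|\mathcal O|$.
\begin{itemize}
\item By the fundamental theorem of linear programming (Carath\'eodory for cones), there is a basic feasible solution $P^*\ge0$ whose support columns are linearly independent.
\item Extend this support to a set $B$ of exactly $\rho$ linearly independent columns of $C(x)$. Choose rows $I$, $|I|=\rho$, with $\det C_{I,B}\ne0$; this is possible since $C_{:,B}$ has rank $\rho$. Then (i) holds.
\item Condition (ii) holds because $f=CP^*$ lies in the column space of $C$.
\item Since $P^*$ is supported in $B$, it satisfies $C_{I,B}P^*_B=f_I$. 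As $C_{I,B}$ is invertible, $P^*_B=C_{I,B}^{-1}f_I$. Cramer's rule together with $P^*_B\ge0$ gives (iii).
\end{itemize}
The care needed is in the basic-solution extraction and the column extension, in particular checking that zero entries in the extended basis are harmless. They are, because (iii) is a non-strict inequality.

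Finally, every condition in (i)–(iii) is a polynomial equation or inequality in $x$, since the entries of $C(x)$ and $f(x)$ are affine in $x$. The union over $\rho,I,B$ is finite, so this yields the quantifier-free semi-algebraic description.
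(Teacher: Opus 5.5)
Your proposal is correct and follows the paper's proof. Both arguments rewrite the Bellman equation as the feasibility system $C(x)P=f(x)$, $P\ge 0$, reduce it to the rows $I$ using (i)--(ii), extract a basic feasible solution, extend its support to a column basis $B$, and apply Cramer's rule to get (iii). The paper proves the basic-feasible-solution step by explicit support reduction, whereas you cite conic Carath\'eodory. The only other difference is one of ordering: you fix $B$ and then choose $I$, while the paper fixes $I$ first and extends the support within $C_{I,*}$. Both orderings produce a valid triple $(\rho,I,B)$.
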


\begin{proof}
    To be found in Appendix~\ref{app:semi}. 
\end{proof}

\begin{figure}
    \centering
    \includegraphics[width=0.9\linewidth]{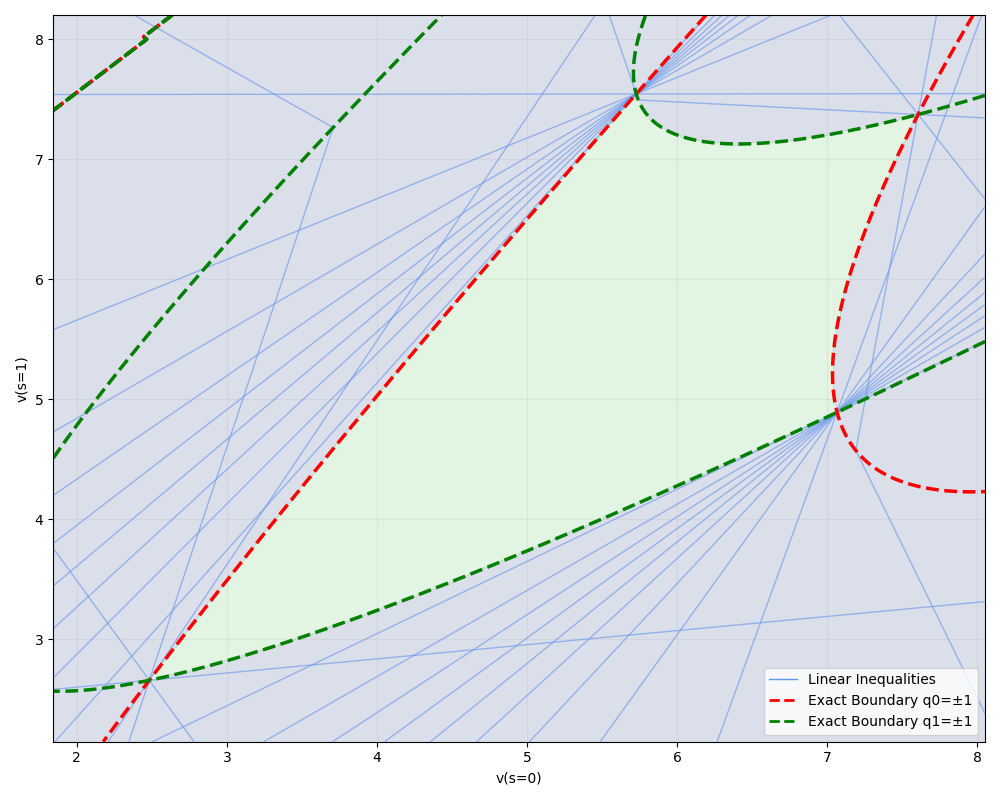}
    \caption{We are able to identify the feasible space of value functions for the POMDP with only four curves using Theorem~\ref{thm:zonotope_mdp}, namely the solutions to $\vert q^{(1)} \vert, \vert q^{(2)} \vert = 1$, whose solutions are plotted as the red and green dashed curves. 
    The faint blue lines are samples from the infinitely many piecewise linear inequalities needed to specify the feasible space under Theorem~\ref{thm:value_functions}.}
    \label{fig:linear_nonlinear_ineqs}
\end{figure}

Theorem~\ref{thm:quantifier_free_pomdp} provides a finite description of the solution set to the Bellman equation,  meaning an explicit semi-algebraic description of the solution set in terms of finitely many polynomial equations and inequalities. An example of the polynomial conditions we obtain in this way is described in Appendix~\ref{subsec:theorem_4.4_example}.

\begin{remark} 
    Nonconvex polyhedra, which admit a description as finite unions of intersections of finitely many halfspaces, are also semi-algebraic sets. 
    The polyhedral characterization of the solution set of the Bellman equation for fully observable MDPs provided by \citet{dadashiValueFunctionPolytope2019} can be seen as a special case of our characterization for (PO)MDPs.  
\end{remark} 

\section{Implications and Examples}  

Given Theorem~\ref{thm:zonotope_mdp}, the boundary of the feasible set of value functions can be computed exactly as the solution set of the equation \( D q = C \). Since the objective of reinforcement learning is to identify an optimal policy, and optimal policies correspond to boundary points of the feasible value function set, understanding this boundary geometry provides insight into the structure of policy optimization problems.

\paragraph{Linear Programming Fails to Reach Optima in POMDPs}
In fully observable MDPs, the optimal policy is independent of the initial state distribution \(\rho\) and can be obtained by maximizing the linear objective $J = \sum_s \rho_s V^\pi(s)$ over the feasible set of value functions, for any choice of \(\rho\). 

Once a value function is optimal for one initial distribution, it is optimal for all choices of \(\rho\) \citep[Section~5.9.1]{putermanMarkovDecisionProcesses2005}. 

In contrast, no such invariance holds in partially observable MDPs. Figure~\ref{fig:initial_state_distro} illustrates that in POMDPs, optimal policies need not be unique and may depend sensitively on the choice of the initial state distribution. As \(\rho\) varies, both the identity and the multiplicity of optimal policies may change, reflecting the nonlinear geometry of the feasible set.

The geometric structure of this feasible set provides a natural lens through which to identify regions of initial state distributions that induce the same optimal policy, as well as to characterize the presence and number of isolated local maximizers of the long-term reward.

\paragraph{Investigating Optimization Dynamics via Value and Policy Spread}
To illustrate this point, we conducted a systematic study across randomly generated POMDPs. For each configuration (S, A, O), we sampled independent instances and ran 50 random restarts of policy gradient over memoryless stochastic policies. Table~\ref{tab:expA} in Appendix~\ref{app:details-experi} reports for various configurations aggregate statistics over instances, including the spread of achieved values across restarts, the fraction of materially suboptimal runs (gap $> 0.01$), and the spread of the resulting policies.

Across all configurations, we consistently observe that partial observability induces a substantial spread in both value and policy space, whereas the fully observable baseline exhibits negligible spread.  

\paragraph{Multiple Optima in the Optimization Landscape}
To more directly quantify the multi-optima structure, we ran another experiment where for each initial state distribution $\rho$, we clustered the converged value vectors. The number of clusters directly estimates the number of distinct local optima reached. Table~\ref{tab:expB} in Appendix~\ref{app:details-experi} reveals that multiple distinct local optima are the norm rather than the exception. The fraction of (instance, $\rho$) pairs exhibiting multiple optima increases with $S$ as does the mean number of distinct local optima. 

\paragraph{Extension to Finite Memory Policies}
To further investigate whether the observed optimization landscape is an artifact of restricting to memoryless policies, we extended our experiments to include finite-memory policies implemented via observation enhancement. 

Table~\ref{tab:expC-1} in Appendix~\ref{app:details-experi} shows a consistent pattern across all tested configurations: Increasing memory reduces the value spread but partial observability still induces significant spread.

This is consistent with the semi-algebraic characterization of the feasible value set. Increasing $k$ enlarges the policy class and the set of feasible value functions, which can mitigate some optimization difficulties, reflected in the reduced spread. However, the geometric complexity induced by partial observability persists, as evidenced by the remaining variability compared to the fully observable case. 

These observations are consistent with our theoretical results stating that, under partial observability, the set of achievable value functions induced by memoryless policies forms a semi-algebraic set with nontrivial geometry, including curved boundaries, which can give rise to multiple local optima of different values. 

\begin{figure}
    \centering
    \includegraphics[width=0.85\linewidth]{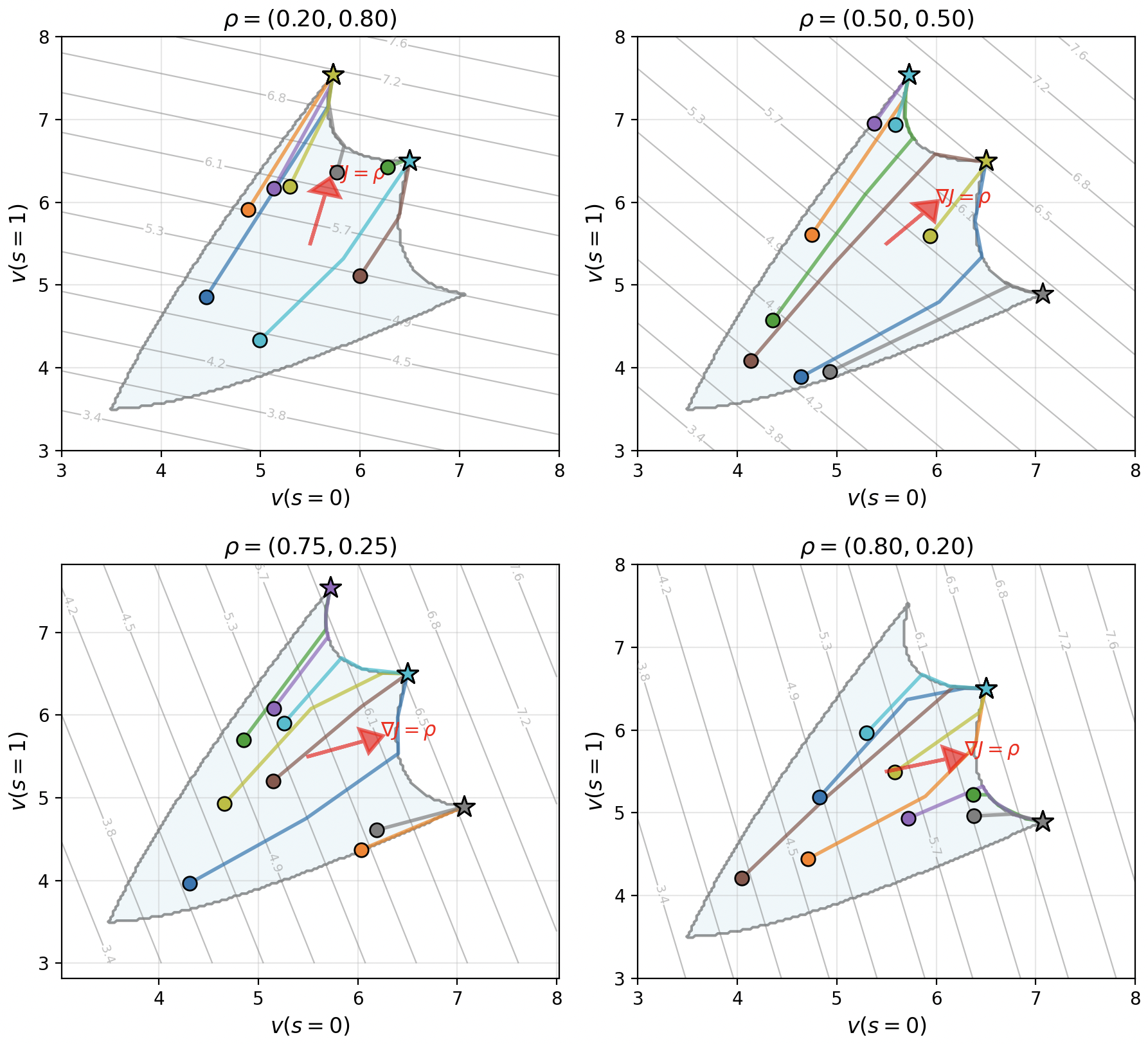}
    \caption{Above we sample eight feasible policies in the same POMDP with different choices of state distribution $\rho$ in each plot. Optimal policies, which differ between choices of $\rho$, are denoted with a star, while the red arrow corresponds to the steepest ascent direction for the objective. The grey dashed lines are level sets of the objective $J = c$. Here, $\rho$ represents an initial state distribution.}
    \label{fig:initial_state_distro}
\end{figure}

\section{Conclusion}

In this work, we developed a geometric characterization of the set of feasible value functions for infinite-horizon POMDPs under memoryless stochastic policies. By showing that this set admits an explicit semi-algebraic description, we extend classical polyhedral results for fully observable MDPs to the partially observable setting, where feasibility is governed by fundamentally nonlinear constraints. This perspective clarifies qualitative differences between MDPs and POMDPs, including the dependence of optimal policies on the initial state distribution and the emergence of isolated local optima. More broadly, our results suggest that geometric structure plays a central role in understanding optimization and learning in partially observable environments.

Our analysis is restricted to memoryless stochastic policies and discounted infinite-horizon settings. While this restriction enables a tractable and explicit geometric characterization, extending the framework to policies with memory or belief-states remains an important open direction. A further limitation is that our results are primarily structural and do not directly translate into efficient algorithms for policy optimization in POMDPs; developing methods that exploit the identified geometry is therefore a natural next step. In particular, a more refined understanding of the semi-algebraic constraints and the development of computationally scalable approximations warrant further investigation. More broadly, elucidating how this semi-algebraic structure can inform algorithm design, convergence guarantees, and robustness analysis is a promising direction for future work.

\section*{Acknowledgment} 

We would like to thank Johannes M\"uller for insightful discussions and valuable input during the initial development of this project. 
This project has been supported by 
NSF grant DMS-2522495. 
GM was partially supported by 
DARPA AIQ grant HR00112520014, 
NSF grants 
DMS-2145630, 
CCF-2212520, 
DFG SPP 2298 grant 464109215, 
and BMFTR in DAAD project 57616814 (SECAI).

\section*{Impact Statement}

This work contributes to the mathematical foundations of machine learning through the study of the semi-algebraic geometry of value function sets in partially observable Markov decision processes. A deeper structural understanding of reinforcement learning systems may support future advances in optimization, verification, robustness, and interpretability of sequential decision-making models. The present work is theoretical in nature and does not involve deployment-oriented systems or human-subject data. We do not anticipate direct negative societal consequences stemming from the results presented here.

\bibliography{main}
\bibliographystyle{icml2026}

\newpage

\appendix

\onecolumn

\section*{Appendix Contents} 

This appendix is organized as follows: 
\begin{itemize}
    \item Appendix \ref{subsec:proofs_3}: Proofs of Results in Section \ref{sec:solution-set}
    
    \item Appendix \ref{app:reduction-MDP}: Reduction to MDPs 
    
    \item Appendix \ref{subsec:proofs_4}: Proofs of Results in Section \ref{sec:finite-solution-set}

    \item Appendix \ref{app:semi}: Semi-algebraic Description of the Solution Set to the Bellman Equation 

    \item Appendix \ref{app:details-experiments}: Numerical Illustrations

    \item Appendix \ref{app:details-experi}: Details on Experiments     
\end{itemize}

\newpage 
\section{Proofs of Results in Section \ref{sec:solution-set}}
\label{subsec:proofs_3}

\remhladikfacets* 

\begin{proof}[Details on Remark~\ref{rem:hladik-enclosure-facets}]
To see this, note that the stated condition comprises $m$ inequalities $0 \leq F_i(x) := \sum_k p_k^\Delta \vert A^k_{i:}x - b^k_i\vert  - \vert A(p^c)_{i:}x-b(p^c)_i\vert $, $i=1,\ldots, m$. 
Each of these inequalities verifies the non-negativity of a continuous piecewise linear function $F_i$. This function has one linear piece for each possible sign of the terms inside the absolute values, $r\in\{+1,-1\}^{K+1}$. 
Its solution set is the union of the solution sets over all linear pieces, $S_i = \cup_{r} P_{i,r}$. 
Each $P_{i,r}$ is a polyhedron defined by $K+1$ linear inequalities that determine the particular linear region of $F_i$, 
\begin{align*} 
    r_{1}(A^1_{i:} x-b^1_i)&\geq 0 \\
    &\vdots \\
    r_{K}(A^K_{i:} x-b^K_i)&\geq 0 \\
    r_{K+1}(A(p^c)_{i:} x -b(p^c)_i)&\geq 0, 
\end{align*} 
and one more linear inequality enforcing the non-negativity of $F_i$, 
$$
    \sum_{k=1}^Kp_k^\Delta r_k (A^k_{i:} x -b^k_i) - r_{K+1}(A(p^c)_{i:} x - b(p^c)_i) \geq 0 . 
$$
Of the $K+2$ inequalities that define $P_{i,r}$, only the latter one is a boundary of $S_i$. 
\end{proof} 

\paragraph{Parametrization of coefficients}
Consider the affine linear map 
$$
L \colon \mathbb{R}^{\mathcal{O}\times\mathcal{A}} \to \mathbb{R}^{\mathcal{S}\times\mathcal{S}}\times\mathbb{R}^{\mathcal{S}};\quad 
p\mapsto (A(p),b(p)) 
$$
defined by \eqref{eq:eqs1a}--\eqref{eq:eqs1b} as 
\begin{align*}
    A(p) &= A^{0} + \sum_{(o,a)\in\mathcal{O}\times\mathcal{A}} A^{(o,a)} p_{o,a}, \quad \text{where} \\ 
    (A^0)_{s,s'} &= I_{s,s'}, \nonumber \\
    (A^{(o,a)})_{s,s'} &= -\gamma\alpha(s,a;s')\beta(s;o), \quad (o,a)\in\mathcal{O}\times\mathcal{A} , \nonumber \\
    \intertext{and} b(p) &= b^{0} + \sum_{(o,a)\in\mathcal{O}\times\mathcal{A}} b^{(o,a)} p_{o,a}, \quad \text{where} \\ 
    (b^{0})_s &= 0, \nonumber\\
    (b^{(o,a)})_s &= r(s,a) \beta(s;o), \quad (o,a)\in\mathcal{O}\times\mathcal{A} . 
\end{align*}
Denote the image of the policy polytope under this map as $S := L(\Delta^{\mathcal{O}}_{\mathcal{A}})$. 

We want to represent the same image set $S$ as the intersection of the images of affine linear maps evaluated on hyperrectangles. 
Suppose that $\mathcal{A} = \{a_1,\ldots, a_{\vert \mathcal{A}\vert }\}$ and let 
$\mathcal{A}_i : =\mathcal{A}\setminus\{a_i\}$, $i = 1,\ldots, \vert \mathcal{A}\vert $. 
For each $i$ consider the map 
$$
L_i \colon [0,1]^{\mathcal{O}\times \mathcal{A}_i} \to \mathbb{R}^{\mathcal{S}\times\mathcal{S}} \times\mathbb{R}^{\mathcal{S}}; \quad 
v \mapsto (B_i(v),c_i(v))
$$
defined by  
	\begin{align*}
		B_i(v) &= B_i^{0} + \sum_{(o,a)\in \mathcal{O}\times \mathcal{A}_i} B_i^{(o,a)} v_{o,a},\quad \text{where} \\
		(B_i^0)_{s,s'} &= (A^0)_{s,s'} + \sum_{o\in\mathcal O}  (A^{(o,a_i)})_{s,s'}, \\
		(B_i^{(o,a)})_{s,s'} &= (A^{(o,a)})_{s,s'} - (A^{(o,a_i)})_{s,s'}, \; (o,a) \in\mathcal O \times \mathcal A_i, 
		\intertext{and}
		c_i(v) &= c_i^{0} + \sum_{(o,a)\in \mathcal{O}\times \mathcal{A}_i} c_i^{(o,a)} v_{o,a},\quad \text{where} \\
		(c_i^{0})_s &= \sum_{o\in\mathcal O} (b^{(o,a_i)})_s, \\
		(c_i^{(o,a)})_s &= (b^{(o,a)})_s - (b^{(o,a_i)})_s, \; (o,a) \in \mathcal O \times \mathcal A_i . 
	\end{align*}
Denote the image as $S_i := L_i([0,1]^{\mathcal{O} \times \mathcal{A}_i })$. 
This can be represented as the image of $L$ composed with an affine map on a hyperrectangle as follows. 

\begin{proposition} 
We have that $S_i=L_i\bigl([0,1]^{\mathcal O\times\mathcal A_i}\bigr)=L(E_i)$, where 
$E_i := \ell_i([0,1]^{\mathcal{O}\times\mathcal{A}_i}) \subseteq \mathbb{R}^{\mathcal{O} \times \mathcal{A}}$ 
is the image of the affine map 
$\ell_i \colon [0,1]^{\mathcal O\times \mathcal A_i} \to  \mathbb{R}^{\mathcal{O}\times\mathcal{A}};\; v\mapsto p$ defined by 
\[
p_{o,a_i}=1-\sum_{a\neq a_i}v_{o,a}\quad \text{and} \quad 
p_{o,a}=v_{o,a}, \text{ for } a\neq a_i. 
\]
\end{proposition}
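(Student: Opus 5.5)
The plan is to show that $L_i = L\circ \ell_i$ as affine maps on $[0,1]^{\mathcal O\times\mathcal A_i}$. The set identity then follows immediately:
\[
S_i = L_i([0,1]^{\mathcal O\times\mathcal A_i}) = L(\ell_i([0,1]^{\mathcal O\times\mathcal A_i})) = L(E_i).
\]
The whole argument is therefore a direct substitution followed by collecting terms. It is enough to check the matrix part $B_i(v)=A(\ell_i(v))$ and the vector part $c_i(v)=b(\ell_i(v))$ separately.

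For the matrix part, fix $v$ and set $p=\ell_i(v)$. Split the sum defining $A(p)$ in \eqref{eq:eqs1a} into the terms with $a=a_i$ and the terms with $a\neq a_i$:
\[
A(p)=A^0+\sum_{o}A^{(o,a_i)}\Bigl(1-\sum_{a\neq a_i}v_{o,a}\Bigr)+\sum_{o}\sum_{a\neq a_i}A^{(o,a)}v_{o,a}.
\]
Collecting the constant terms gives $A^0+\sum_o A^{(o,a_i)}$, which is exactly $B_i^0$. Collecting the coefficient of each $v_{o,a}$ with $a\neq a_i$ gives $A^{(o,a)}-A^{(o,a_i)}$, which is exactly $B_i^{(o,a)}$. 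Hence $A(\ell_i(v))=B_i(v)$.

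The vector part is the same computation with $b^0=0$. Substituting $p=\ell_i(v)$ into \eqref{eq:eqs1b}, the constant term is $\sum_o b^{(o,a_i)}=c_i^0$, and the coefficient of $v_{o,a}$ is $b^{(o,a)}-b^{(o,a_i)}=c_i^{(o,a)}$. Hence $b(\ell_i(v))=c_i(v)$.

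There is no real obstacle here. The only point needing care is bookkeeping: the linear term $-\sum_{a\neq a_i}v_{o,a}$ coming from $p_{o,a_i}$ has to be attributed to the correct coefficient $B_i^{(o,a)}$, one observation $o$ at a time.

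Note that $E_i$ is not claimed to equal the policy polytope. It contains points with $p_{o,a_i}<0$, and this is why Proposition~\ref{prop:intersectionrev} needs its separate argument. That issue does not affect the present statement, which is purely the identity $L\circ\ell_i=L_i$ pushed forward to images.
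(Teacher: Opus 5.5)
Your proposal is correct and takes essentially the same approach as the paper: both substitute $p_{o,a_i}=1-\sum_{a\neq a_i}v_{o,a}$ into $A(p)$ and $b(p)$, regroup the coefficients into $A^{(o,a)}-A^{(o,a_i)}$ and $b^{(o,a)}-b^{(o,a_i)}$ to get $L\circ\ell_i=L_i$, and then pass to images. Your write-up is slightly more explicit about matching the constant terms $B_i^0$ and $c_i^0$, which the paper leaves implicit.
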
 
\begin{proof} 
Indeed, if \(a\neq a_i\), set \(p_{o,a}=v_{o,a}\), and set $p_{o,a_i}=1-\sum_{a\neq a_i}v_{o,a}$. 
Then 
\[
\sum_{a\in\mathcal A} A^{(o,a)}p_{o,a}
=
A^{(o,a_i)}
+
\sum_{a\neq a_i}
\bigl(A^{(o,a)}-A^{(o,a_i)}\bigr)v_{o,a},
\]
and similarly
\[
\sum_{a\in\mathcal A} b^{(o,a)}p_{o,a}
=
b^{(o,a_i)}
+
\sum_{a\neq a_i}
\bigl(b^{(o,a)}-b^{(o,a_i)}\bigr)v_{o,a}.
\]
Therefore $S_i=L_i\bigl([0,1]^{\mathcal O\times\mathcal A_i}\bigr)=L(E_i)$.
\end{proof} 

Next we show that, under an intersection preservation condition, the set $S$ is the intersection of $S_1,\ldots, S_{\vert \mathcal{A}\vert }$.

\propintersectionrev* 

\begin{proof}[Proof of Proposition~\ref{prop:intersectionrev}]
Note that 
\[
\Delta^\mathcal O_\mathcal A=\bigcap_i E_i.
\]
Indeed, each \(E_i\) imposes the constraints $\sum_{a\in\mathcal A}p_{o,a}=1$ and \(p_{o,a}\in[0,1]\) for all \(a\neq a_i\). 
Intersecting over all \(i\) therefore imposes \(p_{o,a}\in[0,1]\) for every \(a\), together with the normalization constraint.  Hence the intersection is exactly
$\Delta^\mathcal O_\mathcal A$. 

It follows immediately that
\[
S
=
L\bigl(\Delta^\mathcal O_\mathcal A\bigr)
=
L\left(\bigcap_i E_i\right).
\]
Since \(\Delta^\mathcal O_\mathcal A\subseteq E_i\) for every \(i\), we get
\[
S\subseteq \bigcap_i L(E_i)=\bigcap_i S_i.
\]

For the reverse inclusion, let
\[
y\in \bigcap_i S_i.
\]
Then \(y\in L(E_i)\) for every \(i\). By the assumed intersection-preservation property,
\[
y\in \bigcap_i L(E_i)=L\left(\bigcap_i E_i\right).
\]
Therefore
\[
y\in L\bigl(\Delta^\mathcal O_\mathcal A\bigr)=S.
\]
Thus
\[
\bigcap_i S_i\subseteq S.
\]
Combining both inclusions gives $S=\bigcap_i S_i$. 
\end{proof}

The above proposition uses an intersection preservation condition. Next we discuss when this condition holds. 
Let $T\colon \mathbb{R}^{\mathcal{O}\times\mathcal{A}} \to \mathbb{R}^{\mathcal{S}\times\mathcal{S}}\times\mathbb{R}^{\mathcal{S}}$ be the linear map defined by 
\[
Tz
=
\sum_{(o,a)\in\mathcal O\times\mathcal A}
\bigl(A^{(o,a)},b^{(o,a)}\bigr)z_{o,a} . 
\]
Written explicitly in terms of $\alpha, \beta, r$, this is 
\begin{align*} 
(Tz)_{A,s,s'} =& -\gamma\sum_{(o,a)}
\alpha(s,a;s')\beta(s;o)z_{o,a},  \\ 
(Tz)_{b,s} =& \sum_{(o,a)} r(s,a)\beta(s;o)z_{o,a}.
\end{align*} 
The condition $L(E_i)\cap L(E_j)=L(E_i\cap E_j)$ holds if and only if, for every $p\in E_i$ and $q\in E_j$ with
\[
T(p-q)=0,
\]
there exists \(z\in\ker T\) such that
\[
p+z\in \Delta^\mathcal O_\mathcal A.
\]
In particular, the condition holds whenever the map $T$ is injective. 

We show that when $\vert \mathcal O\vert \leq \vert \mathcal S\vert $ and $\vert \mathcal A\vert \leq \vert \mathcal S\vert +1$, the map $T$ is generically injective, and thus the assumption of Proposition~\ref{prop:intersectionrev} holds. 

\begin{proposition}[Generic injectivity of the coefficient map]

Assume that 
$\vert \mathcal O\vert \leq \vert \mathcal S\vert $ and $\vert \mathcal A\vert \leq \vert \mathcal S\vert +1$. 
Then, for generic choices of $\beta$, $\alpha$, and $r$, the map $T$ has trivial kernel and hence is injective. 
\end{proposition}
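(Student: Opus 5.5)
The plan is to factor $T$ into two linear maps and show that each one is generically injective.

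Write $z\in\mathbb{R}^{\mathcal O\times\mathcal A}$ as an $\vert\mathcal O\vert\times\vert\mathcal A\vert$ matrix. Let $\beta$ also denote the $\vert\mathcal S\vert\times\vert\mathcal O\vert$ matrix with entries $\beta(s;o)$, and set $w:=\beta z\in\mathbb{R}^{\mathcal S\times\mathcal A}$, so that $w_{s,a}=\sum_o\beta(s;o)z_{o,a}$. By the explicit formulas for $T$,
\[
(Tz)_{A,s,\cdot}=-\gamma\sum_{a}\alpha(s,a;\cdot)\,w_{s,a},\qquad (Tz)_{b,s}=\sum_a r(s,a)\,w_{s,a}.
\]
Hence $T=G\circ(z\mapsto\beta z)$, where $G$ acts separately on each row $w_{s,\cdot}\in\mathbb{R}^{\mathcal A}$. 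On row $s$ it multiplies by the $(\vert\mathcal S\vert+1)\times\vert\mathcal A\vert$ matrix $N_s$ whose $a$-th column is $\bigl(-\gamma\alpha(s,a;\cdot),\,r(s,a)\bigr)$. It therefore suffices to prove two facts for generic parameters: each $N_s$ has full column rank, and $\beta$ has full column rank. Then $Tz=0$ forces $w_{s,\cdot}=0$ for every $s$, hence $\beta z=0$, hence $z=0$.

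For the rank statements I use the standard genericity argument. The parameter space is a product of simplices (for $\alpha$ and $\beta$) and a Euclidean space (for $r$). Its affine hull is irreducible, and the simplices have nonempty relative interior. A maximal minor of $N_s$ (using $\vert\mathcal A\vert\le\vert\mathcal S\vert+1$) or of $\beta$ (using $\vert\mathcal O\vert\le\vert\mathcal S\vert$) is a polynomial in the parameters. If it is not identically zero on the affine hull, its zero set is a proper algebraic subset, so it is nowhere dense in the parameter space. The complement of the finite union of these zero sets, one for each $s$ and one for $\beta$, is then open and dense, and injectivity holds on it.

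The step that needs care is producing witnesses that respect the stochasticity constraints, since these constraints rule out naive choices of generic matrices. For $\beta$, take $\beta(s_j;\cdot)=e_{o_j}$ for $j=1,\dots,\vert\mathcal O\vert$ and arbitrary rows for the remaining states. The rows indexed by $s_1,\dots,s_{\vert\mathcal O\vert}$ then form an identity block, so $\beta$ has rank $\vert\mathcal O\vert$. For $N_s$, take $\alpha(s,a_k;\cdot)=e_{s_k}$ with $r(s,a_k)=0$ for $k\le\min(\vert\mathcal A\vert,\vert\mathcal S\vert)$. If $\vert\mathcal A\vert=\vert\mathcal S\vert+1$, additionally take $\alpha(s,a_{\vert\mathcal S\vert+1};\cdot)=e_{s_1}$ and $r(s,a_{\vert\mathcal S\vert+1})=1$. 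The resulting columns $(-\gamma e_{s_1},0),\dots,(-\gamma e_{s_{\vert\mathcal S\vert}},0),(-\gamma e_{s_1},1)$ are linearly independent because $\gamma>0$. This choice respects the constraints that $\alpha(s,a;\cdot)\in\Delta_{\mathcal S}$ and $\beta(s;\cdot)\in\Delta_{\mathcal O}$.

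These witnesses show that each relevant minor is a nonzero polynomial on the affine hull of the parameter space, which completes the argument.
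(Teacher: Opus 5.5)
Your argument is correct and is essentially the paper's proof. You factor $T$ through $z\mapsto\beta z$, use per-state linear independence of the vectors $\bigl(-\gamma\alpha(s,a;\cdot),\,r(s,a)\bigr)$ (needing $\vert\mathcal A\vert\le\vert\mathcal S\vert+1$) and full column rank of $\beta$ (needing $\vert\mathcal O\vert\le\vert\mathcal S\vert$), and obtain genericity from the nonvanishing of suitable minors. Your explicit stochastic witnesses at simplex vertices, together with your use of $\gamma>0$, are a useful addition, since the paper only asserts that these minors are not identically zero on the constrained parameter space.
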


\begin{proof}
Write an element \(z\in\mathbb R^{\mathcal O\times\mathcal A}\) as a matrix
\[
z=(z_{o,a})_{o\in\mathcal O, a\in\mathcal A}.
\]
For each state \(s\in\mathcal S\) and action \(a\in\mathcal A\), define
\[
u_{s,a}
\coloneqq
\sum_{o\in\mathcal O}\beta(s;o)z_{o,a}.
\]
Thus, for each fixed \(s\), the vector
\[
u_s=(u_{s,a})_{a\in\mathcal A}\in\mathbb R^{\mathcal A}
\]
records the \(\beta\)-weighted action coordinates of \(z\) at state \(s\).

Now define, for each \(s\in\mathcal S\), the vectors
\[
w_{s,a}
\coloneqq
\bigl(-\gamma\alpha(s,a;\cdot),\, r(s,a)\bigr)
\in \mathbb R^{\vert \mathcal S\vert +1}.
\]
That is,
\[
w_{s,a}
=
\bigl(
-\gamma\alpha(s,a;s_1),
\ldots,
-\gamma\alpha(s,a;s_{\vert \mathcal S\vert }),
r(s,a)
\bigr).
\]
Then the condition \(Tz=0\) is equivalent to
\[
\sum_{a\in\mathcal A} u_{s,a} w_{s,a}=0
\qquad
\text{for every }s\in\mathcal S.
\]

Since \(\vert \mathcal A\vert \leq \vert \mathcal S\vert +1\), generic choices of \(\alpha\) and \(r\) make the vectors
\[
\{w_{s,a}:a\in\mathcal A\}
\]
linearly independent for each fixed \(s\). Therefore,
\[
\sum_{a\in\mathcal A} u_{s,a}w_{s,a}=0
\]
implies
\[
u_{s,a}=0
\qquad
\text{for all }a\in\mathcal A.
\]
Since this holds for every \(s\), we have
\[
\sum_{o\in\mathcal O}\beta(s;o)z_{o,a}=0
\qquad
\text{for all }s\in\mathcal S,\ a\in\mathcal A.
\]

Now fix an action \(a\). The vector
\[
z_{\cdot,a}=(z_{o,a})_{o\in\mathcal O}
\]
satisfies
\[
\beta z_{\cdot,a}=0.
\]
Because \(\vert \mathcal O\vert \leq \vert \mathcal S\vert \), a generic matrix
\[
\beta\in\mathbb R^{\mathcal S\times\mathcal O}
\]
has full column rank. Hence
\[
z_{\cdot,a}=0.
\]
This holds for every action \(a\), and therefore
\[
z=0.
\]
Thus \(\ker T=\{0\}\).

The asserted genericity follows because the failure of full column rank of $\beta$, and the failure of linear independence of each collection \(\{w_{s,a}:a\in\mathcal A\}\), are algebraic conditions given by the vanishing of suitable minors. Since these minors are not identically zero, their nonvanishing defines a Zariski open dense, hence generic, set of parameters.
\end{proof}

\newpage 
\section{Reduction to MDPs} 
\label{app:reduction-MDP}

Theorem~\ref{thm:value_functions} describes the solution set in terms of infinitely many piecewise linear inequalities. We show that in the special case of a fully observable MDP, this description consists only of finitely many piecewise linear inequalities.  

\begin{corollary}[Value functions of fully observable MDPs]
\label{cor:value_functions_mdp}
Consider a finite discounted MDP with state space $\mathcal S$, action space $\mathcal A=\{a_1,\ldots,a_{\vert \mathcal A\vert }\}$, transition kernel $\alpha(s,a;s')$, reward function $r(s,a)$, and discount factor $\gamma\in(0,1)$.

Assume that the hypothesis of Proposition~\ref{prop:intersectionrev} holds for the fully observable specialization
\[
    \mathcal O=\mathcal S,
    \qquad
    \beta(s;o)=\mathbf 1_{\{o=s\}} .
\]
For $x\in\mathbb R^{\mathcal S}$, define
\[
    Q_a^x(s)
    \coloneqq
    r(s,a)+\gamma\sum_{s'\in\mathcal S}\alpha(s,a;s')x_{s'} .
\]

Then $x\in\mathbb R^{\mathcal S}$ is a feasible value function, meaning that
there exists a policy $\pi\in \Delta^{\mathcal S}_{\mathcal A}$ such that
\[
    x_s
    =
    \sum_{a\in\mathcal A}\pi_{s,a}
    \left(
        r(s,a)+\gamma\sum_{s'\in\mathcal S}\alpha(s,a;s')x_{s'}
    \right)
    \qquad\text{for all }s\in\mathcal S,
\]
if and only if
\[
    x_s\in \operatorname{conv}\{Q_a^x(s):a\in\mathcal A\}
    \qquad\text{for every }s\in\mathcal S.
\]
Equivalently,
\[
    \min_{a\in\mathcal A} Q_a^x(s)
    \le x_s \le
    \max_{a\in\mathcal A} Q_a^x(s)
    \qquad\text{for every }s\in\mathcal S.
\]
\end{corollary}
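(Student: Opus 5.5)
The plan is to argue directly from the Bellman equation, which decouples state by state in the fully observable case, and then reconcile this with the framework of Theorem~\ref{thm:value_functions}. With $\mathcal O=\mathcal S$ and $\beta(s;o)=\mathbf 1_{\{o=s\}}$, the policy parameter $p_{o,a}$ is just $\pi_{s,a}$, and row $s$ of $(I-\gamma P^\pi)x=r^\pi$ reads
\[
x_s=\sum_{a}\pi_{s,a}\Bigl(r(s,a)+\gamma\sum_{s'}\alpha(s,a;s')x_{s'}\Bigr)=\sum_a \pi_{s,a}Q^x_a(s).
\]
The key observation is that row $s$ involves only the parameters $\pi_{s,\cdot}$, i.e.\ only the row $\pi_{s,\cdot}\in\Delta_{\mathcal A}$. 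The constraints on $\pi$ are a product of simplices over $s$. So, for fixed $x$, the system in the unknown $\pi$ splits into $|\mathcal S|$ independent problems: find $\pi_{s,\cdot}\in\Delta_{\mathcal A}$ with $\sum_a\pi_{s,a}Q^x_a(s)=x_s$.

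For the forward direction, if $x=V^\pi$ for some $\pi$, then the displayed identity exhibits $x_s$ as a convex combination of the $Q^x_a(s)$. For the converse, suppose $x_s\in\operatorname{conv}\{Q^x_a(s)\}$ for every $s$. We choose weights $\pi_{s,\cdot}\in\Delta_{\mathcal A}$ realizing $x_s$ at each $s$; concretely, a two-point combination of the argmin and argmax actions suffices. Assembling these rows gives $\pi\in\Delta^{\mathcal S}_{\mathcal A}$ with $(I-\gamma P^\pi)x=r^\pi$. Since $I-\gamma P^\pi$ is invertible for $\gamma\in(0,1)$, it follows that $x=V^\pi$. The equivalence with the min/max formulation is immediate, because the convex hull of finitely many reals is the interval between their minimum and maximum.

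Finally, I would remark on how this matches Theorem~\ref{thm:value_functions}, which is why the hypothesis of Proposition~\ref{prop:intersectionrev} is carried along. In the fully observable case each row of $B_i(v)x-c_i(v)$ depends only on $v_{s,\cdot}$. Consequently, the $y$-inequalities reduce to those with $y=\pm e_s$: a general $y$ gives an inequality implied by the coordinatewise ones, via the triangle inequality on the decoupled sum. The inequality with $y=\pm e_s$ is exactly the one-dimensional zonotope condition, which says that $x_s-Q^x_{a_i}(s)$ lies in $\sum_{a\neq a_i}[0,1]\,(Q^x_a(s)-Q^x_{a_i}(s))$. This yields finitely many piecewise linear inequalities.

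The main obstacle is this reduction step, namely showing rigorously that arbitrary $y$ add nothing beyond the coordinate directions. I would handle it by noting that the solution set is a product over $s$ of one-dimensional conditions, so the direct argument above already settles the characterization independently of Theorem~\ref{thm:value_functions}.
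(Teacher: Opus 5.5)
Your proposal is correct, and its main argument is more direct than the paper's. You work straight from the Bellman equation. For $\beta=I$, row $s$ reads $x_s=\sum_a\pi_{s,a}Q^x_a(s)$ and involves only the row $\pi_{s,\cdot}\in\Delta_{\mathcal A}$, so feasibility splits into $|\mathcal S|$ independent convex-combination problems.

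The paper instead derives the statement by specializing Theorem~\ref{thm:value_functions}:
\begin{enumerate}
\item It computes $B_i$ and $c_i$ and observes that each deviation vector $B_i^{(o,a)}x-c_i^{(o,a)}$ is supported on coordinate $o$.
\item It reduces the family of inequalities over all $y$ to the scalar inequalities at $y=\pm e_s$. This is exactly the one-line triangle-inequality step you describe, so the step you flag as the main obstacle is not really one.
\item It rewrites each scalar inequality as an interval via Proposition~\ref{prop:equivinreductiontoMDPinterval}.
\item It intersects these intervals over the anchor actions $a_i$ to reach $[\min_a Q^x_a(s),\max_a Q^x_a(s)]$. The anchor at the minimizing action supplies the lower bound and the anchor at the maximizing action supplies the upper bound.
\end{enumerate}
Only the paper's final step, that $x_s\in\operatorname{conv}\{Q^x_a(s)\}$ for all $s$ is equivalent to the Bellman equation for some $\pi$, coincides with your whole argument.

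What each route buys: the paper's route justifies its accompanying remark that the infinite family of Theorem~\ref{thm:value_functions} collapses to finitely many piecewise linear inequalities for MDPs. Yours is self-contained and shows that the hypothesis of Proposition~\ref{prop:intersectionrev} is not needed for this statement.

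If you keep the reconciliation remark, be aware of one point. A single anchor's condition $x_s-Q^x_{a_i}(s)\in\sum_{a\neq a_i}[0,1]\,(Q^x_a(s)-Q^x_{a_i}(s))$ is in general weaker than the min/max bounds once $|\mathcal A|\ge 3$. The intersection over all $i$ is therefore essential to recover them.
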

Note that the last expression is a pair of piecewise linear inequalities for each $s$. 

\begin{proof}
We derive the statement as a specialization of Theorem~\ref{thm:value_functions}. In the fully observable case, we take
\[
    \mathcal O=\mathcal S,
    \qquad
    \beta(s;o)=\mathbf 1_{\{o=s\}}.
\]
Thus the policy parameters $p_{o,a}$ become the usual state-action policy parameters, which we write as $p_{s,a}=\pi_{s,a}$.

\paragraph{Specialization of the coefficients parametrization} 
For each anchor action $a_i$, the matrices in \eqref{eq:eqs2a}--\eqref{eq:eqs2b} specialize as follows. First,  using 
$$
(A^0)_{s,s'}=\mathbf 1_{\{s=s'\}},
    \qquad
    (A^{(o,a)})_{s,s'}
    =
    -\gamma\alpha(s,a;s')\mathbf 1_{\{o=s\}} , 
$$    
we obtain 
\begin{align*} 
    (B_i^0)_{s,s'}
    =& 
    (A^0)_{s,s'}
    +
    \sum_{o\in\mathcal S}
    (A^{(o,a_i)})_{s,s'} \\
    =& 
    \mathbf 1_{\{s=s'\}}
    -
    \gamma\alpha(s,a_i;s') . 
\end{align*} 

Similarly, for $a\neq a_i$,
\begin{align*} 
    (B_i^{(o,a)})_{s,s'} =& (A^{(o,a)})_{s,s'} - (A^{(o,a_i)})_{s,s'} \\
    =&  -\gamma \bigl( \alpha(s,a;s')-\alpha(s,a_i;s') \bigr) \mathbf 1_{\{o=s\}}.
\end{align*} 

For the right-hand side, using 
$$
(b^{0})_s = 0, \qquad (b^{(o,a)})_s = r(s,a) \mathbf 1_{\{o=s\}} , 
$$
we obtain 
\begin{align*} 
    (c_i^0)_s
    =& \sum_{o\in\mathcal O} (b^{(o,a_i)})_s \\
    =& \sum_{o\in\mathcal S} r(s,a_i)\mathbf 1_{\{o=s\}} = r(s,a_i),
\end{align*} 

and

\begin{align*} 
    (c_i^{(o,a)})_s
        =& (b^{(o,a)})_s - (b^{(o,a_i)})_s\\
        =& \bigl(r(s,a)-r(s,a_i)\bigr)\mathbf 1_{\{o=s\}}.
\end{align*} 

Now fix $x\in\mathbb R^{\mathcal S}$. Define
\[
    d_{s,a}^{(i)}
    \coloneqq
    Q_a^x(s)-Q_{a_i}^x(s),
    \qquad a\neq a_i.
\]

From the preceding formulas, the vector
\begin{align*} 
    B_i^{(o,a)}x-c_i^{(o,a)} =& 
        - \mathbf 1_{\{o=s\}} \bigl( \sum_{s'} \gamma \bigl( \alpha(s,a;s')-\alpha(s,a_i;s') \bigr) x_{s'}\bigr)_s 
        - \mathbf 1_{\{o=s\}} \bigl(r(s,a)-r(s,a_i)\bigr)_s \\ =& 
        - \mathbf 1_{\{o=s\}} \bigl( \underbrace{(r(s,a) + \gamma\sum_{s'}  \alpha(s,a;s') x_{s'})}_{Q_a^x(s)} 
        - \underbrace{(r(s,a_i) + \gamma \sum_{s'} \alpha(s,a_i;s') x_{s'})}_{Q_{a_i}^x(s)} \bigr)_s
\end{align*} 
is supported only on the coordinate $o$, and this coordinate is $-d_{o,a}^{(i)}$.

Moreover,
\begin{align*} 
    \bigl(B_i(v^c)x-c_i(v^c)\bigr)_s
    =&
    \left(
        \left(B_i^{0}
        + \sum_{(o,a)\in \mathcal{O}\times \mathcal{A}_i}
        B_i^{(o,a)} v^c_{o,a}\right)x
    \right)_s
    -
    \left(
        c_i^{0}
        + \sum_{(o,a)\in \mathcal{O}\times \mathcal{A}_i}
        c_i^{(o,a)} v^c_{o,a}
    \right)_s \\
    =&
    \sum_{s'\in\mathcal S}
    \left(
        \mathbf 1_{\{s=s'\}}
        -\gamma \alpha(s,a_i;s')
    \right)x_{s'}
    - r(s,a_i) \\
    &\quad
    +\frac12
    \sum_{(o,a)\in\mathcal S\times\mathcal A_i}
    \left[
        \sum_{s'\in\mathcal S}
        \left(
            -\gamma
            \bigl(\alpha(s,a;s')-\alpha(s,a_i;s')\bigr)
            \mathbf 1_{\{o=s\}}
        \right)x_{s'}
        -
        \bigl(r(s,a)-r(s,a_i)\bigr)\mathbf 1_{\{o=s\}}
    \right] \\
    =&
    x_s
    -
    \left(
        r(s,a_i)
        +\gamma\sum_{s'\in\mathcal S}\alpha(s,a_i;s')x_{s'}
    \right) \\
    &\quad
    -\frac12
    \sum_{a\neq a_i}
    \left[
        r(s,a)-r(s,a_i)
        +
        \gamma\sum_{s'\in\mathcal S}
        \bigl(\alpha(s,a;s')-\alpha(s,a_i;s')\bigr)x_{s'}
    \right] \\
    =&
    x_s-Q_{a_i}^x(s)
    -
    \frac12\sum_{a\neq a_i}d_{s,a}^{(i)} .
\end{align*}

\paragraph{Specialization of the inequalities in Theorem~\ref{thm:value_functions}} 
By Theorem~\ref{thm:value_functions}, $x$ is a feasible value function if and only if, for every $y\in\mathbb R^{\mathcal S}$ and every $i\in\{1,\ldots,\vert \mathcal A\vert \}$,
\[
    y^\top\bigl(B_i(v^c)x-c_i(v^c)\bigr)
    \le
    \frac12
    \sum_{(o,a)\in\mathcal S\times\mathcal A_i}
    \lvert
        y^\top
        \bigl(B_i^{(o,a)}x-c_i^{(o,a)}\bigr)
    \rvert.
\]
Using the specialization above, this becomes
\[
    \sum_{s\in\mathcal S}
    y_s
    \left(
        x_s-Q_{a_i}^x(s)
        -
        \frac12\sum_{a\neq a_i}d_{s,a}^{(i)}
    \right)
    \le
    \frac12
    \sum_{s\in\mathcal S}
    \vert y_s\vert 
    \sum_{a\neq a_i}
    \vert d_{s,a}^{(i)}\vert .
\]
Since this inequality must hold for all $y$, and also for $-y$, it is equivalent to the collection of scalar inequalities
\[
    \lvert
        x_s-Q_{a_i}^x(s)
        -
        \frac12\sum_{a\neq a_i}
        \bigl(Q_a^x(s)-Q_{a_i}^x(s)\bigr)
    \rvert
    \le
    \frac12\sum_{a\neq a_i}
    \lvert
        Q_a^x(s)-Q_{a_i}^x(s)
    \rvert
\]
for every $s\in\mathcal S$ and every $i$.

\paragraph{Simplification of the inequalities} 
It remains to simplify these scalar inequalities. Fix a state $s$ and write $q_a=Q_a^x(s)$ and $z=x_s$. For a fixed anchor action $a_i$, the inequality is
\[
    \lvert
        z-q_{a_i}
        -
        \frac12\sum_{a\neq a_i}(q_a-q_{a_i})
    \rvert
    \le
    \frac12\sum_{a\neq a_i}\vert q_a-q_{a_i}\vert .
\]
By Proposition~\ref{prop:equivinreductiontoMDPinterval}, this is equivalent to
\[
    z
    \in
    \left[
        q_{a_i}
        +
        \sum_{a:q_a<q_{a_i}}(q_a-q_{a_i}),
        \;
        q_{a_i}
        +
        \sum_{a:q_a>q_{a_i}}(q_a-q_{a_i})
    \right].
\]

\paragraph{Intersection of the parametric systems}  
Intersecting over all anchor actions $a_i$, this interval condition is equivalent to
\[
    \min_{a\in\mathcal A}q_a
    \le z\le
    \max_{a\in\mathcal A}q_a.
\]
Indeed, the anchor action attaining the minimum forces $z\ge \min_a q_a$, and the anchor action attaining the maximum forces $z\le \max_a q_a$. Conversely, if $z\in[\min_a q_a,\max_a q_a]$, then the above interval condition holds for every anchor action.

Returning to the original notation, we have shown that the inequalities in Theorem~\ref{thm:value_functions} are equivalent to
\[
    \min_{a\in\mathcal A}Q_a^x(s)
    \le x_s\le
    \max_{a\in\mathcal A}Q_a^x(s)
    \qquad\text{for every }s\in\mathcal S.
\]
Since the convex hull of finitely many real numbers is precisely the interval between their minimum and maximum, this is equivalent to
\[
    x_s\in \operatorname{conv}\{Q_a^x(s):a\in\mathcal A\}
    \qquad\text{for every }s\in\mathcal S.
\]

Finally, the condition $x_s\in\operatorname{conv}\{Q_a^x(s):a\in\mathcal A\}$ is equivalent to the existence, for each state $s$, of coefficients $\pi_{s,a}\ge 0$ with $\sum_a\pi_{s,a}=1$ such that
\[
    x_s=\sum_{a\in\mathcal A}\pi_{s,a}Q_a^x(s) 
    =\sum_{a\in\mathcal A}\pi_{s,a} (r(s,a)+\gamma\sum_{s'\in\mathcal S}\alpha(s,a;s')x_{s'}) . 
\]
This is precisely the Bellman equation for the policy $\pi\in\Delta^{\mathcal S}_{\mathcal A}$. Hence the desired characterization follows. 
\end{proof}

\begin{proposition}
\label{prop:equivinreductiontoMDPinterval}
The inequality  
\[
    \lvert
        z-q_{a_i}
        -
        \frac12\sum_{a\neq a_i}(q_a-q_{a_i})
    \rvert
    \le
    \frac12\sum_{a\neq a_i}\vert q_a-q_{a_i}\vert  
\]
is equivalent to 
\[
    z
    \in
    \left[
        q_{a_i}
        +
        \sum_{a:q_a<q_{a_i}}(q_a-q_{a_i}),
        \;
        q_{a_i}
        +
        \sum_{a:q_a>q_{a_i}}(q_a-q_{a_i})
    \right].
\]
\end{proposition}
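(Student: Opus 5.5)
We reduce to centered variables: set $w = z - q_{a_i}$ and $d_a = q_a - q_{a_i}$ for $a\neq a_i$. The inequality then reads
\[
\Bigl\lvert w - \tfrac12\sum_{a\neq a_i} d_a\Bigr\rvert \le \tfrac12\sum_{a\neq a_i}\lvert d_a\rvert ,
\]
and the claimed interval becomes $w\in[\sum_{a:d_a<0} d_a,\ \sum_{a:d_a>0} d_a]$. Terms with $d_a=0$ contribute nothing to either side or to either endpoint, so they can be ignored.

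An inequality $\lvert w - c\rvert\le \delta$ with $\delta\ge 0$ is equivalent to $w\in[c-\delta,\,c+\delta]$. Take $c=\tfrac12\sum_a d_a$ and $\delta=\tfrac12\sum_a\lvert d_a\rvert$, and split each sum by the sign of $d_a$.

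For the upper endpoint,
\[
c+\delta=\tfrac12\sum_a (d_a+\lvert d_a\rvert)=\sum_{a:d_a>0} d_a ,
\]
since $d_a+\lvert d_a\rvert$ equals $2d_a$ when $d_a>0$ and $0$ otherwise. For the lower endpoint,
\[
c-\delta=\tfrac12\sum_a (d_a-\lvert d_a\rvert)=\sum_{a:d_a<0} d_a .
\]
Adding $q_{a_i}$ back to both endpoints gives exactly the stated interval, which establishes the equivalence.

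Alternatively, one can view this as the one-dimensional instance of Proposition~\ref{col:zonotope-parametric}. The interval is the zonotope $\{\sum_a \tfrac{1+q_a'}{2} d_a : q_a'\in[-1,1]\}$, i.e. the Minkowski sum of the segments $[0,d_a]$, whose endpoints are the sums of the negative and the positive $d_a$.

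There is no real obstacle here. The only point needing care is the sign bookkeeping, namely that $\tfrac12(d\pm\lvert d\rvert)$ isolates the positive or the negative part.
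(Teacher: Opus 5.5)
Your proof is correct and follows essentially the same route as the paper: both recenter with $\delta_a = q_a - q_{a_i}$, unfold $|w-c|\le\delta$ into $c-\delta \le w \le c+\delta$, and identify the endpoints using $\tfrac12(\delta_a \pm |\delta_a|)$ as the positive or negative part. Your zonotope remark, describing the interval as the Minkowski sum of the segments $[0,d_a]$, is a valid alternative viewpoint, but the paper does not use it.
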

\begin{proof}
Indeed, set
\[
    \delta_a \coloneqq q_a-q_{a_i},
    \qquad a\neq a_i.
\]
Then the inequality becomes
\[
    \lvert
        z-q_{a_i}
        -
        \frac12\sum_{a\neq a_i}\delta_a
    \rvert
    \le
    \frac12\sum_{a\neq a_i}\vert \delta_a\vert .
\]
This is equivalent to 
\[
    -\frac12\sum_{a\neq a_i}\vert \delta_a\vert 
    \le
    z-q_{a_i}-\frac12\sum_{a\neq a_i}\delta_a
    \le
    \frac12\sum_{a\neq a_i}\vert \delta_a\vert .
\]
Equivalently,
\[
    q_{a_i}
    +
    \frac12\sum_{a\neq a_i}\delta_a
    -
    \frac12\sum_{a\neq a_i}\vert \delta_a\vert 
    \le z \le
    q_{a_i}
    +
    \frac12\sum_{a\neq a_i}\delta_a
    +
    \frac12\sum_{a\neq a_i}\vert \delta_a\vert .
\]
Now use
\[
    \frac12(\delta_a-\vert \delta_a\vert )
    =
    \begin{cases}
        \delta_a, & \delta_a<0,\\
        0, & \delta_a\ge 0,
    \end{cases}
    \qquad
    \frac12(\delta_a+\vert \delta_a\vert )
    =
    \begin{cases}
        0, & \delta_a\le 0,\\
        \delta_a, & \delta_a>0.
    \end{cases}
\]
The lower endpoint becomes
\[
    q_{a_i}
    +
    \sum_{a\neq a_i}
    \frac12(\delta_a-\vert \delta_a\vert )
    =
    q_{a_i}
    +
    \sum_{a:q_a<q_{a_i}}
    (q_a-q_{a_i}),
\]
and the upper endpoint becomes
\[
    q_{a_i}
    +
    \sum_{a\neq a_i}
    \frac12(\delta_a+\vert \delta_a\vert )
    =
    q_{a_i}
    +
    \sum_{a:q_a>q_{a_i}}
    (q_a-q_{a_i}).
\]
Hence the absolute-value inequality is equivalent to
\[
    z
    \in
    \left[
        q_{a_i}
        +
        \sum_{a:q_a<q_{a_i}}(q_a-q_{a_i}),
        \;
        q_{a_i}
        +
        \sum_{a:q_a>q_{a_i}}(q_a-q_{a_i})
    \right].
\]
\end{proof}

\newpage 
\section{Proofs of Results in Section \ref{sec:finite-solution-set}}
\label{subsec:proofs_4}

We first show the equivalent formulation. 

\colzonotope* 

\begin{proof}
\label{proof:zonotope-parametric}
    We start by deriving a necessary and sufficient condition for $x$ to solve a parametric matrix equation $A(p)x - b(p) = 0$ in terms of the quantities $A(p^c), b(p^c), p_k^\Delta, A^k, b^k$. The parametric matrices under consideration depend affinely on the parameters $p_k$ via 
    $$
    A(p) = A^0 + \sum_{k=1}^K p_kA^k, \quad 
    b(p) = b^0 + \sum_{k=1}^K p_kb^k.
    $$
    At the parameter midpoint $p^c$ they are given by $A(p^c) = A^0 + \sum_{k=1}^K p^c_kA^k$, $b(p) = b^0 + \sum_{k=1}^K p^c_kb^k$. 
    A general $p$ in the interval vector $[p]$ can be expressed as $p = p^c + \delta p$, i.e., the midpoint plus some deviation. 
    Thus we can write the parametric coefficients at any arbitrary $p$ as 
    \begin{align*}
    A(p) &= A(p^c + \delta p) = A^0 + \sum_{k=1}^K (p^c_k + \delta p_k)A^k\\ &= A(p^c) + \sum_{k=1}^K \delta p_k A^k,\\
    b(p) &= b(p^c + \delta p)= b^0 + \sum_{k=1}^K (p^c_k + \delta p_k) b^k \\
    &= b(p^c) + \sum_{k=1}^K \delta p_k b^k.
    \end{align*}
    Let $q_k \in [-1,1]$ be the factor for each parameter component, such that $\delta p_k = q_kp^\Delta$. 
    Then  
    \begin{align*}
        A(p) &= A(p^c) + \sum_{k=1}^K q_k p_k^\Delta A^k \\
        b(p) &= b(p^c) + \sum_{k=1}^K q_k p_k^\Delta b^k. 
    \end{align*}
   Therefore we can rewrite $A(p)x-b(p) = 0$ as 
    \begin{align*}
        A(p)x - b(p) =& 
        A(p^c)x + \sum_{k=1}^K q_k p_k^\Delta A^kx  - ( b(p^c) + \sum_{k=1}^K q_k p_k^\Delta b^k ), \\
       =& A(p^c)x-b(p^c) + \sum_{k=1}^K q_k p_k^\Delta (A^kx-b^k).
    \end{align*}
    Thus $x$ solves $A(p)x-b(p) = 0$ for some $p\in[p]$ 
    iff there exist $q_k \in [-1,1]$ such that
    \begin{equation*}
        A(p^c)x-b(p^c) = \sum_{k=1}^K q_k p_k^\Delta (A^kx-b^k).
    \label{eq:iff-proof}    
    \end{equation*}    
\end{proof}
    Recall that a polyhedron $P \subseteq \mathbb{R}^n$ has a unique and minimal description in terms of a finite number of linear inequalities, i.e., $P \subseteq \mathbb{R}^n = \{ x \in \mathbb{R}^n: a_ix \leq b_i, i = 1, \dots, m\}$. 
    Each of the inequalities in the unique and minimal description of $P$ defines a \textit{facet} of $P$, a face of the polyhedron which has dimension one less than the dimension of $P$ \citep[Proposition~9.2]{wolsey_integer_2020}. 
    For centrally symmetric polyhedra, including zonotopes of the form $Z(g_1,\ldots, g_K) = \sum_{k=1}^K[-g_k,g_k]$, facet defining inequalities can be characterized as follows. 

\begin{proposition}[Facet inequalities for a zonotope]
\label{prop:zonotope_facet_inequalities}
Let $g_1,\ldots,g_K\in\mathbb R^d$ and define the centrally symmetric zonotope
\[
    Z(g_1,\ldots,g_K)
    =
    \sum_{k=1}^K[-g_k,g_k]
    =
    \left\{
        \sum_{k=1}^K \lambda_k g_k
        \;:\;
        -1\le \lambda_k\le 1
    \right\}.
\]
Then $h\in Z(g_1,\ldots,g_K)$ if and only if
\[
    w^\top h
    \le
    \sum_{k=1}^K \vert w^\top g_k\vert 
\]
for every facet normal $w$ of the zonotope.

Equivalently, it is sufficient to verify the inequality for all nonzero
vectors $w$ orthogonal to a linearly independent collection of generators
$\{g_i : i\in I\}$ whose span has codimension one and determines a facet direction of the zonotope, that is,
\[
    w^\top g_i =0
    \qquad\text{for all }i\in I.
\]
\end{proposition}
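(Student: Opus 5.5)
The plan is to view $Z=Z(g_1,\ldots,g_K)$ as the image $G([-1,1]^K)$ of the cube under the linear map $G=[g_1|\cdots|g_K]$. This makes $Z$ a compact convex polytope, and its support function can be computed explicitly:
\[
h_Z(w)=\max_{\lambda\in[-1,1]^K} \sum_k \lambda_k\, w^\top g_k=\sum_{k=1}^K \vert w^\top g_k\vert ,
\]
with the maximum attained at $\lambda_k=\operatorname{sign}(w^\top g_k)$. Necessity of the inequalities is then immediate: if $h=\sum_k\lambda_k g_k$ with $\vert\lambda_k\vert\le 1$, then $w^\top h\le\sum_k\vert w^\top g_k\vert$ for every $w$, and in particular for every facet normal.

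For sufficiency I would use the standard fact that a polytope equals the intersection of its facet-defining halfspaces $\{h: w^\top h\le h_Z(w)\}$, as recalled from \citet[Proposition~9.2]{wolsey_integer_2020}. I would first treat the case where $\operatorname{span}\{g_k\}=\mathbb R^d$, so that $Z$ is full-dimensional. If $Z$ is lower-dimensional, I would reduce to this case by working inside $U=\operatorname{span}\{g_k\}$, identifying facets as faces of codimension one relative to $U$. Points outside $U$ are then excluded by the equality constraints, that is, the condition $h\in U$. This matches the paper's split into the column-space equation condition and the zonotope inequality condition, and I would state this caveat explicitly.

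For the equivalent description of facet normals, I would compute the face of $Z$ exposed by a nonzero $w$:
\[
F_w=\sum_{k:\,w^\top g_k\neq 0}\operatorname{sign}(w^\top g_k)\,g_k+\sum_{k:\,w^\top g_k=0}[-g_k,g_k].
\]
This is a translate of the zonotope generated by $\{g_k: w^\top g_k=0\}$, so $\dim F_w=\operatorname{rank}\{g_k: w^\top g_k=0\}$. Hence $F_w$ is a facet exactly when the generators orthogonal to $w$ span a hyperplane (of $U$). Choosing a linearly independent subcollection $\{g_i:i\in I\}$ of those generators with the same span, $w$ is determined up to positive scaling, and both signs $\pm w$ give the two parallel facets. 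Conversely, every such $I$ spanning a codimension-one subspace yields a normal $w$ whose exposed face has dimension $d-1$, and is therefore a facet. Since both sign choices $\pm w$ are among the vectors checked, verifying the inequality over all such $w$ is the same as verifying it over all facet normals.

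The main obstacle is a bookkeeping one rather than a deep one: the lower-dimensional case. There one must make precise that facet normals are taken within $U$ and that membership in $U$ is imposed separately; otherwise the claimed equivalence fails, since for example a zero generator set gives $Z=\{0\}$. Everything else follows from the support-function computation and the face formula above.
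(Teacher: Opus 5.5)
Your proposal is correct and follows essentially the same route as the paper: you compute the support function $\sum_k \vert w^\top g_k\vert$, use it for necessity, and then reduce sufficiency to the facet-defining halfspaces of the polytope $Z$. The differences are in execution only: you identify facet normals through the explicit exposed face $F_w$ rather than the paper's appeal to the normal fan being the arrangement of the hyperplanes $g_k^\perp$, and your lower-dimensional caveat is well taken, since the paper's step ``a facet normal lies on a one-dimensional cone of this arrangement'' (and the statement itself) tacitly requires the $g_k$ to span $\mathbb{R}^d$, with membership in $\operatorname{span}\{g_k\}$ imposed separately, as the main text does via the column-space condition.
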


\begin{proof}
The support function of the zonotope is
\[
    \sigma_Z(w)
    =
    \max_{z\in Z} w^\top z.
\]
Since $Z=\sum_{k=1}^K[-g_k,g_k]$, support functions add under Minkowski sums.
Moreover,
\[
    \max_{\lambda_k\in[-1,1]} w^\top(\lambda_k g_k)
    =
    \vert w^\top g_k\vert .
\]
Therefore
\[
    \sigma_Z(w)
    =
    \sum_{k=1}^K \vert w^\top g_k\vert .
\]
For any compact convex set $K\subseteq\mathbb R^d$, one has the standard
halfspace representation
\[
    K
    =
    \{h\in\mathbb R^d : w^\top h\le \sigma_K(w)
    \text{ for all }w\in\mathbb R^d\}.
\]
Applying this to $Z$ gives
\[
    h\in Z
    \quad\Longleftrightarrow\quad
    w^\top h\le \sum_{k=1}^K \vert w^\top g_k\vert 
    \text{ for all }w\in\mathbb R^d.
\]

Since $Z$ is a polytope, it is enough to check only the irredundant supporting
halfspaces, namely the facet-defining inequalities. The normal fan of a
zonotope is the central hyperplane arrangement with hyperplanes
$g_k^\perp$. Hence a facet normal $w$ lies on a one-dimensional cone of this
arrangement. Equivalently, $w$ is orthogonal to a maximal collection of
linearly independent generators spanning the corresponding facet direction.
For such a normal, the associated facet inequality is precisely
\[
    w^\top h\le \sigma_Z(w)
    =
    \sum_{k=1}^K \vert w^\top g_k\vert .
\]
Thus membership in $Z$ is equivalent to satisfying all such facet-defining
inequalities.
\end{proof}

\semialgebraicset* 

\begin{proof}[Proof of Corollary~\ref{col:semi_algebraic_set}] 
    \label{proof:semi_algebraic}
    From Theorem~\ref{thm:zonotope_mdp}, we can obtain a finite description of the solution set to the Bellman equation for any POMDP. 
    
    As long as our finite description consists of finitely many polynomial inequalities and equalities, then we have a semi-algebraic set. 
    We note moreover that the finite description obtained from Theorem~\ref{thm:zonotope_mdp} requires the existence of the vectors $q^{(i)}$. 
    In this sense, the solution set of the Bellman equation, together with the $q^{(i)}$-vectors, is a semi-algebraic set. 

    However, by the Tarski-Seidenberg Theorem, we may eliminate existential quantifiers in the construction of a semi-algebraic set and still retain a semi-algebraic set \citep{drtonSullivantAlgebraic2007}. 
    Put another way, the image of the semi-algebraic set $A(x, q) = \{ x, q: D_1q^{(1)} = C_1, -1 \leq q^{(1)} \leq 1, D_2q^{(2)} = C_2, -1 \leq q^{(2)} \leq 1\}$ under the projection map $A(x, q) \to \tilde{A}(x)$ is itself semi-algebraic. 

    It remains to be shown that the description provided in Theorem~\ref{thm:zonotope_mdp} indeed consists of finitely many polynomial inequalities and equalities. 
    By Theorem~\ref{thm:zonotope_mdp}, the solution set to the Bellman equation admits a finite description using auxiliary vectors $q^{(i)}$ of the form
    \begin{align*}
        D_i(x)\, q^{(i)} = C_i(x), -1 \le q^{(i)}_j \le 1,        
    \end{align*}
    for $i$ and all components $j$. 
    There are finitely many such equalities and inequalities.

    Each entry of $D_i(x)$ and $C_i(x)$ is a polynomial function of $x$, since they are constructed from the POMDP using finitely many sums and products. Hence each constraint $D_i(x) q^{(i)} = C_i(x)$ is a system of polynomial equalities in the joint variables $(x,q)$, and the box constraints on $q^{(i)}$ are polynomial inequalities. Therefore the set
    $$
    A 
    = \left\{(x,q): D_i(x) q^{(i)} = C_i(x),\;  -1 \le q^{(i)} \le 1 \right\}
    $$
    is semi-algebraic. 

    The feasible value-function set is precisely the projection of $A$ onto the $x$-coordinates:
    $$
    \tilde A 
    = \{x : \exists q \text{ such that } (x,q)\in A\}.
    $$
    By the Tarski–Seidenberg theorem, projections of semi-algebraic sets are semi-algebraic. Hence $\tilde A$ is semi-algebraic.
\end{proof}

\newpage 
\section{Semi-algebraic Description of the Solution Set for the Bellman Equation}  
\label{app:semi}

\thmquantifierfreepomdp*

\begin{proof}[Proof of Theorem~\ref{thm:quantifier_free_pomdp}]
The proof proceeds in four steps: 
\begin{itemize}
    \item In Step~1, we rewrite the Bellman equation as a parametric linear feasibility system in the policy parameters. 
    \item In Step~2, we show that feasibility of this system is equivalent to feasibility of a full-row-rank subsystem obtained by selecting a maximal independent row set; the algebraic conditions that identify such a subsystem and certify consistency are exactly (i)--(ii). 
    \item In Step~3, we apply support reduction to the reduced subsystem to obtain a basic feasible solution. 
    \item In Step~4, we eliminate the policy parameters via Cramer's rule, yielding the sign-polynomial conditions (iii), and combine both directions to obtain the stratified union.
    \item In Step~5, we show via forward and backward inclusions that rank stratification indeed yields the solution set $S = \cup_\rho \cup_I \cup_B S_{\rho, I, B}$.
\end{itemize}

Throughout, we vectorize $P$ by $p_{(o-1)\vert \mathcal A\vert +a}=P(a |  o)$, so $p\in\mathbb R^{\vert \mathcal O\vert \vert \mathcal A\vert }$. 

\paragraph{Step 1: Bellman equation as a parametric linear system.}
Recall the effective policy, or policy over observations, obtained as
\[
\pi_s(a) := \sum_{o\in\mathcal O}\beta(s;o)\,P(a |  o),
\]
the policy-weighted transition kernel $P^\pi_{s,s'}=\sum_a \pi_s(a)\,\alpha(s,a;s')$, and the expected reward $r^\pi_s=\sum_a \pi_s(a)\,r(s,a)$. The Bellman optimality equation is $(I-\gamma P^\pi)x = r^\pi$. Rearranging and expanding componentwise, we get
\begin{align*}
    (I - \gamma P^\pi)r^\pi &= 0\\
    x_s - \gamma\sum_{a}\pi_s(a)\sum_{s'}\alpha(s,a;s')x_{s'} - \sum_a \pi_s(a)\,r(s,a) &= 0\\
    x_s - \sum_{o,a}\beta(s;o)\,P(a |  o)\,\Bigl(\gamma\sum_{s'}\alpha(s,a;s')x_{s'} + r(s,a)\Bigr)  &= 0\\
    x_s - \sum_{o,a} u_{o,a}(x)_s\, P(a |  o) &= 0 \\
    \sum_{o,a} u_{o,a}(x)_s\, P(a |  o) &= x_s,
\end{align*}
with $u_{o,a}(x)$ as in the theorem. For fixed $x$, this is linear in $P$. Letting $M(x)$ be the matrix with columns $u_{o,a}(x)$ and $d(x)=x$, the Bellman equation is equivalent to $M(x)p = d(x)$.

The row-sum constraints $\sum_a P(a |  o)=1$ for each $o\in\mathcal O$ are encoded by $Rp=\mathbf 1_{\vert \mathcal O\vert }$, where $R\in\mathbb R^{\vert \mathcal O\vert \times \vert \mathcal O\vert \vert \mathcal A\vert }$ has $R_{o,(o-1)\vert \mathcal A\vert +a}=1$ for $a=1,\dots,\vert \mathcal A\vert $ and zero elsewhere. 

Together with $p\ge 0$, the Bellman equation under a stochastic memoryless policy is equivalent to the parametric linear feasibility system
\begin{equation}
\label{eq:Cpf}
C(x)\,p = f(x), \qquad p\ge 0,
\end{equation}
with $C(x)\in\mathbb R^{(|\mathcal S|+|\mathcal O|)\times |\mathcal O||\mathcal A|}$ and $f(x)\in\mathbb R^{|\mathcal S|+|\mathcal O|}$ as defined in the theorem. 

Hence $x$ is a feasible value function if and only if \eqref{eq:Cpf} admits a solution.

\paragraph{Step 2: Reducing to a full-row-rank subsystem}
Note that if $\rank C(x)=|\mathcal S|+|\mathcal O|$, then $C(x)$ would have full row rank, the equation $C(x)p=f(x)$ is automatically consistent, and we could take a square invertible submatrix and proceed. However, $C(x)$ is in general not square (it has $|\mathcal S|+|\mathcal O|$ rows and $|\mathcal O||\mathcal A|$ columns) and need not have full row rank.

Therefore, our goal in this step is to reduce \eqref{eq:Cpf} to an equivalent system in which the coefficient matrix has full row rank.  We let the rank be a variable $\rho\in\{1,\dots,|\mathcal S|+|\mathcal O|\}$ over which we stratify, and for each value of $\rho$ extract the conditions necessary to obtain a full-row-rank subsystem. 

Fix $x$ and write $\rho=\rank C(x)$. By definition of rank, there exist row and column index sets $I\subseteq\{1,\dots,|\mathcal S|+|\mathcal O|\}$ and $B\subseteq\{1,\dots,|\mathcal O||\mathcal A|\}$ with $|I|=|B|=\rho$ such that $\det C_{I,B}(x)\ne 0$, while every $(\rho+1)\times(\rho+1)$ minor of $C(x)$ vanishes. These two conditions together algebraically certify $\rank C(x)=\rho$ and are exactly condition (i).

Equation~\eqref{eq:Cpf} has a solution if and only if $\rank[C(x) |  f(x)] = \rank C(x) = \rho$, i.e., every $(\rho+1)\times(\rho+1)$ minor of $[C(x) |  f(x)]$ vanishes. This is condition (ii).

Now suppose (i)--(ii) hold for some $(\rho,I,B)$. Since $|I|=\rho=\rank C(x)$ and the rows indexed by $I$ are linearly independent, they form a basis of the row space of $C(x)$. Hence for every row index $j\in\{1,\dots,|\mathcal S|+|\mathcal O|\}$, there exists a unique $\lambda_j\in\mathbb R^\rho$ such that
\[
C_{j,*}(x)=\lambda_j^\top C_{I,*}(x).
\]
Since $C_{I,*}(x)$ has full row rank, restricting both sides to columns $B$ gives $C_{j,B}(x)=\lambda_j^\top C_{I,B}(x)$.  $C_{I,B}(x)$ is invertible, so we can uniquely obtain $\lambda_j=(C_{I,B}(x)^{-1})^ C_{j,B}(x)^\top$ 

Since the rows of $[C(x) |  f(x)]$ indexed by $I$ already contribute $\rho$ independent rows, every other row $[C_{j,*}(x) |  f_j(x)]$ must be a linear combination of these, say with coefficients $\mu_j\in\mathbb R^\rho$. Restricting to the $C$-block gives $C_{j,*}(x)=\mu_j^\top C_{I,*}(x)$, and by the uniqueness just established, $\mu_j=\lambda_j$. Restricting to the $f$-block then yields
\[
f_j(x)=\lambda_j^\top f_I(x)\qquad\text{for every }j\in\{1,\dots,|\mathcal S|+|\mathcal O|\}.
\]
Consequently, for any $p$ satisfying $C_{I,*}(x)\,p=f_I(x)$, we have $C_{j,*}(x)\,p=\lambda_j^\top C_{I,*}(x)\,p=\lambda_j^\top f_I(x)=f_j(x)$ for every $j$, i.e., $C(x)p=f(x)$. The converse is immediate. Therefore
\[
C(x)p=f(x) \quad\iff\quad C_{I,*}(x)\,p = f_I(x),
\]
so feasibility of \eqref{eq:Cpf} reduces, under (i)--(ii), to feasibility of the full-row-rank subsystem
\begin{equation}
\label{eq:reduced}
C_{I,*}(x)\,p = f_I(x), \qquad p\ge 0.
\end{equation}

\paragraph{Step 3: Support reduction on the reduced subsystem}
At this point we might hope to be done as we could take any $(I,B)$ certifying $\rank C(x)=\rho$ and set $p_B=C_{I,B}(x)^{-1}f_I(x)$, $p_{B^c}=0$. By construction this $p$ solves $C_{I,*}(x)p=f_I(x)$, hence by Step~2 it solves \eqref{eq:Cpf} as an equality. The problem is the nonnegativity constraint: the $p_B$ we obtain may have entries which are negative. 

The purpose of Step~3 is to choose $B$ correctly. We take a known nonnegative solution $p^0$ of \eqref{eq:reduced} (which exists by assumption when $x\in S$) and walk it down to a basic feasible solution $p^\star$ whose support is small enough to be contained in some column set $B$ of size $\rho$ with $C_{I,B}(x)$ invertible. Elements of the solution $C_{I,B}(x)^{-1}f_I(x)$ then equal the support entries of $p^\star$, which are nonnegative by construction. 
 
Concretely, we show that if \eqref{eq:reduced} is feasible, then it admits a basic feasible solution $p^\star$ whose support is contained in a column index set $B$ of size $\rho$ for which $C_{I,B}(x)$ is invertible.

Let $p^0$ be any feasible point of \eqref{eq:reduced} and put $J=\supp(p^0)$. If the columns $\{C_{I,j}(x):j\in J\}$ are linearly dependent, there exists a nonzero $h\in\mathbb R^{|J|}$ with $C_{I,J}(x)\,h=0$. Extend $h$ to $\bar h\in\mathbb R^{|\mathcal O||\mathcal A|}$ by zero outside $J$; then $C_{I,*}(x)\bar h=0$ and $\supp(\bar h)\subseteq J$.

Consider $p(t)=p^0+t\bar h$. For every $t$, $C_{I,*}(x)\,p(t)=f_I(x)$, so feasibility reduces to $p(t)\ge 0$. Since $\bar h$ is supported on $J$ where $p^0>0$, both signs may occur in $\{\bar h_j:j\in J\}$, or all nonzero entries may share one sign. In either case, define the feasible interval $[L,U]$ with
\[
L = \max_{j:\,\bar h_j>0} \!\Bigl(-\frac{p^0_j}{\bar h_j}\Bigr),
\qquad
U = \min_{j:\,\bar h_j<0} \!\Bigl(-\frac{p^0_j}{\bar h_j}\Bigr),
\]
under the convention $L=-\infty$ if no $\bar h_j>0$ and $U=+\infty$ if no $\bar h_j<0$. Since $p(0)=p^0\ge 0$ we have $L\le 0\le U$. Because $\bar h\ne 0$, at least one of $L,U$ is finite: if any $\bar h_j>0$ then $L>-\infty$, and if any $\bar h_j<0$ then $U<+\infty$, and at least one such $j$ exists. Choose $t^\star$ to be a finite endpoint of $[L,U]$, taking $t^\star=U$ if $U<+\infty$, otherwise $t^\star=L$. At $t=t^\star$, some index $j^\star\in J$ has $p_{j^\star}(t^\star)=0$: set $p^1=p(t^\star)$. Then $p^1\ge 0$, $C_{I,*}(x)p^1=f_I(x)$, and $\supp(p^1)\subsetneq J$.

If we iterate this reduction, it terminates as the support strictly decreases at each step and will yield a feasible $p^\star$ whose support $J^\star:=\supp(p^\star)$ satisfies that the columns $\{C_{I,j}(x):j\in J^\star\}$ are linearly independent. In particular, $|J^\star|\le\rho$.

It remains to extend $J^\star$ to a column set $B$ of size exactly $\rho$ with $\det C_{I,B}(x)\ne 0$. Since $\rank C_{I,*}(x)=\rho$, the columns of $C_{I,*}(x)$ span an $\rho$-dimensional space, and the linearly independent set $\{C_{I,j}(x):j\in J^\star\}$ can be extended to a basis $\{C_{I,j}(x):j\in B\}$ of this column space, with $J^\star\subseteq B\subseteq\{1,\dots,|\mathcal O||\mathcal A|\}$ and $|B|=\rho$. By construction, $C_{I,B}(x)$ has $\rho$ linearly independent columns and $\rho$ rows, so it is invertible: $\det C_{I,B}(x)\ne 0$. This $B$ is precisely a column set appearing in condition (i).

Setting $p^\star_{B^c}=0$ (which holds since $\supp(p^\star)\subseteq J^\star\subseteq B$), the equation $C_{I,*}(x)p^\star=f_I(x)$ becomes $C_{I,B}(x)\,p^\star_B=f_I(x)$, giving
\[
p^\star_B = C_{I,B}(x)^{-1}f_I(x),\qquad p^\star_{B^c}=0,\qquad p^\star_B\ge 0.
\]
Conversely, any $p$ of this form with $p_B\ge 0$ is feasible for \eqref{eq:reduced}, hence (by Step~2) for \eqref{eq:Cpf}.

\paragraph{Step 4: Construction of the sign polynomial via Cramer's rule}
With $\det C_{I,B}(x)\ne 0$ from (i), Cramer's rule gives
\[
(p_B^\star)_t = \frac{\det C_{I,B,t}(x)}{\det C_{I,B}(x)}, \qquad t=1,\dots,\rho,
\]
where $C_{I,B,t}(x)$ is obtained from $C_{I,B}(x)$ by replacing its $t$-th column with $f_I(x)$. 

Because $\det C_{I,B}(x)\ne 0$ by (i), the quantity $(\det C_{I,B}(x))^2$ is strictly positive, so multiplying the inequality $(p_B^\star)_t\ge 0$ through by $(\det C_{I,B}(x))^2$ is sign-preserving. The nonnegativity condition $p_B^\star\ge 0$ is therefore equivalent to the polynomial inequalities
\[
\det C_{I,B,t}(x)\,\det C_{I,B}(x) \ge 0, \qquad t=1,\dots,\rho,
\]
which is condition (iii).

\paragraph{Step 5: Forward and Backward Inclusion} Recall from Step~1 that $S=\{x\in\mathbb R^{\mathcal S}:\exists p\ge 0,\ C(x)p=f(x)\}$, the set of $x$ for which \eqref{eq:Cpf} is feasible. 

The set $S_{\rho,I,B}$ is the locus of $x\in\mathbb R^{|\mathcal S|}$ at which the polynomial conditions (i)--(iii) hold for the given triple $(\rho,I,B)$; each $S_{\rho,I,B}$ is therefore a basic semi-algebraic set, cut out by polynomial equalities (vanishing minors), one polynomial nonvanishing condition ($\det C_{I,B}(x)\ne 0$), and the sign-polynomial inequalities of (iii).

We now combine the four steps to establish that feasibility of \eqref{eq:Cpf} is equivalent to membership in this union:
\[
S = \bigcup_{\rho=1}^{|\mathcal S|+|\mathcal O|}\,\bigcup_{|I|=\rho}\,\bigcup_{|B|=\rho} S_{\rho,I,B}.
\]

\subparagraph{$\left(\Rightarrow \right)$} To show sufficiency, suppose $x\in S_{\rho,I,B}$ for some triple $(\rho,I,B)$, i.e., conditions (i)--(iii) hold.

By (i), $\det C_{I,B}(x)\ne 0$, so define $p\in\mathbb R^{|\mathcal O||\mathcal A|}$ by $p_B=C_{I,B}(x)^{-1}f_I(x)$ and $p_{B^c}=0$. 

By Cramer's rule and (iii), $p\ge 0$. By construction, $C_{I,*}(x)\,p=f_I(x)$, i.e., $p$ solves the reduced system \eqref{eq:reduced}. 

By Step~2 (using (i)--(ii)), $p$ also solves \eqref{eq:Cpf}, so $x\in S$.

\subparagraph{$\left( \Leftarrow \right)$} To show necessity, suppose $x\in S$, so \eqref{eq:Cpf} admits some solution $p^0$. 

Set $\rho = \rank C(x)\in\{1,\dots,|\mathcal S|+|\mathcal O|\}$. By definition of rank, choose any $I,B$ with $|I|=|B|=\rho$ such that $\det C_{I,B}(x)\ne 0$ and every $(\rho+1)\times(\rho+1)$ minor of $C(x)$ vanishes; this gives (i). 

Since \eqref{eq:Cpf} is feasible, $f(x)$ lies in the column space of $C(x)$, hence $\rank[C(x) |  f(x)]=\rank C(x)=\rho$, and every $(\rho+1)\times(\rho+1)$ minor of $[C(x) |  f(x)]$ vanishes; this gives (ii). 

By Step~2, $p^0$ also solves the reduced system \eqref{eq:reduced}. Applying Step~3 to $p^0$ produces a basic feasible solution $p^\star$ supported in some $B'\subseteq\{1,\dots,|\mathcal O||\mathcal A|\}$ with $|B'|=\rho$ and $\det C_{I,B'}(x)\ne 0$. Replacing $B$ by $B'$ (which still satisfies (i) since $\det C_{I,B'}(x)\ne 0$ and the vanishing of $(\rho+1)\times(\rho+1)$ minors is a property of $C(x)$ independent of the choice of $B$), Cramer's rule applied to $p^\star$ together with $p^\star\ge 0$ yields (iii). 

Hence $x\in S_{\rho,I,B'}$.

Both inclusions give the claimed quantifier-free semi-algebraic description
\[
S = \bigcup_{\rho=1}^{|\mathcal S|+|\mathcal O|}\,\bigcup_{|I|=\rho}\,\bigcup_{|B|=\rho} S_{\rho,I,B}. \qedhere
\]
\end{proof}

\begin{remark}[Complexity of quantifier elimination in the parametric system]
    In the main body of the paper, we discussed how the Tarski-Seidenberg theorem guarantees that the solution set to the Bellman equation is a semi-algebraic set. This statement relies on quantifier elimination in that we need to pass from conditions on the solution set which depend on simultaneous choices of $(x, q)$ to one depending only on $x$. This can be done by projection of the solution set onto the $x$ coordinates. 
    
    While this use of quantifier elimination is useful for understanding the properties of the solution set to the Bellman equation, it is not a priori a useful observation, as the computational complexity of quantifier elimination is known to be doubly exponential in the number of variables \citep[Chapter 14]{BasuPollackRoy}.
    
    The result in Theorem~\ref{thm:quantifier_free_pomdp} shows that a quantifier-free parametrization can be achieved with $k \choose l$ determinantal conditions for each choice of rank in the stratification, where $k = \vert \mathcal{O} \vert \vert \mathcal{A} \vert$ and $l$ is the rank of the matrix 
    $$
    C(x) = \begin{pmatrix}
        M(x) \\ R
    \end{pmatrix}.
    $$ 
    
    This is substantially lower than the doubly exponential bound cited above and moreover results in tractably computable conditions for a value function to be feasible for the Bellman equation for a given POMDP.
        
\end{remark} 

\newpage

\section{Numerical Illustrations}   
\label{app:details-experiments}

Code for generating the figures described below can be obtained at the below link: 

\texttt{https://github.com/ryan-a-anderson/pomdp-value-geometry}.

\subsection{Figure~\ref{fig:linear_ineqs}, Figure~\ref{fig:linear_nonlinear_ineqs}} 

Let $\mathcal S=\{0,1\}$, $\mathcal A=\{0,1\}$, and $\mathcal O=\{0,1,2\}$.
For each action $a\in\mathcal A$, the transition matrix $P^a\in\mathbb R^{2\times 2}$ has entries
$P^a_{ss'}=\mathbb P(s_{t+1}=s' |  s_t=s,a_t=a)$ and is specified as
\[
P^0 =
\begin{pmatrix}
0.85 & 0.15\\
0.25 & 0.75
\end{pmatrix},
P^1 =
\begin{pmatrix}
0.65 & 0.35\\
0.15 & 0.85
\end{pmatrix}.
\]
The reward function is given by $R\in\mathbb R^{2\times 2}$ with entries $R_{a,s}=r(s,a)$:
$$
R = \begin{pmatrix}
1 & 0\\
0 & 1
\end{pmatrix}
$$
The observation kernel $\beta\in\mathbb R^{2\times 3}$ has entries
$\beta_{s,o}=\mathbb P(o_t=o |  s_t=s)$:
$$
\beta =
\begin{pmatrix}
0.80 & 0.10 & 0.10\\
0.30 & 0.65 & 0.05
\end{pmatrix}.
$$

We use discount factor $\gamma=0.9$.

In both figures, the blue lines are samples from the infinite family of piecewise linear inequalities given in Theorem~\ref{thm:value_functions},
\[
y^\top \bigl(A(p^c) x - b(p^c)\bigr) \;\le\; \sum_{(o,a)} p^\Delta_{(o,a)} \,\bigl| y^\top (A^{(o,a)} x - b^{(o,a)}) \bigr|,
\]
indexed by $y \in \mathbb{R}^{|\mathcal{S}|}$. We sample $50$ unit vectors $y$ uniformly on $S^{1}$ and plot the resulting boundaries. The intersection over all $y \in \mathbb{R}^{|\mathcal{S}|}$ is both necessary and sufficient.

The red and green dashed curves are the boundary curves of the semi-algebraic description from Theorem~\ref{thm:zonotope_mdp}, obtained as the level sets $\{x : |q^{(i)}_j(x)| = 1\}$ for $i \in \{1, 2\}$ and $j$ ranging over the components of $q^{(i)}$. We compute these by solving the linear systems $D_1(x)\, q^{(1)} = C_1(x)$ and $D_2(x)\, q^{(2)} = C_2(x)$ for $q^{(1)}(x), q^{(2)}(x)$ at each $x$, and tracing the curves where any component reaches $\pm 1$.

For the boundary computation to be exact, $D_1, D_2$ must be invertible. From the parametrizations in eqs.~\ref{eq:eqs1a}--\ref{eq:eqs2a}, invertibility of $D_1, D_2$ reduces to invertibility of $A^{(o,a)}, B^{(o,a)}$, which holds generically: $A^{(o,a)}$ is a scaled transition-kernel block, and $B^{(o,a)}$ is a shifted version. When $D_1, D_2$ are non-square, $q^{(1)}, q^{(2)}$ must be obtained by solving a feasibility problem with box constraints, for which one practical approach is to solve the least-squares problems $\min \|D_i q^{(i)} - C_i\|^2$. This can introduce false positives, but the empirical error is small.

The figure illustrates that the same set $\mathcal{V}$ requiring infinitely many linear inequalities under Theorem~\ref{thm:value_functions} can be exactly recovered using only four polynomial curves under Theorem~\ref{thm:zonotope_mdp}. The four curves here are specific to the $(|\mathcal{S}|, |\mathcal{A}|, |\mathcal{O}|) = (2, 2, 3)$ instance; in general the number of scalar boundary constraints is $|\mathcal{O}||\mathcal{A}| + |\mathcal{O}|(|\mathcal{A}| - 1)$.

\subsection{Figure~\ref{fig:initial_state_distro}}

Let the state space be $\mathcal S=\{0,1\}$ and the action space $\mathcal A=\{0,1\}$.
For each action $a\in\mathcal A$, the transition kernel $P^a\in\mathbb R^{2\times 2}$ has entries
$P^a_{ss'}=\mathbb P(s_{t+1}=s' |  s_t=s,a_t=a)$ and is defined as
\[
P^0 \;=\;\begin{pmatrix}
0.85 & 0.15\\
0.25 & 0.75
\end{pmatrix},\qquad
P^1
\;=\;\begin{pmatrix}
0.65 & 0.35\\
0.15 & 0.85
\end{pmatrix}.
\]
Rewards are state-dependent and action-dependent via $R\in\mathbb R^{2\times 2}$ with entries
$R_{a,s}=r(s,a)$:
$$
R =\begin{pmatrix}
1 & 0\\
0 & 1
\end{pmatrix},
$$
Observations take values in $\mathcal O=\{0,1\}$ and are generated from an observation (emission) kernel
$\beta\in\mathbb R^{2\times 2}$ with entries $\beta_{s,o}=\mathbb P(o_t=o |  s_t=s)$:
$$
\beta \;=\;\begin{pmatrix}
0.65 & 0.35\\
0.35 & 0.65
\end{pmatrix}.
$$

We consider a set of initial state distributions $\rho$ of the form
$\rho=(\rho_0,\rho_1)$ with $\rho_0+\rho_1=1$.
In the figure we evaluate several representative initial state distributions spanning the simplex:
$$
\rho^{(1)}=(0.2,0.8),\quad
\rho^{(2)}=(0.5,0.5),\quad
\rho^{(4)}=(0.75,0.25),\quad
\rho^{(5)}=(0.8,0.2).
$$
For each $\rho^{(i)}$, we maximize the scalar objective via vanilla gradient ascent with a fixed learning rate.
$$
    J(\pi;\rho^{(i)}) =  \sum_{s\in\mathcal S}\rho^{(i)}_s\, V^\pi(s).
$$

\subsection{Finite Description for $2 \times 2$ Real Parametric Systems} 
\label{app:finite-description-example}
Consider a $2 \times 2$ real parametric system in 2 parameters with 
\begin{align*} 
A(p) &= A^0 + p_1A^1 + p_2A^2
\\ b(p) &= b^0 + p_1b^1 + p_2b^2 . 
\end{align*} 

Let
\[
A^0 = \begin{pmatrix}
a^0_{11} & a^0_{12} \\[4pt]
a^0_{21} & a^0_{22}
\end{pmatrix}, \quad
b^0 = \begin{pmatrix}
b^0_1 \\[3pt]
b^0_2
\end{pmatrix},
\]
and for \(k=1,2\),
\[
A^k = \begin{pmatrix}
a^k_{11} & a^k_{12} \\[4pt]
a^k_{21} & a^k_{22}
\end{pmatrix},
b^k = \begin{pmatrix}
b^k_1 \\[3pt]
b^k_2
\end{pmatrix},
p_k^\Delta = \Delta_k, p^c = 0. 
\]

For \(x=(x_1,x_2)\), 
the midpoint residual vector is 
\begin{align*}
    C(x) &=  
    \begin{pmatrix}
    r_1(x) \\[4pt]
    r_2(x)
    \end{pmatrix} \\
    &= 
    \begin{pmatrix}
    a^0_{11}x_1 + a^0_{12}x_2 \;-\; b^0_1 \\[6pt]
    a^0_{21}x_1 + a^0_{22}x_2 \;-\; b^0_2
    \end{pmatrix} 
    .
\end{align*} 
The matrix $D(x) = \bigl[\,d^1(x)\;\;d^2(x)\bigr] \in\mathbb{R}^{2\times2}$ has columns  
\begin{align*}
    d^k(x)
    &=\Delta_k\bigl(A^k x - b^k\bigr) \\
    &=\Delta_k
    \begin{pmatrix}
    a^k_{11}x_1 + a^k_{12}x_2 - b^k_1 \\[4pt]
    a^k_{21}x_1 + a^k_{22}x_2 - b^k_2
    \end{pmatrix}, \quad k=1,2. 
\end{align*}

We have that $(I - D\,D^\dagger)C = 0$ iff the augmented matrix \(\bigl[D  |  C\bigr]\) has the same rank as $D$. The latter is the case iff the $(r+1)\times(r+1)$ minors of $[D | C ]$ vanish, where $r$ is the rank of $D$.

\subsection{Illustration of Theorem~\ref{thm:quantifier_free_pomdp}} 
\label{subsec:theorem_4.4_example}

Since the conditions in Theorem~\ref{thm:quantifier_free_pomdp} involve taking determinants of the $B$-submatrices of $C(x)$, it is interesting to consider how those determinants factor as variables in $x$.

The process is as follows: after constructing quantities $M(x) \in \mathbb{R}^{n\times k}, R \in \mathbb{R}^{r \times k}, C(x) \in \mathbb{R}^{n+r \times k}, f(x) \in \mathbb{R}^{n+r}$, we take all $k \choose l$ bases formed by subsets of $\{1, \dots, k\}$ of size $\ell$. For each basis $B$ we compute $C_B(x)$, the $\ell \times \ell$ submatrices of $C(x)$ indexed by $B$. 

For each $t = 1, \dots, \ell$, we calculate the sign polynomial $\det C_{B,t}(x) \cdot \det C_B(x)$, which is polynomial in $x$. We have via Theorem~\ref{thm:quantifier_free_pomdp} that $x$ is a feasible value function iff there exists a basis $B$ such that $\det C_B(x) \neq 0$ and the sign polynomials are all non-negative:
\begin{align*}
    \det C_{B,1}(x) \cdot \det C_B(x) &\geq 0\\
    \det C_{B,2}(x) \cdot \det C_B(x) &\geq 0\\
    \dots& \\
    \det C_{B,\ell}(x) \cdot \det C_B(x) &\geq 0.
\end{align*}

If $B$ is a valid basis in the sense above, then consider the component of the solution set $\mathcal{S}$ generated by $B$, 
\begin{align*}
    S_B = \{x: \det C_B(x) \neq 0 \} \cap_{t=1}^\ell \{ x: \det C_{B, t}(x)\det C_B(x) \geq 0 \}.
\end{align*}

The structure of $S_B$ is controlled by the factorization of the polynomials $\det C_{B,t}(x) \cdot \det C_B(x)$ over $\mathbb{R}[x]$. Each irreducible factor cuts out an algebraic hypersurface in $\mathbb{R}^n$, and $\partial S_B$ is a union of pieces of these hypersurfaces. When all irreducible factors are linear in $x$, the component $S_B$ is a polyhedron.

To illustrate, we work out the factorization explicitly for the minimal nontrivial instance $|S| = |A| = |O| = 2$, where $k = rm = 4$ and $\ell = n + r = 4$, so there is a unique basis $B = \{1,2,3,4\}$. Introduce the auxiliary polynomials
\begin{align*}
    Q_s(x) &:= \left[\gamma \sum_{s'} \alpha(s'|s,0)\, x_{s'} + r(s,0)\right] - \left[\gamma \sum_{s'} \alpha(s'|s,1)\, x_{s'} + r(s,1)\right], \\
    B_{s,a}(x) &:= \gamma \sum_{s'} \alpha(s'|s,a)\, x_{s'} + r(s,a) - x_s.
\end{align*}
We refer to $Q_s$ as the advantage at state $s$ and to $B_{s,a}$ as the residual at $(s,a)$. Both are linear in $x$ and independent of $\beta$.

Direct computation in Macaulay2 \citep{M2} yields the following: for a generic POMDP, each sign condition $\det C_{B,1}(x) \cdot \det C_B(x)$ factors into two linear advantage factors and a polynomial whose coefficients depend on $\beta$ which is not necessarily linear: 
$$
\det C_{B,1}(x) \cdot \det C_B(x) = Q_0(x)\, Q_1(x) \cdot K_t(x;\, \alpha, r, \gamma, \beta).
$$
In the fully observable case ($\beta = I$), however, each sign polynomial factors completely into linear terms:
\begin{align*}
    \det C_{B,1}(x) \cdot \det C_B(x) &= 
    -Q_0(x)Q_1(x)\left[Q_1(x) B_{0,1}(x)\right], \\
    \det C_{B,2}(x) \cdot \det C_B(x) &= \phantom{-}Q_0(x)Q_1(x)\left[Q_1(x)B_{0,0}(x)\right] , \\
    \det C_{B,3}(x) \cdot \det C_B(x) &= 
    -Q_0(x)Q_1(x)\left[Q_0(x)B_{1,1}(x)\right], \\
    \det C_{B,4}(x) \cdot \det C_B(x) &= \phantom{-}Q_0(x)Q_1(x)\left[Q_0(x)B_{1,0}(x)\right].
\end{align*}
The corresponding component $S_B$ is therefore cut out by sign conditions on a finite collection of hyperplanes---the advantage indifference planes $\{Q_s = 0\}$ and the Bellman-tight planes $\{B_{s,a} = 0\}$---recovering the MDP value-function polytope of \cite{dadashiValueFunctionPolytope2019}.

In the partially observable case, the $K_t$ factor no longer splits --- for generic $\beta$, $K_t$ is an irreducible quadratic in $x$ whose coefficients are polynomially dependent on the entries of $\beta$ and on $\alpha, r, \gamma$. 

The zero locus $\{K_t = 0\}$ is therefore a conic in $\mathbb{R}^n$, not a union of two lines, and the boundary $\partial S_B$ acquires curved pieces. 

\begin{figure}
    \centering
    \includegraphics[width=1.0\linewidth]{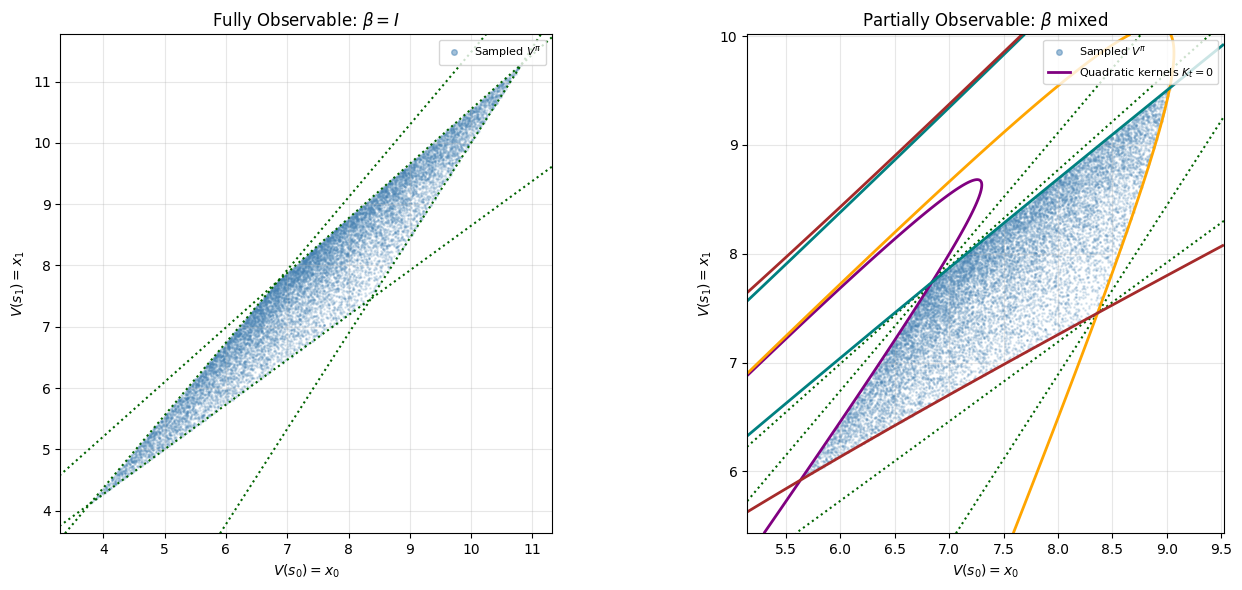}
    \caption{We construct the feasible set of value functions for a 2 state, 2 action, 2 observation MDP in a fully observable and partially observable setting. On the left panel, the calculation of $S_B$ generates a system of polynomials which all factor into linear terms. These linear functions are plotted in green dashes. On the right panel, the calculation of $S_B$ generates a system of polynomials which do not all factor into linear terms --- the linear terms which survive for the fully observable case are plotted in green, while the irreducible non-linear terms are plotted in gold and yellow.}
    \label{fig:factor-curves}
\end{figure}

Figure~\ref{fig:factor-curves} displays the resulting picture for a representative $2 \times 2 \times 2$ instance. On the left, with $\beta = I$, the feasible set $\mathcal{V}$ is the quadrilateral spanned by the four deterministic policies, and its edges lie along the Bellman-residual lines $\{B_{s,a} = 0\}$. On the right, the deterministic policies still lie on $\partial \mathcal{V}$ but no longer span a polytope, and meanwhile portions of $\partial \mathcal{V}$ are traced out by the conics $\{K_t = 0\}$, in particular the concave arc on the lower-left edge of the feasible set.

For the instance plotted in Figure~\ref{fig:factor-curves}, we take
\[
\alpha(\cdot|0,0) = (\tfrac{4}{5}, \tfrac{1}{5}),\ \
\alpha(\cdot|0,1) = (\tfrac{2}{5}, \tfrac{3}{5}),\ \
\alpha(\cdot|1,0) = (\tfrac{3}{10}, \tfrac{7}{10}),\ \
\alpha(\cdot|1,1) = (\tfrac{9}{10}, \tfrac{1}{10}),
\]
\[
r(0,0) = 1,\ r(0,1) = \tfrac{1}{5},\ r(1,0) = \tfrac{1}{2},\ r(1,1) = \tfrac{3}{2},\ \gamma = \tfrac{9}{10}.
\]

The four $\beta$-independent advantage and Bellman-residual polynomials are
\begin{align*}
Q_0(x) &= \tfrac{9}{25}x_0 - \tfrac{9}{25}x_1 + \tfrac{4}{5}, &
Q_1(x) &= -\tfrac{27}{50}x_0 + \tfrac{27}{50}x_1 - 1, \\[2pt]
B_{0,0}(x) &= -\tfrac{7}{25}x_0 + \tfrac{9}{50}x_1 + 1, &
B_{0,1}(x) &= -\tfrac{16}{25}x_0 + \tfrac{27}{50}x_1 + \tfrac{1}{5}, \\[2pt]
B_{1,0}(x) &= \tfrac{27}{100}x_0 - \tfrac{37}{100}x_1 + \tfrac{1}{2}, &
B_{1,1}(x) &= \tfrac{81}{100}x_0 - \tfrac{91}{100}x_1 + \tfrac{3}{2}.
\end{align*}

\subparagraph{Fully observable case ($\beta = I$).}
Each kernel $K_t$ factors into two linear pieces:
\begin{align*}
K_0(x) &= -\tfrac{1}{2500}\,(27 x_0 - 27 x_1 + 50)(32 x_0 - 27 x_1 - 10), \\
K_1(x) &= \phantom{-}\tfrac{1}{2500}\,(14 x_0 - 9 x_1 - 50)(27 x_0 - 27 x_1 + 50), \\
K_2(x) &= -\tfrac{1}{2500}\,(9 x_0 - 9 x_1 + 20)(81 x_0 - 91 x_1 + 150), \\
K_3(x) &= \phantom{-}\tfrac{1}{2500}\,(9 x_0 - 9 x_1 + 20)(27 x_0 - 37 x_1 + 50).
\end{align*}
Each linear factor coincides (up to a rational scalar) with one of the $Q_s$ or $B_{s,a}$ polynomials above; for example, $32 x_0 - 27 x_1 - 10 = 50\, B_{0,1}(x)$ and $27 x_0 - 37 x_1 + 50 = 100\, B_{1,0}(x)$. Together with the prefactor $Q_0 Q_1$ that was divided out, each sign polynomial $S_{B,t} = Q_0 Q_1 K_t$ is therefore a product of four linears, and the feasible component $S_B$ is cut out entirely by hyperplane sign conditions, producing the polytope on the left panel.

\subparagraph{Partially observable case ($\beta = \bigl(\begin{smallmatrix} 7/10 & 3/10 \\ 1/5 & 4/5 \end{smallmatrix}\bigr)$).}
The kernels become irreducible conics over $\mathbb{Q}$:
\begin{align*}
K_0(x) &= -\tfrac{1}{10000}\bigl( 945 x_0^2 - 1620 x_0 x_1 + 675 x_1^2 + 346 x_0 + 174 x_1 - 2600 \bigr), \\
K_1(x) &= \phantom{-}\tfrac{1}{10000}\bigl( 459 x_0^2 - 648 x_0 x_1 + 189 x_1^2 - 1634 x_0 + 2154 x_1 - 4600 \bigr), \\
K_2(x) &= -\tfrac{1}{10000}\bigl( 675 x_0^2 - 1530 x_0 x_1 + 855 x_1^2 - 3626 x_0 + 4006 x_1 - 4400 \bigr), \\
K_3(x) &= \phantom{-}\tfrac{1}{10000}\bigl( 189 x_0^2 - 558 x_0 x_1 + 369 x_1^2 + 1646 x_0 - 2026 x_1 + 2400 \bigr).
\end{align*}
The discriminant of the quadratic form $945 x_0^2 - 1620 x_0 x_1 + 675 x_1^2$ in $K_0$ is $1620^2 - 4 \cdot 945 \cdot 675 = 71{,}100 > 0$, so the conic $\{K_0 = 0\}$ is a hyperbola; the analogous check for $K_1, K_2, K_3$ gives positive discriminants in every case, so all four kernels are hyperbolas. Their arcs trace the curved portions of $\partial \mathcal{V}$ visible in the right panel of Figure~\ref{fig:factor-curves}.

\newpage
\section{Details on Experiments} 
\label{app:details-experi}

Code for generating the experiments described below can be obtained at: 

\texttt{https://github.com/ryan-a-anderson/pomdp-value-geometry}.

All experiments draw POMDP instances from the following distribution. Fix a triple $(S, A, O)$ and discount $\gamma = 0.9$. Sample:
 
\begin{itemize}
  \item \textbf{Transition kernels.} For each action $a \in \{0, \dots, A-1\}$
        and each state $s$, draw the row
        $\alpha_a(\cdot  |  s) \sim \mathrm{Dirichlet}(\mathbf{1}_S)$.
        This yields an $S \times S$ row-stochastic matrix per action, uniform
        over the simplex.
 
  \item \textbf{Observation kernel.} For each state $s$, draw
        $\beta(\cdot  |  s) \sim \mathrm{Dirichlet}(\mathbf{1}_O)$, yielding
        an $S \times O$ row-stochastic matrix.
 
  \item \textbf{Rewards.} For each action $a$ and state $s$, draw
        $r_a(s) \sim \mathrm{Uniform}[0, 10]$ independently.
 
  \item \textbf{Initial state distribution.} Unless otherwise noted, draw
        $\rho \sim \mathrm{Dirichlet}(\mathbf{1}_S)$.
\end{itemize}
 
All randomness is driven by a single \texttt{numpy.random.default\_rng} seeded to \texttt{42} for reproducibility.

Given a partially observable instance $\langle S, A, O, \alpha, \beta, r, \gamma \rangle$, we construct the fully observable baseline $\langle S, A, \alpha, r, \gamma \rangle$ by replacing the observation kernel with the identity (so that $O = S$ and $\beta = I_S$), while holding $\alpha$, $r$, $\gamma$, and $\rho$ fixed. This isolates the effect of partial observability on the optimization landscape, controlling for the particular random dynamics and rewards sampled for each instance.
 
Memoryless stochastic policies are parametrized by softmax on observation-conditioned logits. A logit matrix $L \in \mathbb{R}^{O \times A}$ induces the policy
\[
  \pi(a | o) \;=\; \frac{\exp(L_{o,a})}{\sum_{a'} \exp(L_{o,a'})}.
\]
For the fully observable baseline the same construction is used with $O = S$.
 
Given $\pi$, the state-to-state Markov chain under $\pi$ has transition matrix
\[
  P^\pi_{s,s'} \;=\; \sum_{a} \tau_{s,a} \, \alpha_a(s'  |  s),
  \qquad
  \tau_{s,a} \;=\; \sum_{o} \beta(o  |  s)\, \pi(a | o),
\]
and per-state expected reward $r^\pi_s = \sum_a \tau_{s,a}\, r_a(s)$. The value vector is obtained by directly solving the Bellman linear system
\[
  (I - \gamma P^\pi)\, V^\pi \;=\; r^\pi
\] 
with \texttt{numpy.linalg.solve}. The scalar objective is $J(\pi) = \rho^\top V^\pi$.

We use vanilla gradient ascent with a fixed learning rate. The update is
\[
  L \;\leftarrow\; L + \eta\, \nabla_L J(\pi_L),
\]
with learning rate $\eta = 0.005$ and $T = 3000$ steps. Logits are initialized as independent standard normals $L_{o,a} \sim \mathcal{N}(0,1)$. 

\subsection{Spread of Policy-Gradient Outcomes Under Partial vs.\ Full Observability}
\label{app:experiment-A}

We test the curved semi-algebraic characterization of the set of feasible value functions as follows: 
on partially observable instances, vanilla policy gradient from independent random initializations should converge to a
materially spread-out distribution of outcomes in both value and policy space, while on matched fully observable instances the same procedure should collapse to a single outcome up to numerical precision. 
 
For each configuration $(S, A, O)$ on the grid
\[
  (S, A, O) \;\in\; \{4, 8, 12\} \times \{2, 3, 4\} \times \{2, 3\},
\]
we sample random POMDP instances as described in~\S1.1. For each instance:
 
\begin{enumerate}
  \item Draw an initial state distribution
        $\rho \sim \mathrm{Dirichlet}(\mathbf{1}_S)$.
  \item Run $50$ independent optimization trajectories from i.i.d.\
        standard-normal logit initializations, each with $\eta = 0.005$ and
        $T = 3000$ steps.
  \item Record the terminal triple $(J_i, V_i, \pi_i)$ for $i = 1, \dots, 50$.
  \item Build the fully observable baseline 
  and repeat steps 1--3,
        \emph{with the same $\rho$}, on the baseline instance.
\end{enumerate}
 
For a given instance we then compute three aggregate statistics over the 50 restarts:
\begin{itemize}
  \item \textbf{Value spread} --- the range $\max_i J_i - \min_i J_i$.
  \item \textbf{Suboptimal fraction} --- the share of restarts $i$ for which
        $\max_j J_j - J_i > 0.01$.
  \item \textbf{Policy spread} --- the average pairwise $\ell_1$ distance
        \[
          \frac{1}{\binom{50}{2}} \sum_{i<j} \|\pi_i - \pi_j\|_1,
        \]
        where $\pi_i \in \mathbb{R}^{O \times A}$ is regarded as a flat
        vector. 
\end{itemize}
 
The per-instance statistics are then aggregated across instances within each configuration, reporting the mean and standard deviation of each. The table below reports aggregate statistics over instances for each configuration. 
\begin{table}[h]
  \caption{Aggregate statistics for Experiment~A across all 18 configurations.}
  \centering
  \small
  \renewcommand{\arraystretch}{1.15}
  \begin{tabular}{@{}lrrrrrrr@{}}
    \toprule
    & \multicolumn{2}{c}{Value Spread}
      & \multicolumn{2}{c}{Subopt.\ Frac.}
      & \multicolumn{2}{c}{Policy Spread} \\
    \cmidrule(lr){2-3}
    \cmidrule(lr){4-5}
    \cmidrule(lr){6-7}
    Config $(S,A,O)$ & Partial & Full & Partial & Full & Partial & Full \\
    \midrule
    $(4,2,2)$  & $18.783 \pm 11.845$ & $0.007 \pm 0.006$ & $0.263 \pm 0.216$ & $0.000 \pm 0.000$ & $0.317 \pm 0.145$ & $0.009 \pm 0.021$ \\
    $(4,2,3)$  & $17.062 \pm 16.408$ & $0.013 \pm 0.020$ & $0.103 \pm 0.205$ & $0.000 \pm 0.000$ & $0.117 \pm 0.194$ & $0.007 \pm 0.015$ \\
    $(4,3,2)$  & $15.672 \pm 20.881$ & $0.029 \pm 0.049$ & $0.223 \pm 0.264$ & $0.000 \pm 0.000$ & $0.192 \pm 0.175$ & $0.006 \pm 0.013$ \\
    $(4,3,3)$  & $23.090 \pm 8.862$  & $0.041 \pm 0.033$ & $0.287 \pm 0.218$ & $0.000 \pm 0.000$ & $0.279 \pm 0.167$ & $0.005 \pm 0.008$ \\
    $(4,4,2)$  & $5.704 \pm 9.200$   & $0.270 \pm 0.422$ & $0.160 \pm 0.228$ & $0.003 \pm 0.008$ & $0.131 \pm 0.168$ & $0.016 \pm 0.031$ \\
    $(4,4,3)$  & $24.628 \pm 18.339$ & $0.301 \pm 0.229$ & $0.180 \pm 0.219$ & $0.000 \pm 0.000$ & $0.144 \pm 0.140$ & $0.011 \pm 0.011$ \\
    $(8,2,2)$  & $26.695 \pm 4.932$  & $0.043 \pm 0.051$ & $0.220 \pm 0.145$ & $0.000 \pm 0.000$ & $0.314 \pm 0.153$ & $0.006 \pm 0.008$ \\
    $(8,2,3)$  & $18.780 \pm 14.591$ & $0.028 \pm 0.018$ & $0.120 \pm 0.125$ & $0.000 \pm 0.000$ & $0.184 \pm 0.173$ & $0.011 \pm 0.014$ \\
    $(8,3,2)$  & $19.638 \pm 11.285$ & $0.054 \pm 0.057$ & $0.313 \pm 0.182$ & $0.000 \pm 0.000$ & $0.312 \pm 0.088$ & $0.020 \pm 0.036$ \\
    $(8,3,3)$  & $25.778 \pm 14.546$ & $0.158 \pm 0.120$ & $0.220 \pm 0.192$ & $0.000 \pm 0.000$ & $0.233 \pm 0.122$ & $0.030 \pm 0.031$ \\
    $(8,4,2)$  & $25.460 \pm 18.234$ & $0.180 \pm 0.302$ & $0.270 \pm 0.200$ & $0.003 \pm 0.008$ & $0.237 \pm 0.128$ & $0.019 \pm 0.037$ \\
    $(8,4,3)$  & $10.616 \pm 7.909$  & $0.204 \pm 0.279$ & $0.293 \pm 0.220$ & $0.050 \pm 0.122$ & $0.240 \pm 0.148$ & $0.054 \pm 0.058$ \\
    $(12,2,2)$ & $15.348 \pm 8.547$  & $0.041 \pm 0.037$ & $0.197 \pm 0.159$ & $0.000 \pm 0.000$ & $0.279 \pm 0.155$ & $0.010 \pm 0.019$ \\
    $(12,2,3)$ & $14.260 \pm 5.207$  & $0.051 \pm 0.039$ & $0.243 \pm 0.166$ & $0.000 \pm 0.000$ & $0.328 \pm 0.185$ & $0.009 \pm 0.014$ \\
    $(12,3,2)$ & $19.506 \pm 11.628$ & $0.128 \pm 0.131$ & $0.293 \pm 0.187$ & $0.000 \pm 0.000$ & $0.297 \pm 0.117$ & $0.024 \pm 0.033$ \\
    $(12,3,3)$ & $19.336 \pm 9.808$  & $0.105 \pm 0.114$ & $0.223 \pm 0.180$ & $0.000 \pm 0.000$ & $0.243 \pm 0.127$ & $0.039 \pm 0.041$ \\
    $(12,4,2)$ & $13.337 \pm 9.849$  & $0.202 \pm 0.403$ & $0.367 \pm 0.177$ & $0.003 \pm 0.008$ & $0.298 \pm 0.082$ & $0.002 \pm 0.004$ \\
    $(12,4,3)$ & $18.199 \pm 6.600$  & $0.151 \pm 0.171$ & $0.267 \pm 0.149$ & $0.000 \pm 0.000$ & $0.254 \pm 0.119$ & $0.019 \pm 0.025$ \\
    \bottomrule
  \end{tabular}
  \label{tab:expA}
\end{table}
 
Across all 18 configurations, the partial-observability value spread exceeds the fully observable value spread by at least an order of magnitude (typically two), and the suboptimal fraction under partial observability is in the range $0.1$--$0.37$, compared with $0.00$--$0.05$ in the fully observable baseline.

The qualitative gap is consistent with the theoretical prediction. In the fully observable case the feasible value set is a finite union of polytopes (Dadashi
et al., 2019), and the only stationary points of a linear objective restricted to that set are vertices; vanilla gradient ascent with generic initializations
reaches one of those vertices with probability one and the distribution over restarts collapses. Under partial observability, Theorem~4.2 and Corollary~4.3 identify a semi-algebraic feasible set whose boundary carries curved (higher-degree) components.

\subsection{Counting Distinct Local Optima}
\label{app:experiment-B}

The experiment in Appendix \ref{app:experiment-A} documents that the distribution of outcomes under random restarts is spread out, but does not directly count distinct local optima. 
The present experiment is designed to estimate how many local optima are encountered in practice, and how this number scales with the size $S$ of the state space.

For each configuration on the grid $S \in \{8, 12, 16, 20, 24\}$, $A \in \{2, 3, 4\}$, $O \in \{2, 3\}$:
 
\begin{enumerate}
  \item Sample $n_{\text{inst}} = 20$ independent POMDP instances.
  
  \item For each instance, sample $n_\rho = 10$ independent initial state distributions $\rho \sim \mathrm{Dirichlet}(\mathbf{1}_S)$.
  
  \item For each (instance, $\rho$) pair, run $n_{\text{starts}} = 50$ independent optimization trajectories of length $T = 3000$ at $\eta = 0.005$ from i.i.d.\ standard-normal logit initializations.
  
  \item Cluster the $50$ terminal values $\{J_i\}$ with tolerance $\delta_J = 0.1$. The number of clusters is reported as the estimated number of distinct local optima for that (instance, $\rho$) pair.
\end{enumerate}
 
Pooling over $A$, $O$, instances, and $\rho$, for each state-space size $S$
we then report:
\begin{itemize}
  \item Mean number of optima, averaged over all (instance, $\rho$) pairs with that $S$.
  
  \item Share with more than one optimum, i.e.\ the fraction of pairs whose cluster count exceeds~$1$.
  
  \item Mean $\|V\|$-distance between best and worst cluster, conditional on there being more than one cluster.
\end{itemize}

To estimate the number of distinct local optima reached, the converged objective values $\{J_i\}_{i=1}^{N}$ across random restarts are clustered by a simple single-pass procedure sorted by descending $J$: a new cluster is opened whenever a run's $J$-value differs from every existing cluster mean by more than a tolerance $\delta_J$. For the reported runs $\delta_J = 0.1$. The suboptimal fraction is defined as the share of restarts \emph{not} in the cluster of highest mean $J$, and the reported ``$\|V\|$ distance'' is the Euclidean distance in $\mathbb{R}^S$ between representative value vectors from the best and worst cluster.
 
\begin{table}[h]
  \caption{Experiment~B results pooled by state-space size~$S$.}
  \centering
  \renewcommand{\arraystretch}{1.2}
  \begin{tabular}{@{}crcc@{}}
    \toprule
    $S$ & Mean \# optima & Share with ${>}1$ optimum & Mean $\|V\|$-distance (best--worst) \\
    \midrule
     8 & 1.82 & 0.64 & 10.07 \\
    12 & 2.05 & 0.75 &  9.78 \\
    16 & 2.25 & 0.79 & 10.92 \\
    20 & 2.67 & 0.88 & 11.53 \\
    24 & 2.80 & 0.86 & 11.87 \\
    \bottomrule
  \end{tabular}
  \label{tab:expB}
\end{table}
 
In the finer-grained table, $(8, 4, 3)$ already records a mean of $6.12$ optima per (instance, $\rho$) with a maximum of $8$; every instance in every configuration at $S \geq 8$, $A \geq 3$ exhibits at least two distinct optima.
 
Two features of the table are notable. First, both the frequency and the count of distinct optima grow monotonically with $S$: as the state space enlarges, the feasible value set acquires more boundary components capable of supporting isolated maximizers, exactly as the semi-algebraic characterization predicts. Second, the best-to-worst $\|V\|$-distance remains substantial across all $S$: the multi-optima phenomenon is not a numerical artifact, but reflects genuine separations of order $10$ in value space.
 
A caveat worth stating: the reported optima counts are \emph{lower bounds} on the true number of critical points of $J$ on the feasible set, because gradient ascent from $50$ random inits need not visit every basin and the clustering tolerance $\delta_J = 0.1$ coalesces nearby optima. The monotone increase with $S$ should therefore be read as a lower-bound trend.
 
\subsection{Finite-Memory Policies via Observation Enrichment}
\label{app:experiment-C}
 
We wanted to also consider whether the multi-optima phenomenon is an artifact of the memoryless policy class rather than a consequence of partial observability. 
The present experiment addresses this directly by running the Experiment~A protocol on policy classes with finite memory $k$, implemented via observation enrichment, and tracking how value spread varies with~$k$.
 
Following the perspective of \citet{azizzadenesheli2020policygradientpartiallyobservable}, a length-$k$ observation-history policy in the original POMDP is realized as a memoryless policy in an enriched POMDP whose observation space is the $k$-fold product of observation histories. Concretely, for memory length $k$ we define the enriched observation space $O^{(k)}$ and an enriched observation kernel $\beta^{(k)}$ such that memoryless policies $\pi \colon O^{(k)} \to \Delta^A$ correspond to length-$k$ policies in the original POMDP; the underlying state dynamics $\alpha$ and reward $r$ are unchanged.
 
Because the enriched policy class has dimension $|O^{(k)}| \cdot A$, which grows with $k$, the optimization budget is scaled with $k$ to control for optimization difficulty (as opposed to landscape difficulty).
 
Below, we run the experiment on the small grid $(S, A, O) \in \{(4,2,2),\,(4,3,2),\,(8,2,2),\,(8,3,2)\}$ for instances:
 
\begin{itemize}
  \item $k = 0$: memoryless policies (the original class).
  \item $k = 1$: length-$1$ observation-history policies.
  \item $k = 2$: length-$2$ observation-history policies.
  \item Full: the fully observable baseline
\end{itemize}
 
All four share the same underlying $\alpha$, $r$, $\gamma$, and $\rho$. 
\begin{table}[h]
  \caption{Experiment~C results for $k \in \{0,1\}$ (continued in Table~\ref{tab:expC-2}).}
  \centering
  \small
  \renewcommand{\arraystretch}{1.15}
  \begin{tabular}{@{}lrrrr@{}}
    \toprule
    Config $(S,A,O)$
      & \multicolumn{2}{c}{$k=0$}
      & \multicolumn{2}{c}{$k=1$} \\
    \cmidrule(lr){2-3}
    \cmidrule(lr){4-5}
    & Spread & Subopt.
      & Spread & Subopt. \\
    \midrule
    $(4,2,2)$ & $1.126 \pm 2.407$ & $0.315 \pm 0.416$ & $1.122 \pm 2.410$ & $0.110 \pm 0.232$ \\
    $(4,3,2)$ & $1.339 \pm 2.355$ & $0.030 \pm 0.054$ & $0.243 \pm 0.368$ & $0.215 \pm 0.427$ \\
    $(8,2,2)$ & $0.284 \pm 0.385$ & $0.205 \pm 0.303$ & $0.523 \pm 0.719$ & $0.260 \pm 0.369$ \\
    $(8,3,2)$ & $0.287 \pm 0.438$ & $0.255 \pm 0.423$ & $0.305 \pm 0.513$ & $0.275 \pm 0.418$ \\
    \bottomrule
  \end{tabular}
  
  \label{tab:expC-1}
\end{table}

\begin{table}[h]
  \caption{Experiment~C results for $k=2$ and the fully observable baseline (continued from Table~\ref{tab:expC-1}).}
  \centering
  \small
  \renewcommand{\arraystretch}{1.15}
  \begin{tabular}{@{}lrrrr@{}}
    \toprule
    Config $(S,A,O)$
      & \multicolumn{2}{c}{$k=2$}
      & \multicolumn{2}{c}{Full} \\
    \cmidrule(lr){2-3}
    \cmidrule(lr){4-5}
    & Spread & Subopt.
      & Spread & Subopt. \\
    \midrule
    $(4,2,2)$ & $0.111 \pm 0.239$ & $0.125 \pm 0.280$ & $0.021 \pm 0.022$ & $0.000 \pm 0.000$ \\
    $(4,3,2)$ & $0.292 \pm 0.409$ & $0.125 \pm 0.240$ & $0.055 \pm 0.089$ & $0.010 \pm 0.022$ \\
    $(8,2,2)$ & $0.545 \pm 0.819$ & $0.310 \pm 0.394$ & $0.031 \pm 0.016$ & $0.000 \pm 0.000$ \\
    $(8,3,2)$ & $0.138 \pm 0.202$ & $0.330 \pm 0.401$ & $0.113 \pm 0.036$ & $0.045 \pm 0.087$ \\
    \bottomrule
  \end{tabular}
  \label{tab:expC-2}
\end{table}

Three patterns are consistent across configurations:
 
\begin{enumerate}
  \item Increasing $k$ tends to reduce the value spread, at least for the smallest configurations where memory is enough to substantially disambiguate the state.
 
  \item Partial observability still induces significant spread even at $k = 2$: the spread for $k = 2$ is consistently larger than the spread in the fully observable setting, which is essentially zero.
 
  \item The suboptimal fraction does not cleanly shrink with $k$: on $(8,2,2)$ and $(8,3,2)$ it is non-monotone in $k$. This is consistent with the geometric reading: enlarging the policy class adds new feasible value functions and new critical points, which can shift the basin structure in ways that are not simply monotone in optimization-hardness indices.
\end{enumerate}
 
Enlarging memory mitigates, but does not erase, the geometric complexity of the feasible value set.

\end{document}